\newtheorem{theorem}{Theorem}[section]
\newtheorem{lemma}[theorem]{Lemma}
\newtheorem{proposition}[theorem]{Proposition}
\newtheorem{definition}[theorem]{Definition}
\newtheorem{corollary}[theorem]{Corollary}
\theoremstyle{definition}
\newtheorem{remark}[theorem]{Remark}
\newtheorem{example}[theorem]{Example}
\newtheorem{question}[theorem]{Question}
\newtheorem{construction}[theorem]{Construction}
\def\s{\sigma}
\def\Aut{\mathrm{Aut}}
\def\Z{\mathbb{Z}}
\title{The space of circular orderings and semiconjugacy}
\date{\today}
\begin{document}

\author[Idrissa Ba]{Idrissa Ba}
\address{Department of Mathematics\\
University of Manitoba \\
Winnipeg \\
MB Canada R3T 2N2} \email{ba162006@yahoo.fr}
\urladdr{ https://server.math.umanitoba.ca/~idrissa/}
\thanks{Idrissa Ba was partially supported by a Postdoctoral Fellowship at the University of Manitoba}

\author[Adam Clay]{Adam Clay}
\address{Department of Mathematics\\
University of Manitoba \\
Winnipeg \\
MB Canada R3T 2N2} \email{Adam.Clay@umanitoba.ca}
\urladdr{http://server.math.umanitoba.ca/~claya/} 
\thanks{Adam Clay was partially supported by NSERC grant RGPIN-2020-05343}
\maketitle

\begin{center}
\today
\end{center} 

\begin{abstract}
Work of Linnell shows that the space of left-orderings of a group is either finite or uncountable, and in the case that the space is finite, the isomorphism type of the group is known---it is what is known as a Tararin group.  By defining semiconjugacy of circular orderings in a general setting (that is, for arbitrary circular orderings of groups that may not act on $S^1$), we can view the subspace of left-orderings of any group as a single semiconjugacy class of circular orderings. Taking this perspective, we generalize the result of Linnell, to show that every semiconjugacy class of circular orderings is either finite or uncountable, and when a semiconjugacy class is finite, the group has a prescribed structure.  We also investigate the space of left-orderings as a subspace of the space of circular orderings, addressing a question of Baik and Samperton.
\end{abstract}

\maketitle

\section{Introduction}

A \textit{left-ordering} of a group $G$ is a strict, total ordering $<$ of $G$ such that $g<h$ implies $fg<fh$ for all $f, g, h \in G$.  If a group admits such an ordering, it is called \textit{left-orderable}, a pair $(G, <)$ is called a \textit{left-ordered group}.  
A {\it left-circular ordering} of a group $G$ is a map $c: G^3 \rightarrow \{ 0, \pm 1\}$ which is an orientation cocycle on triples of elements of $G$, see Section \ref{background} for details. We will often simply say ``circular ordering" for short.

For a fixed group $G$, the set of left-orderings of a group and the set of circular orderings of a group can both be topologized to produce compact topological spaces denoted $\mathrm{LO}(G)$ and $\mathrm{CO}(G)$ respectively.  As every left-ordering $<$ of a group corresponds uniquely to a certain ``degenerate" circular ordering, there is a natural embedding $\iota: \mathrm{LO}(G) \rightarrow \mathrm{CO}(G)$.

The groups for which $\mathrm{LO}(G)$ is finite are completely classified, they are known as the \textit{Tararin groups} $T_k$ and are described in Section \ref{background}.   Each group $T_k$ admits precisely $2^k$ left-orderings, with the structure of the orderings being well understood; they all arise lexicographically from a certain rational series $T_k \triangleright T_{k-1} \triangleright \dots \triangleright \{ id\}$ \cite{Ta}.  This result was the beginning of an investigation into other possible cardinalities of $\mathrm{LO}(G)$, with Zenkov later proving that $\mathrm{LO}(G)$ is either finite or uncountable whenever $G$ is locally indicable \cite{Ze}.  Linnell then improved this result via a clever application of compactness of $\mathrm{LO}(G)$, showing that no matter the group, $\mathrm{LO}(G)$ is either finite or uncountable \cite{Li}.

Following these results, an investigation of $\mathrm{CO}(G)$ showed that it behaves in a similar way:  It is either finite or uncountable no matter the group $G$ \cite{CMR}.  Moreover, when it is finite, $G$ must be a semidirect product $T_k \ltimes \mathbb{Z}/n \mathbb{Z}$ of a Tararin group and a finite cyclic group, with the cyclic group acting on the rational series quotients of $T_k$ in a specific way.

In this paper, we take the perspective that $\iota(\mathrm{LO}(G)) \subset \mathrm{CO}(G)$ is but a single semiconjugacy class of circular orderings, and view the result of Linnell as an analysis of the cardinality of this particular semiconjugacy class.  Tararin's result then gives a structure theorem, describing precisely the groups for which this particular semiconjugacy class in finite.  Our work generalizes the works of Linnell and Tararin, by showing that every semiconjugacy class of circular orderings of a group $G$ is either finite our uncountable, and by giving a structure theorem for the groups which admit finite semiconjugacy classes of circular orderings.  We prove:

\begin{theorem} \label{Thm2}
	\label{size of sets}
	Let $G$ be a circularly orderable group, and let $S \subset \mathrm{CO}(G)$ be a semiconjugacy class of circular orderings. Then either $S$ is uncountable, or $|S|=2^k$ and one of the following holds:
	\begin{enumerate}
	\item No circular ordering in $S$ is isolated in $\mathrm{CO}(G)$, and $G$ fits into a short exact sequence
	\[ 1 \rightarrow T_k \rightarrow G \rightarrow C \rightarrow 1
	\] 
	where $C$ is a nontrivial subgroup of $S^1$, $T_k$ is a Tararin group of rank $k$, and the map $G \rightarrow C$ is the rotation number map $\mathrm{rot}_c : G \rightarrow \mathbb{R} /\mathbb{Z}$ of one (and hence every) $c \in S$.  Moreover if $C$ is finite, then the action of $C$ on $T_k/T_{k-1}$ is trivial.
	\item All circular orderings in $S$ are isolated, $\mathrm{CO}(G)$ is finite, and thus $G \cong T_k \ltimes \mathbb{Z}/n\mathbb{Z}$ where $n$ is even and the action of $\mathbb{Z}/n\mathbb{Z}$ on $T_k/T_{k-1}$ is by inversion.
	\end{enumerate}
\end{theorem}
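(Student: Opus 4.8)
The plan is to reduce the analysis of the semiconjugacy class $S$ to the analysis of $\mathrm{LO}(N)$ for a well-chosen normal subgroup $N \trianglelefteq G$, so that the theorems of Linnell \cite{Li} and Tararin \cite{Ta} apply directly, and then to invoke the classification of groups with finite $\mathrm{CO}(G)$ from \cite{CMR} to distinguish the two cases.

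First I would fix $c \in S$ and study its rotation number map $\mathrm{rot}_c \colon G \to \mathbb{R}/\mathbb{Z}$ together with its linear part, the subgroup of elements on which $c$ restricts to a degenerate (linear) circular ordering. The main step — and the one I expect to be the principal obstacle — is an \emph{a priori} rigidity dichotomy: either $S$ is uncountable, or $\mathrm{rot}_c$ is a genuine homomorphism whose kernel $N$ is precisely the linear part of every member of $S$, so that $c'|_N \in \iota(\mathrm{LO}(N))$ for all $c' \in S$. The idea is that if the dynamics of $c$ are flexible — for instance if $\mathrm{rot}_c$ is not a homomorphism, or if $N$ acts with all rotation numbers zero but without a global fixed point — then $c$ admits uncountably many pairwise distinct semiconjugate perturbations, forcing $S$ to be uncountable. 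Making this precise requires the semiconjugacy machinery of the earlier sections together with a careful analysis of the (combinatorial) dynamical realization of $c$, modelled on the flexibility arguments already used for $\mathrm{LO}$ and $\mathrm{CO}$.

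Granting the dichotomy, set $C = \mathrm{rot}_c(G) \le S^1$ and $N = \ker(\mathrm{rot}_c)$. The next step is to show that restriction $c' \mapsto c'|_N$ is a bijection $S \to \iota(\mathrm{LO}(N))$: injectivity because, within a single semiconjugacy class, a circular ordering is determined by its linear part together with the rotation-number homomorphism (which is constant on $S$), and surjectivity by a lexicographic-type construction combining an arbitrary left-ordering of $N$ with the circular structure that $C$ inherits from $S^1$ to produce a circular ordering of $G$ lying in $S$. This is where most of the bookkeeping lives, since $N$ need not be complemented in $G$ and one must verify both the cocycle identity and membership in $S$. Once the bijection is established, $|S| = |\mathrm{LO}(N)|$, so by \cite{Li} either $S$ is uncountable and we are done, or $\mathrm{LO}(N)$ is finite, whence \cite{Ta} gives $N \cong T_k$ and $|\mathrm{LO}(N)| = 2^k$; thus $|S| = 2^k$ and $G$ sits in $1 \to T_k \to G \to C \to 1$ with quotient map $\mathrm{rot}_c$, which does not depend on the choice of $c \in S$.

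It remains to read off the finer structure according to whether the members of $S$ are isolated in $\mathrm{CO}(G)$. If $C$ is infinite, then perturbing a fixed $c \in S$ by arbitrarily small elements of $C$ produces distinct circular orderings of $G$, still lying in $S$, that converge to $c$; similarly, if $C$ is finite and acts trivially on $T_k/T_{k-1}$, then $G/T_{k-1}$ surjects onto $\mathbb{Z}$ and $\mathrm{CO}(G)$ contains a homeomorphic copy of $\mathrm{CO}(\mathbb{Z})$, hence is uncountable. In either case no member of $S$ is isolated, and we land in case (1). Otherwise $C$ is finite and acts on $T_k/T_{k-1} \cong \mathbb{Z}$ nontrivially, hence through the inversion automorphism, which forces $|C| = n$ even; here I would invoke \cite{CMR} to conclude that $\mathrm{CO}(G)$ is finite, that $G \cong T_k \ltimes \mathbb{Z}/n\mathbb{Z}$, and that every circular ordering of $G$ — in particular every member of $S$ — is isolated, which is case (2). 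The delicate points in this last paragraph are the perturbation arguments showing non-isolation; the heart of the proof remains the rigidity dichotomy and the bijection $S \cong \iota(\mathrm{LO}(N))$ of the first two steps.
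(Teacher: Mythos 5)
Your overall skeleton agrees with the paper's: identify the linear part $H=\ker(\mathrm{rot}_c)$ as a normal subgroup, put $S$ in bijection with $\mathrm{LO}(H)$ via lexicographic extensions over the short exact sequence, apply Linnell \cite{Li} and Tararin \cite{Ta}, and invoke \cite{CMR} for the isolated case. The bijection step is essentially the paper's argument (surjectivity is Proposition \ref{prop convex}, injectivity comes from $\mathrm{rot}_d=\mathrm{rot}_c$ for all $d\in S$ via Proposition \ref{semi classes}). However, your ``rigidity dichotomy'' --- the single step that carries the whole theorem --- is asserted rather than proved, and the mechanism you sketch for it (perturbing the dynamics of one fixed $c$ to produce uncountably many semiconjugate orderings) is not the one that works. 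The actual argument is Linnell-style: $S$ is compact and invariant under the conjugation action of $G$ on $\mathrm{CO}(G)$ (Proposition \ref{compact_invariant}), so it contains a minimal $G$-invariant subset $M$. If $M$ is infinite it has no isolated points (every orbit is dense), hence is a Cantor set and $S$ is uncountable. If $M$ is finite, some $c\in S$ has finite-index stabilizer under conjugation, and it is \cite[Corollaries 2.12--2.13]{CMR} that then deliver exactly your dichotomy: the linear part is normal, the quotient ordering is Archimedean, and $\mathrm{rot}_c$ is an order isomorphism onto a subgroup $C\leq S^1$. Without the conjugation action, compactness of $S$, and the minimal-set trick, there is no route to this conclusion in your outline.

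There are also two concrete problems in your final paragraph. First, when $C$ is infinite you claim the approximating orderings ``still lie in $S$''; this is impossible, since $S$ is finite (of size $2^k$) and closed, so infinitely many distinct elements of $S$ converging to $c$ cannot exist. The approximants must leave $S$: one perturbs the circular ordering of the quotient $C$ (whose space of circular orderings is a Cantor set by \cite[Theorem B]{CMR}) and uses continuity of the lexicographic extension map (Lemma \ref{cts extension}); the resulting orderings have different rotation numbers and so lie outside $S$, but they converge to $c$ in $\mathrm{CO}(G)$, which is all that non-isolation requires. Second, in the sub-case where $C$ is finite and acts trivially on $T_k/T_{k-1}$, showing that $\mathrm{CO}(G)$ is uncountable does not show that the elements of $S$ are non-isolated (uncountable spaces of orderings can contain isolated points), and moreover $G/T_{k-1}$ need not surject onto $\mathbb{Z}$ since $T_k/T_{k-1}$ may be, say, all of $\mathbb{Q}$. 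What is actually used is that $G/T_{k-1}$ is an infinite abelian group, so $\mathrm{CO}(G/T_{k-1})$ is a Cantor set, and again Lemma \ref{cts extension} produces approximants of each element of $S$ from perturbed orderings of this quotient.
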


Note that we place no conditions on the group $G$ (such as countability, or admitting an action on $\mathbb{R}$ or $S^1$).  To avoid imposing these additional hypotheses, we have to first generalize the usual definitions of semiconjugacy and rotation number appearing in the literature so that they are applicable to arbitrary circularly orderable groups (See \cite{Gh}, \cite{Man2} and \cite{KKM} for reviews of classical results). This necessity arises from the fact that the typical definitions apply only to actions on $\mathbb{R}$ or $S^1$, yet there are circularly orderable groups (and left-orderable groups) which do not act on either of these $1$-manifolds.  In fact, there are groups admitting left-orderings (resp. circular orderings) that do not arise from actions on $\mathbb{R}$ (resp. actions on $S^1$), despite the fact that these groups embed in $\mathrm{Homeo}_+(\mathbb{R})$ (resp. $\mathrm{Homeo}_+(S^1)$), see Section \ref{not natural} .  In particular, the situation is worse than not admitting a dynamical realization---the orders we construct cannot come from from \textit{any} action by homeomorphisms  on $\mathbb{R}$ or $S^1$ by following the ``standard recipe" (see Construction \ref{standard_construction}).

The partition of $\mathrm{CO}(G)$ into semiconjugacy classes, one of them being $\iota(\mathrm{LO}(G))$, also provides a new perspective in tackling a question of Baik and Samperton \cite[Question 2.9]{BS}.  In their work, they investigated the properties of $\iota(\mathrm{LO}(G))$ by defining the collection of \textit{genuine} circular orderings to be $\mathrm{CO}_g(G) = \mathrm{CO}(G) \setminus \iota(\mathrm{LO}(G))$.     They show that $\overline{\mathrm{CO}_g(\mathbb{Z}^n)} = \mathrm{CO}(\mathbb{Z}^n)$, and ask whether or not one can determine the closure $\overline{\mathrm{CO}_g(G)}$ of genuine circular orderings for a given group $G$.  We improve their result, and show:

\begin{theorem}
\label{main2}
For the following groups, $\mathrm{CO}_g(G)$ is dense in $\mathrm{CO}(G)$:
\begin{enumerate}
\item Finitely generated nilpotent groups.
\item Braid groups.
\item Fundamental groups of closed Seifert fibred manifolds with base orbifold $S^2(\alpha_1, \dots, \alpha_n)$.
\end{enumerate}
\end{theorem}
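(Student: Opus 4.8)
The goal is to prove, for each listed $G$, that $\iota(\mathrm{LO}(G))$ has empty interior in $\mathrm{CO}(G)$, which (as $\mathrm{CO}_g(G)=\mathrm{CO}(G)\setminus\iota(\mathrm{LO}(G))$) is exactly density of $\mathrm{CO}_g(G)$; there is nothing to prove unless $G$ is left-orderable, so assume it is. Unwinding the topology on $\mathrm{CO}(G)$, it suffices to show: given a left-ordering $<$ of $G$ and a finite $F\subseteq G$, there is a \emph{genuine} circular ordering $c$ of $G$ with $c|_{F^3}=\iota(<)|_{F^3}$.

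The uniform tool is a construction of circular orderings from extensions. If $1\to K\to G\xrightarrow{\phi}C\to1$ is exact, where $C$ is circularly ordered—a nontrivial subgroup of $\S$, a finite cyclic group with a chosen cyclic order, or more generally a circularly orderable quotient of $G$—and $<_K$ is a left-ordering of $K$, then comparing $K$-cosets through $\phi$ and breaking ties inside a coset by a transported copy of $<_K$ produces a circular ordering of $G$; left-invariance of the resulting orientation cocycle reduces to left-invariance of $<_K$ (when $C$ is the full quotient one needs $<_K$ to be conjugation-invariant, which is automatic when $K$ is central). This $c$ is genuine precisely when its rotation number homomorphism, in the sense of Theorem~\ref{Thm2}, is nontrivial—which happens whenever $\phi$ is a nontrivial map to a cyclic group, or onto a group with torsion, or onto a group acting on $\S$ with an element of nonzero rotation number—since linear orderings have identically zero rotation number; one can also see genuineness directly, as the cyclic ordering $c$ induces on a block such as $\{t^0,t^1,\dots,t^{2n-1}\}$ (a transversal element $t$ lifting a cyclic generator, $n\ge2$) is not $\iota$ of any linear ordering.

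For \emph{finitely generated nilpotent} $G$—necessarily torsion-free, hence with $G^{\mathrm{ab}}/\mathrm{tors}\cong\Z^{b_1}$ and $b_1\ge1$—one uses $C=\Z/n\Z$ via a composite $G\twoheadrightarrow\Z^{b_1}\xrightarrow{\psi}\Z\to\Z/n\Z$, so that every resulting $c$ is genuine; the content is to choose a primitive functional $\psi$, a modulus $n$, and (if needed) a cyclic shift, so that the cyclic ordering induced by $\psi\bmod n$ on the image of $F$ in $\Z^{b_1}$, refined inside each fibre by $<$, equals $\iota(<)|_{F^3}$. When $<|_F$ is ``blocked'' by the fibres of $G\twoheadrightarrow\Z^{b_1}$ this is a matter of realizing a prescribed \emph{cyclic} pattern on a finite subset of $\Z^{b_1}$ by a homomorphism to a cyclic group—always possible, and crucially without requiring $\psi$ to be monotone along $<$, so that ``wrapping'' around $\Z/n\Z$ is allowed; when it is not blocked, one first descends along a term of the lower central series and runs an induction on the Hirsch length, replacing $\Z/n\Z$ by a finite cyclic extension of a lower-dimensional quotient, to reduce to the blocked case. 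For \emph{braid groups} $B_m$ and the \emph{Seifert-fibred groups} $\pi_1(M)$, one uses instead a central cyclic subgroup $Z$ (generated by $\Delta^2$ for $B_m$, by a regular fibre for $\pi_1(M)$) whose quotient $G/Z$ has nontrivial torsion—$\Delta$ has order two modulo the centre of $B_m$ for $m\ge3$, and the orbifold group $\langle x_1,\dots,x_n\mid x_i^{\alpha_i},\ x_1\cdots x_n\rangle$ has torsion because some $\alpha_i\ge2$. Given $<$ and $F$, when $Z$ is cofinal in $<$ one conjugates the dynamical realization of $(G,<)$ so that $Z$ acts by translation by $N>\operatorname{diam}(F\cdot x_0)$, descends this action along the central $Z$ to $\S=\R/N\Z$—whose $F$-orbit does not wrap—and reads off a circular ordering (refined on $Z$ by $<|_Z$) that matches $\iota(<)$ on $F$ and is genuine because a finite-order element of $G/Z$ acts on $\S$ with nonzero rotation number.

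The chief obstacle in the last step is that $Z$ need not be cofinal in an arbitrary left-ordering of $G$; one gets around this by showing that the orderings in which $Z$ is cofinal are dense in $\mathrm{LO}(G)$—so $<$ may first be replaced by such an ordering agreeing with it on $F$—the main input being that $G/Z$ has no nontrivial left-orderable quotient (for $\pi_1(M)$ because the orbifold group is generated by elliptic elements; for $B_m$ because the relevant quotients are generated by torsion, or, as for $B_3/Z(B_3)\cong\mathrm{PSL}_2(\Z)$, have only trivial torsion-free quotients), so that no proper convex subgroup of $(G,<)$ can contain $Z$. Carrying out this reduction, executing the pattern-realization and Hirsch-length induction for nilpotent groups, and fixing a normalization of the generalized rotation number that renders all the genuineness assertions unambiguous, are where the real work of the proof lies.
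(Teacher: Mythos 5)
Your reduction (approximate every secret left-ordering on a finite set of triples by a genuine circular ordering) is the right one, and for braid groups and Seifert-fibred groups your strategy is essentially the paper's: identify a central cyclic subgroup $Z=\langle z\rangle$, show $z$ is cofinal, and produce genuine circular orderings near $c_<$ by wrapping the ordering around $\mathbb{R}/N\mathbb{Z}$ (the paper does this algebraically, via the quotient orderings on $H/\langle z^n\rangle$ in Theorem \ref{approx theorem} and Proposition \ref{genuine closure}, rather than through the dynamical realization). However, your treatment of cofinality is both misdirected and incorrectly justified. A proper convex subgroup of $(G,<)$ containing $Z$ need not be normal, so ``$G/Z$ has no nontrivial left-orderable quotient'' does not rule one out; the inference from that hypothesis to ``no proper convex subgroup contains $Z$'' is a non sequitur. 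The fact you actually need is simpler and makes the whole ``density of cofinal orderings'' detour unnecessary: if $C$ is a convex subgroup containing $Z$ and $g^m\in Z$ for some $m\geq 1$, then (taking $g>id$ without loss of generality) $id<g\leq g^m\in C$ forces $g\in C$; since $\pi_1(M)/\langle h\rangle$ and $B_n/\langle\Delta_n^2\rangle$ are generated by such torsion elements, $C=G$ and $z$ is cofinal in \emph{every} left-ordering. This is exactly the paper's argument, and it lets you work with the given ordering $<$ directly rather than perturbing it first.

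The serious gap is case (1). Your plan---choose a functional $\psi:G\twoheadrightarrow\mathbb{Z}^{b_1}\to\mathbb{Z}/n\mathbb{Z}$ realizing the cyclic pattern of $<$ on $F$, and in the ``non-blocked'' case run an induction on Hirsch length---is not carried out, and it is not clear it can be as stated. The order of two elements of $F$ lying in distinct fibres of $G\to\mathbb{Z}^{b_1}$ but in the same fibre of the Conrad quotient is decided by data invisible to any map through the abelianization, so no choice of $\psi$ and $n$ will reproduce $\iota(<)|_{F^3}$ in general without the inductive step, and ``replacing $\mathbb{Z}/n\mathbb{Z}$ by a finite cyclic extension of a lower-dimensional quotient'' is never defined. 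Your side remark that one needs the kernel ordering to be conjugation-invariant would, if it were true, sink this construction outright (the kernel of $G\to\mathbb{Z}/n\mathbb{Z}$ is not central and the restriction of $<$ to it is not conjugation-invariant); fortunately it is false---the lexicographic cocycle of Lemma \ref{cts extension} is left-invariant for any left-ordering of the normal kernel, because ties inside a coset are broken by comparing $g_1^{-1}g_2$ with $id$. The paper avoids all of this: every left-ordering of a finitely generated torsion-free nilpotent group is Conradian, hence lexicographic over a short exact sequence $1\to K\to G\to\mathbb{Z}^k\to 1$ in which $\mathbb{Z}^k$ carries an Archimedean ordering and therefore has a cofinal central element, so Proposition \ref{genuine closure} applies verbatim. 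You should either adopt that route or supply the missing combinatorial induction in full; also note that genuineness should be certified by Corollary \ref{LOG_zero} (nonvanishing of $\mathrm{rot}_c$ suffices, but $\mathrm{rot}_c$ is not a homomorphism in general and genuineness is \emph{not} equivalent to its nontriviality, since $\tau_c$ may be the obstruction instead).
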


These results follow from a general method for approximating certain kinds of left-orderings by genuine circular orderings, see Proposition \ref{genuine closure}.

\subsection{Organization}
The paper is organized as follows.  In Section \ref{background} we review notation and background on left-orderable and circularly orderable groups.  In Section \ref{not natural} we prove that there exist groups admitting left-orderings (resp. circular orderings) that do not arise from any action by homeomorphisms on $\mathbb{R}$ (resp. $\mathbb{S}^1$), supporting the necessity of a generalized notion of semiconjugacy.  In Section \ref{semiconjugacy_section} we develop semiconjugacy of circular orderings, as well as rotation and translation numbers in a general setting.  In Section \ref{main_proof} we prove Theorem  \ref{Thm2}.  Section \ref{genuine} addresses the problem of Baik and Samperton, and the limit set of genuine circular orderings, proving Theorem \ref{main2}.

\subsection{Acknowledgements} The authors would like to thank Crist\'{o}bal Rivas and Ty Ghaswala for their helpful discussions.

\section{Background and notation}
\label{background}

Every left-ordering $<$ of a group $G$ has a corresponding positive cone $P= \{ g \in G \mid g>id \}$, satisfying (i) $P \cdot P \subset P$ and (ii) $P \cup P^{-1} = G \setminus \{ id \}$.  Conversely, and subset $P$ of $G$ satisfying (i) and (ii) defines a left-ordering $<$ of $G$ according to the rule $g<h$ if and only if $g^{-1}h \in P$.

The set of all positive cones is denoted $\mathrm{LO}(G)$, and is considered as a subspace of the power set $\{0,1\}^G$, endowed with the product topology.  As such, a subbasis for the topology on $\mathrm{LO}(G)$ is the collection of sets 
\[ U_g = \{ P \in \mathrm{LO}(G) \mid g \in P\}
\]
where $g \in G \setminus \{id\}$.
It happens that $\mathrm{LO}(G)$ is closed in $\{0,1\}^G$, hence compact.  We will often implicitly identify each positive cone $P \in \mathrm{LO}(G)$ with its corresponding ordering $<$, and speak of $\mathrm{LO}(G)$ as being the space of all left-orderings of $G$.

Suppose $\phi : G \rightarrow H$ is an injective homomorphism between groups, and  $H$ is equipped with a left-ordering $<$.  Then we can define a left-ordering $<^{\phi}$ of $G$ according to the rule $g<^{\phi}h$ if and only if $\phi(g)< \phi(h)$ for all $g, h \in G$.  When $H=G$ and $\phi$ is an inner automorphism, say there exists $f \in G$ such that $\phi(g) = fgf^{-1} $ for all $g \in G$, then we denote $<^{\phi}$ by $<^f$.  Note that this gives a right $G$-action on the set of left-orderings $\mathrm{LO}(G)$, defined by $< \cdot f = <^f$ for all $f \in G$.   Correspondingly, if $P$ is the positive cone of a left-ordering $<$ of $G$, then $<^f$ has positive cone $f^{-1}Pf$.   This conjugation action on positive cones defines a action of $G$ by homeomorphisms of $\mathrm{LO}(G)$.

A \textit{left-circular ordering} is a function $c:G^3 \rightarrow \{0 , \pm1 \}$ satisfying:
	\begin{enumerate}
		\item If $(g_1, g_2, g_3) \in G^3$ then $c(g_1, g_2, g_3) = 0$ if and only if $\{g_1, g_2, g_3\}$ are not all distinct;
		\item For all $g_1, g_2, g_3, g_4 \in G$ the function $c$ satisfies 
		\[ c(g_1, g_2, g_3) - c(g_1, g_2, g_4) + c(g_1, g_3, g_4)-c(g_2, g_3, g_4) = 0;
		\]
		and
		\item For all $g, g_1, g_2, g_3 \in G$ we have 
		\[ c(g_1, g_2, g_3) = c(gg_1, gg_2, gg_3).
		\]
	\end{enumerate}
	If $G$ admits such a map, then $G$ is called {\it left-circularly orderable} or simply {\it circularly orderable}, and the pair $(G, c)$ will be called a {\it circularly ordered group}.

The set $\mathrm{CO}(G)$ of all such functions is a closed subspace of $\{0, \pm 1\}^{G^3}$, and as such it is compact.  The topology inherited by $\mathrm{CO}(G)$ has as subbasis the sets 
\[ U_{(g_1, g_2, g_3)}^i = \{ c \in \mathrm{CO}(G) \mid c(g_1, g_2, g_3) = i \},
\]
where $i \in \{0, \pm 1\}$ and $(g_1, g_2, g_3)$ ranges over all triples in $G^3$.  

Every left-ordering $<$ of a group $G$ defines a circular ordering $c_<$ according to the prescription $c_<(g_1, g_2,g_3) = \mathrm{sign}(\s)$ where $\s$ is the unique permutation such that $g_{\s(1)}<g_{\s(2)}<g_{\s(3)}$.  We call the circular orderings that arise in this way \textit{secret left-orderings}, and they are precisely the image of the natural embedding $\iota : \mathrm{LO}(G) \rightarrow \mathrm{CO}(G)$ given by $\iota(<) = c_<$ (injectivity follows from the fact that $P = \{ g\in G \mid c_<(g^{-1}, id, g) = 1 \}$ is the positive cone of the left-ordering that determines $c_<$, so that $c_{<_1} = c_{<_2}$ implies $<_1 = <_2$).  There is a natural right action of $G$ on $\mathrm{CO}(G)$ by homeomorphisms.  If $c$ is a circular ordering and $g \in G$, the circular ordering $c \cdot g$ is given by $(c \cdot g)(g_1, g_2, g_3) = c(g_1g^{-1}, g_2g^{-1}, g_3g^{-1})$ for all $ c\in \mathrm{CO}(G)$ and $g \in G$, whose restriction to $\iota(\mathrm{LO}(G))$ is the action on $\mathrm{LO}(G)$ already described above.

The cases where $\mathrm{LO}(G)$ and $\mathrm{CO}(G)$ are finite are well-understood.  The space $\mathrm{LO}(G)$ is finite if and only if $G$ admits a rational series 
\[ T_0= \{id\} \triangleleft T_1 \triangleleft \dots \triangleleft T_{k-1} \triangleleft T_k =G 
\]
whose quotients $T_i/T_{i-1}$ are rank one abelian, and such that $T_i/T_{i-2}$ is not bi-orderable for any $i = 2, \dots ,k$.  Such groups are called \textit{Tararin groups}, and the terms $T_i$ appearing in the rational series are all absolutely convex subgroups (i.e. convex in every left-ordering of $T_k)$, thus yielding precisely $2^k$ left-orderings on the Tararin group $T_k$ \cite{Ta}.  The space $\mathrm{CO}(G)$ is finite if and only if $G \cong T_k \ltimes \mathbb{Z}/n\mathbb{Z}$ where $T_k$ is a Tararin group, $n$ is even, and the action of the generator on $T_k/T_{k-1}$ is by inversion \cite{CMR}.

\section{Natural and unnatural orderings}
\label{not natural}

Before introducing natural and unnatural orderings of groups, we begin by recalling a standard way of constructing a left-ordering of a group of automorphisms of a totally ordered set,  and observe some of the basic structure of the resulting ordering.

\begin{construction}
\label{standard_construction}
Suppose that $(\Omega, <)$ is a totally ordered set, and let $\mathrm{Aut}(\Omega, <)$ denote the set of order-preserving automorphisms of $(\Omega, <)$. Fix a well-ordering $\prec$ of $\Omega$.  Define the positive cone $P$ of a left-ordering of $\mathrm{Aut}(\Omega, <)$ as follows:  Given $f \in \mathrm{Aut}(\Omega, <)$, set $x_{f} = \min_{\prec}\{ x \in \Omega \mid f(x) \neq x\}$.  Declare $f \in P$ if and only if $f(x_{f}) >x_{f}$.
\end{construction}

Note that the positive cone $P$ in the construction above depends on the choice of well-ordering $\prec$ of the set $\Omega$.  Nonetheless, we leave this dependency implicit and do not choose a notation such as $P_{\prec}$ or $P(\prec)$, as this notation is typically reserved in the literature to denote the positive cone of the left-invariant ordering $\prec$ of a left-ordered group.

\begin{proposition}
\label{convex subgroups}
Using the notation of Construction \ref{standard_construction}, suppose that $S \subset \Omega$ satisfies $x \in S$ and $y \prec x$ implies $y \in S$.  Then the stabilizer 
\[ C_S = \{ \phi \in \mathrm{Aut}(\Omega, <) \mid \phi(s) =s \mbox{ for all $s \in S$} \}
\]
 is convex in the left-ordering of $ \mathrm{Aut}(\Omega, <)$ determined by the positive cone $P$.
\end{proposition}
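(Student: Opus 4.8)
The plan is to invoke the standard reduction for convexity: a subgroup $C$ of a left-ordered group $(H,<)$ is convex if and only if every $h\in H$ with $id<h<g$ for some $g\in C$ with $g>id$ already lies in $C$. Indeed, given a general inequality $a\le x\le b$ with $a,b\in C$, one may assume $a<x<b$, left-translate by $a^{-1}$ (which preserves $<$, as $<$ is left-invariant), and observe that $a^{-1}b\in C$ with $a^{-1}b>id$, so the one-sided statement gives $a^{-1}x\in C$ and hence $x\in C$. Since $C_S$ is visibly a subgroup, being an intersection of point stabilizers in $\mathrm{Aut}(\Omega,<)$, it suffices to establish this one-sided property for $C_S$.

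The key combinatorial observation is a description of membership in $C_S$ via the well-ordering $\prec$: for $f\neq id$, with $x_f=\min_\prec\{x\in\Omega\mid f(x)\neq x\}$ as in Construction \ref{standard_construction}, one has $f\in C_S$ if and only if $x_f\notin S$. If $x_f\in S$, then $f$ moves the point $x_f\in S$, so $f\notin C_S$; conversely, if $x_f\notin S$, then every point $y$ moved by $f$ satisfies $y\succeq x_f$, and since $S$ is an initial segment of $\prec$ this forces $y\notin S$, so $f$ fixes $S$ pointwise.

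Now take $c\in C_S$ with $c>id$ (if $c=id$ there is nothing to check), put $b=x_c$, so that $b\notin S$ and $c(b)>b$, and let $h$ satisfy $id<h<c$. Suppose for contradiction that $h\notin C_S$; then $h\neq id$ and, writing $a=x_h$, we have $a\in S$, hence $a\neq b$ and in fact $a\prec b$ (were $b\prec a$, then $b\in S$). I would then locate $x_{h^{-1}c}$: for $x\prec a$ both $h$ and $c$ fix $x$ (the latter because $x\prec a\prec b=x_c$), so $h^{-1}c$ fixes $x$; and $c(a)=a$ since $a\prec b$, so $(h^{-1}c)(a)=h^{-1}(a)\neq a$ because $h$ moves $a$. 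Therefore $x_{h^{-1}c}=a$. Since $h<c$ means $h^{-1}c\in P$, the definition of $P$ gives $(h^{-1}c)(a)>a$, i.e. $h^{-1}(a)>a$, i.e. $a>h(a)$ upon applying the order-preserving map $h$. But $h>id$ means $h\in P$, so $h(a)=h(x_h)>x_h=a$, a contradiction. Hence $h\in C_S$, completing the proof.

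The only point requiring care --- rather than any genuine difficulty --- is keeping the two orders apart: the order $<$ on $\Omega$ that governs positivity of automorphisms, versus the auxiliary well-order $\prec$ that selects the least moved point, together with the step where the inequality $a\prec b$ is used to compute $x_{h^{-1}c}$. Everything else is routine manipulation of point stabilizers and the definition of the positive cone $P$.
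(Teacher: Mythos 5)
Your proof is correct and follows essentially the same route as the paper's: both reduce to the one-sided convexity condition $id < h < c$ with $c \in C_S$, show that $x_{h^{-1}c} = x_h$ when $h \notin C_S$, and derive a contradiction between positivity of $h$ (forcing $h(x_h) > x_h$) and positivity of $h^{-1}c$ (forcing $h^{-1}(x_h) > x_h$). Your explicit characterization of membership in $C_S$ via $x_f \notin S$ and the explicit justification of the one-sided reduction are just more detailed versions of steps the paper leaves implicit.
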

\begin{proof}
Let $f\in  \mathrm{Aut}(\Omega, <)$ and $g\in C_S$ such that $1<_Pf<_Pg$, where $<_P$ is the left-ordering defined by the positive cone $P$. We have that $x_f< f(x_f)$ and $x_{f^{-1}g}<f^{-1}g(x_{f^{-1}g})$. If $f\in C_S$ then we have nothing to prove. Assume that $f\notin C_S$. Then there exists $s \in S$ such that $f(s) \neq s$, and since $S$ is downward closed this implies $x_f\in S$. Since $g\in C_S$, $f^{-1}g(s)=f^{-1}(s)$ for any $s\in S$. So, if $s \prec x_f$ then $f^{-1}g(s)=s$ which implies that $x_f=x_{f^{-1}g}$. We have that $f^{-1}g(x_f)=f^{-1}(x_f)<x_f$ which is a contradiction to the fact that $x_{f^{-1}g}<f^{-1}g(x_{f^{-1}g})$.
\end{proof}

\begin{definition}
Call a left-ordering of $\mathrm{Homeo}_+(\mathbb{R})$ \emph{natural} if it arises from an application of Construction \ref{standard_construction}.  Given a group $G$, call a left-ordering $<$ of $G$ \emph{natural} if there exists an injective representation $\rho:G \rightarrow \mathrm{Homeo}_+(\mathbb{R})$ and a natural ordering $\prec$ of $\mathrm{Homeo}_+(\mathbb{R})$ such that $< = \prec^{\rho}$.  If a left-ordering of $G$ is not natural, it is called \emph{unnatural}.
\end{definition}

It is a standard result that if $G$ is countable, then every left-ordering $<$ of $G$ is natural.  Indeed, the dynamical realization of the ordering provides an injective representation $\rho :G \rightarrow  \mathrm{Homeo}_+(\mathbb{R})$ such that $g<h$ if and only if $\rho(g)(0) < \rho(h)(0)$.  As such, an application of Construction \ref{standard_construction} using a well-ordering of $\mathbb{R}$ for which $0$ is the smallest element will give a natural ordering $\prec$ of $ \mathrm{Homeo}_+(\mathbb{R})$ such that $\prec^\rho =<$. 

On the other hand, it is easy to construct left-orderable groups having no natural orderings whatsoever.  For example, consider a torsion-free abelian group $A$ of cardinality larger than the cardinality of $\mathrm{Homeo}_+(\mathbb{R})$.  The group $A$ is left-orderable since it is torsion-free abelian, but evidently no orderings are natural since an injective representation $\rho :A \rightarrow  \mathrm{Homeo}_+(\mathbb{R})$ cannot exist for cardinality reasons.  There are, however, groups that admit no action on $\mathbb{R}$ for reasons other than cardinality. 

For example, the group generated by $\{ a_s \mid s \in \mathbb{R} \}$ with relations 
\[ a_s a_r a_s^{-1} = a_r^{-1} \mbox{ whenever $r<s$}
\] 
is left-orderable, yet admits no action on $\mathbb{R}$ by homeomorphisms, and thus no natural orderings---yet it has cardinality $2^{\aleph_0}$ (see \cite[Example 2.2.4]{DNR} and \cite[Proposition 5.2]{Man1}, this second paper also has many other great examples).

In contrast to these results, we will show that the free group on $2^{\aleph_0}$ generators admits both natural and unnatural orderings. 

\begin{proposition}
\label{not_natural}
Let $F(X)$ denote the free group with free generating set $X = \{x_r\}_{r \in \mathbb{R}}$.  For every $S \subset \mathbb{R}$ satisfying 
\[ x \in S \mbox{ and } y<x \mbox{ implies } y \in S \hspace{1em} (*)
\]
let $X_S = \{ x_r \}_{r \in S}$ and let $F(X_S)$ denote the subgroup of $F(X)$ generated by $X_S$.  Then $F(X)$ admits a left-ordering relative to which $F(X_S)$ is convex for every $S \subset \mathbb{R}$ satisfying $(*)$.
\end{proposition}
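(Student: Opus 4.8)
The plan is not to build the ordering by hand, but to assemble it by a compactness argument from orderings that handle finitely many of the subgroups $F(X_S)$ at a time.

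Write $Y$ for the set of all $S\subseteq\mathbb{R}$ satisfying $(*)$, and for $S\in Y$ let $\mathcal{C}_S=\{<\ \in\mathrm{LO}(F(X))\mid F(X_S)\text{ is convex for }<\}$. Each $\mathcal{C}_S$ is closed in $\mathrm{LO}(F(X))$: its complement consists of the orderings for which some $g\notin F(X_S)$ lies strictly between two elements of $F(X_S)$, and in the subbasic notation of Section \ref{background} this is the open set $\bigcup\{U_{h^{-1}g}\cap U_{g^{-1}h'}\mid g\in F(X)\setminus F(X_S),\ h,h'\in F(X_S)\}$. A point of $\bigcap_{S\in Y}\mathcal{C}_S$ is exactly a left-ordering of the kind sought, so since $\mathrm{LO}(F(X))$ is compact and nonempty (free groups are left-orderable) it is enough to verify the finite intersection property: $\bigcap_{S\in\mathcal{F}}\mathcal{C}_S\neq\emptyset$ for every finite $\mathcal{F}\subseteq Y$.

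To see this, note first that $Y$ is totally ordered by inclusion: if $x\in S\setminus S'$ with $S,S'\in Y$, then every $y\in S'$ satisfies $y<x$, hence $y\in S$, so $S'\subseteq S$. Thus $\mathcal{F}$ may be written as a chain $S_1\subsetneq\cdots\subsetneq S_m$, and after discarding $\emptyset$ and $\mathbb{R}$ (which contribute the trivial and the total subgroup, convex in every ordering) we may assume $S_1\neq\emptyset$ and $S_m\neq\mathbb{R}$. Since $X$ is a free basis, $F(X_{S_{j+1}})=F(X_{S_j})\ast F(X_{S_{j+1}\setminus S_j})$ and $F(X)=F(X_{S_m})\ast F(X_{\mathbb{R}\setminus S_m})$ are free product decompositions with nontrivial factors. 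I now use the fact that if $H$ and $K$ are left-orderable and $<_H$ is a left-ordering of $H$, then $H\ast K$ carries a left-ordering whose restriction to $H$ is $<_H$ and in which $H$ is convex. Starting from any ordering of $F(X_{S_1})$ and applying this along $F(X_{S_1})\hookrightarrow F(X_{S_2})\hookrightarrow\cdots\hookrightarrow F(X_{S_m})\hookrightarrow F(X)$ yields a left-ordering of $F(X)$ in which $F(X_{S_m})$ is convex and, by induction using that a convex subgroup of a convex subgroup (with compatible orderings) is convex, each $F(X_{S_j})$ is convex in $F(X)$. This ordering lies in $\bigcap_{S\in\mathcal{F}}\mathcal{C}_S$, as required.

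The content of the argument is the quoted fact that a free product $H\ast K$ of left-orderable groups admits an ordering extending a given one on $H$ with $H$ convex (for the application, $H$ and $K$ are free), and carrying this out is the main obstacle. This is where Construction \ref{standard_construction} enters: one realizes $H\ast K$ acting by order-automorphisms on a totally ordered set built as a ``tree of ordered lines'' over the Bass--Serre tree of $H\ast K$ — each vertex replaced by a copy of the ordered set on which the corresponding factor acts, chosen so that the action on the distinguished line realizes $<_H$ — arranged so that a point $v$ has stabilizer exactly $H$, is fixed by $H$, and every other line is fixed pointwise by $H$. Running Construction \ref{standard_construction} with $v$ the least element of a well-ordering of this set then gives an ordering of $H\ast K$ restricting to $<_H$ on $H$ and, by Proposition \ref{convex subgroups} applied to the downward-closed set $\{v\}$, with $H=\mathrm{Stab}(v)$ convex; getting this tree-of-lines construction right for uncountable factors is the delicate point. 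Finally it is worth noting that the resulting chain of convex subgroups $\{F(X_S)\}_{S\in Y}$ is order-isomorphic to the family of downward-closed subsets of $\mathbb{R}$, hence far from well-ordered — exactly the phenomenon absent in the finite/Tararin setting, and the reason the example is of interest.
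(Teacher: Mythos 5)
Your argument is correct, but it takes a genuinely different route from the paper. The paper never invokes compactness: it cites \cite[Corollary 20]{ADS} to obtain, for each downward-closed $S$, a single positive cone $P_S$ making $F(X_S)$ convex, and then writes down one explicit positive cone $P$ by sorting each reduced word $y$ according to the largest index $r_y$ appearing in it and testing $y$ against $P_{S_y}$ where $S_y=(-\infty,r_y)$. You instead observe that each convexity condition cuts out a closed subset $\mathcal{C}_S$ of the compact space $\mathrm{LO}(F(X))$, reduce to the finite intersection property, note that the family of downward-closed sets is a chain, and handle a finite chain $S_1\subsetneq\cdots\subsetneq S_m$ by iteratively extending an ordering along the free-product decompositions $F(X_{S_{j+1}})=F(X_{S_j})\ast F(X_{S_{j+1}\setminus S_j})$, using that a convex subgroup of a convex subgroup is convex. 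Both proofs ultimately rest on the same external input --- free factors are left relatively convex --- but your soft argument only ever has to satisfy finitely many constraints at once, which sidesteps the verification (left implicit in the paper as ``straightforward to check'') that the glued $P$ is multiplicatively closed and that convexity of every $F(X_S)$ holds simultaneously; the price is that you need the stronger, ``prescribed restriction'' form of relative convexity at each stage of the chain, and you lose the explicit description of the ordering. The one place your write-up falls short of a complete proof is the tree-of-ordered-lines justification of the key lemma: as stated it is a sketch, and the delicacy you flag for uncountable factors is real. But this lemma is exactly what the paper imports from \cite[Corollary 20]{ADS} (left relative convexity of free factors, i.e.\ an invariant order on the coset space $G/H$, from which the lexicographic extension of any given $<_H$ is immediate), so you should simply cite it rather than re-derive it; with that substitution your proof is complete.
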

\begin{proof}
By Sunic \cite[Corollary 20]{ADS}), every $F(X_S)$ is relatively convex since free factors are relatively convex.  As such, for each $S \subset \mathbb{R}$ satisfying $(*)$, we may fix a choice of positive cone $P_S \subset F(X)$ corresponding to a left-ordering of $F(X)$ that has $F(X_S)$ as a convex subgroup.  Define $P \subset F(X)$ as follows.

Given $y \in F(X)$, if $y = \prod_{i=1}^n x_{r_i}$ is a reduced word, then set $r_y = \max\{r_i \}$.  Set $S_y = \{ x \in \mathbb{R} \mid x < r_y\}$ and declare $y \in P$ if and only if $y \in P_{S_y}$.  It is straightforward to check that $P$ defines the positive cone of a left-ordering relative to which $F(X_S)$ is convex for every $S \subset \mathbb{R}$ satisfying $(*)$.
\end{proof}

\begin{proposition}
The free group $F(X)$ admits both natural and unnatural orderings.
\end{proposition}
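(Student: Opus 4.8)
The plan is to treat the natural and the unnatural case separately.

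\emph{Natural orderings.} It suffices to produce an injective homomorphism $\rho\colon F(X)\to\mathrm{Homeo}_+(\mathbb R)$: Construction \ref{standard_construction} applied to any well-ordering of $\mathbb R$ then yields a natural ordering $\prec$ of $\mathrm{Homeo}_+(\mathbb R)$, and $\prec^{\rho}$ is a natural ordering of $F(X)$. Since $|X|=2^{\aleph_0}$, producing $\rho$ is the same as exhibiting a free subgroup of $\mathrm{Homeo}_+(\mathbb R)$ of rank $2^{\aleph_0}$. I would invoke the classical fact that $\mathrm{PSL}_2(\mathbb R)$ contains a free subgroup of rank $2^{\aleph_0}$ (obtained, for instance, by evaluating a suitable one-parameter family of matrices at $2^{\aleph_0}$ algebraically independent transcendentals, so that nontriviality of a word becomes a single identity in commuting indeterminates that one defeats by specialization), and then transport it into $\mathrm{Homeo}_+(\mathbb R)$: the boundary action realizes it in $\mathrm{Homeo}_+(S^1)$, and because free groups have cohomological dimension one, $H^2(F(X);\mathbb Z)=0$, so the pullback of the central extension $\mathbb Z\to\widetilde{\mathrm{Homeo}_+(S^1)}\to\mathrm{Homeo}_+(S^1)$ splits and the action lifts to an injective homomorphism into $\widetilde{\mathrm{Homeo}_+(S^1)}\subset\mathrm{Homeo}_+(\mathbb R)$. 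Composing with an isomorphism $F(X)\cong F$ gives $\rho$.

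\emph{Unnatural orderings.} I would show that the left-ordering $<_P$ of $F(X)$ constructed in Proposition \ref{not_natural} is unnatural. Suppose it were natural, say $<_P\,=\,\prec^{\rho}$ for an injective $\rho\colon F(X)\to\mathrm{Homeo}_+(\mathbb R)$ and $\prec$ coming from Construction \ref{standard_construction} with a well-ordering $\lhd$ of $\mathbb R$; identify $F(X)$ with $\rho(F(X))\le\mathrm{Homeo}_+(\mathbb R)$. Two ingredients are needed: (a) the standard fact that the convex subgroups of a subgroup $H$ of a left-ordered group, with the induced ordering, are exactly the sets $H\cap C$ for $C$ a convex subgroup of the ambient group; and (b) that the convex subgroups of a Construction-ordering of $\mathrm{Homeo}_+(\mathbb R)$ are exactly the stabilizers $C_{I_\alpha}$ of the $\lhd$-initial segments $I_\alpha$, so that they are indexed by ordinals in an order-reversing way. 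Granting (a) and (b), the map sending a convex subgroup $K$ of $(F(X),<_P)$ to the least $\alpha$ with $F(X)\cap C_{I_\alpha}=K$ is a well-defined order-reversing (hence strictly order-reversing) map into the ordinals. Restricting it to the convex subgroups $F(X_{(-\infty,r)})$, $r\in\mathbb R$, furnished by Proposition \ref{not_natural}, and using that $r<r'$ forces $F(X_{(-\infty,r)})\subsetneq F(X_{(-\infty,r')})$ (the bases differ by a nonempty set), we would obtain a strictly order-reversing injection of $(\mathbb R,<)$ into the ordinals — impossible, since the reverse of $\mathbb R$ contains a copy of the dense order $(\mathbb Q,<)$, which does not order-embed into any well-ordered set. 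This contradiction proves $<_P$ unnatural, and together with the first part the proposition follows.

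\emph{Main obstacle.} The crux is ingredient (b): identifying the entire lattice of convex subgroups of a Construction-ordering of $\mathrm{Homeo}_+(\mathbb R)$, of which Proposition \ref{convex subgroups} supplies only one inclusion. The argument I would give is that any convex subgroup $C$ is squeezed as $C_{I_{\alpha+1}}\subseteq C\subseteq C_{I_\alpha}$, where $\alpha$ is the $\lhd$-rank of the least point moved by any element of $C$; if some element of $C$ moves that point then, by the convexity trick already used in Proposition \ref{convex subgroups}, $C$ contains the pointwise stabilizer of $I_{\alpha+1}$, and since the stabilizer of a point in $\mathrm{Homeo}_+(\mathbb R)$ acts transitively on each complementary ray, $C$ must then contain all of $C_{I_\alpha}$, forcing $C=C_{I_\alpha}$; otherwise $C=C_{I_{\alpha+1}}$. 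Handling this bookkeeping correctly — in particular the degenerate cases where fixing $I_\alpha$ already forces fixing the next point, so that no convex subgroup of that level exists — is where the real work lies.
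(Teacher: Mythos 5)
Your first half is fine: producing any injective homomorphism $F(X)\to\mathrm{Homeo}_+(\mathbb R)$ does yield natural orderings, and your route through a rank-$2^{\aleph_0}$ free subgroup of $\mathrm{PSL}_2(\mathbb R)$ lifted to $\widetilde{\mathrm{Homeo}_+}(S^1)$ works (for a free group you do not even need the cohomological vanishing --- lift each generator arbitrarily and use the universal property). The paper simply cites Blass--Kister for the embedding, so this part is a correct, if more elaborate, variant.

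The unnatural half has a genuine gap, and it is exactly your ingredient (a). Only one direction of (a) is true: if $C$ is convex in $G$ then $H\cap C$ is convex in $H$, but a convex subgroup of $(H,<|_H)$ need \emph{not} be the trace of a convex subgroup of $G$ (the convex hull of a subgroup is generally not a subgroup in a merely left-invariant ordering, so there is no candidate for $C$). Here is a decisive reason your argument cannot be repaired: nowhere do you use that $|X|=2^{\aleph_0}$. Run the identical argument on the countable free group $F(\{x_q\}_{q\in\mathbb Q})$, equipped with the ordering obtained by restricting the ordering of Proposition \ref{not_natural} (or by repeating its construction over $\mathbb Q$); this ordering again has a densely ordered chain of convex subgroups $F(X_S)$. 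Your argument would conclude that this ordering is unnatural --- but every left-ordering of a countable group is natural, via the dynamical realization and Construction \ref{standard_construction}, as noted in Section \ref{not natural}. Since your ingredient (b) is essentially correct (the convex subgroups of a Construction-ordering of $\mathrm{Homeo}_+(\mathbb R)$ do form a reverse well-ordered chain of stabilizers of initial segments, modulo the bookkeeping you flag), the contradiction shows that (a) is false, and indeed false for the very convex subgroups $F(X_S)$ you need: a densely ordered chain of convex subgroups of a subgroup can coexist with a reverse well-ordered chain upstairs, because most members of the chain downstairs are simply not traces.

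The paper's proof avoids this entirely: it uses only the \emph{single} convex subgroup supplied by Proposition \ref{convex subgroups}, namely the stabilizer $C$ in $F(X)$ of the $\lhd$-least moved point $x_0$, places it inside some convex $D=F(X_S)$, and then argues dynamically that the interval $I$ swept out by the $D$-orbit of $x_0$ satisfies $\rho(g)(I)\cap\rho(h)(I)=\emptyset$ for distinct cosets $gD\neq hD$. The uncountably many cosets $x_rD$, $r\notin S$, then produce uncountably many pairwise disjoint open intervals in $\mathbb R$, which is absurd. Note that this is where the cardinality of $X$ enters, and it must enter somewhere: the statement is simply false for countable free generating sets.
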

\begin{proof}
We first show that the ordering $<$ of $F(X)$ constructed in Proposition \ref{not_natural} is not natural.  To this end, suppose that $\rho:F(X) \rightarrow \mathrm{Homeo}_+(\mathbb{R})$ is an injective representation and that $\prec$ is a natural ordering of $\mathrm{Homeo}_+(\mathbb{R})$ such that $< = \prec^{\rho}$. Set 
\[x_0 = \min \{ x \in \mathbb{R} \mid \exists g \in F(X) \mbox{ such that } \rho(g)(x_0) \neq x_0\},
\] 
where the minimum is taken with respect to the well-ordering of $\mathbb{R}$ used to define the natural ordering $\prec$ of $\mathrm{Homeo}_+(\mathbb{R})$.
Let $C \subset F(X)$ denote the stabilizer of $x_0$, that is $C = \{ g \in F(X) \mid \rho(g)(x_0) = x_0 \}$, and note that by Proposition \ref{convex subgroups}, $C$ must be convex relative to the ordering $<$ of $F(X)$.

Since convex subgroups are ordered by inclusion, and there is no greatest convex subgroup of $F(X)$ with respect to the ordering $<$ of the form $F(X_S)$, we may choose a subset $S \subset \mathbb{R}$ such that $C \subset F(X_S)$.  For ease of notation, we will write $D$ in place of the convex subgroup $F(X_S)$.

Set 
\[ I = \{ x \in \mathbb{R} \mid \exists d, d' \in D \mbox{ such that } \rho(d)(x_0) <x < \rho(d')(x_0) \},
\]
and note that $I$ is an open interval, since  
\[ I = \bigcup_{\substack{d, d' \in D \\ d<d'}} (\rho(d)(x_0), \rho(d')(x_0)).
\]

Next, suppose that $g \notin D$, we check that $\rho(g)(I) \cap I =\emptyset$.  Assume that $g>1$,  and observe that if $gd<1$, then $d<g^{-1}<1$ implies $g \in D$ by convexity, so that we must in fact have $gd>1$ for all $d \in D$. We use this fact in what follows.

First, note it suffices to prove $\rho(gd)(x_0) > \rho(d')(x_0)$ for all $d, d' \in D$.  So suppose the contrary, say $\rho(gd)(x_0) \leq \rho(d')(x_0)$.  If $\rho(gd)(x_0) = \rho(d')(x_0)$ then $\rho((d')^{-1}gd)(x_0) = x_0$ so that $(d')^{-1}gd \in C \subset D$, a contradiction since $g \notin D$.  If $\rho(gd)(x_0) < \rho(d')(x_0)$, then since $gd>1$ we know $x_0< \rho(gd)(x_0) < \rho(d')(x_0)$, so that $1< gd <d'$ and hence $gd \in D$ by convexity, again contradicting $g \notin D$.  

We conclude that $\rho(g)(I) \cap \rho(h)(I) =\emptyset$ whenever $gD$ and $hD$ are distinct cosets.  Recalling that $D =F(X_S)$, the collection $\{x_r\}_{r \notin S}$ is an uncountable collection of distinct coset representatives, and thus $\{ \rho(x_r)(I) \}_{r \notin S}$ is an uncountable collection of disjoint open intervals in $\mathbb{R}$, which is absurd.  Thus the ordering $<$ of $F(X)$ is unnatural.

On the other hand, $F(X)$ may be embedded as a subgroup of $\mathrm{Homeo}_+(\mathbb{R})$ \cite[Theorem 1]{BK}. So it admits plenty of natural orderings.
\end{proof}

The same result holds for groups admitting circular orderings and actions on $S^1$. We can analogously define natural and unnatural circular orderings of a group $G$, and arrive at:

\begin{proposition}
There exists a group $G$ admitting both natural and unnatural circular orderings.
\end{proposition}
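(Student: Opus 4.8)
The plan is to bootstrap the free-group construction of Proposition \ref{not_natural} (together with the argument showing it produces an unnatural ordering) up to the circular setting via a standard central extension. Recall that circular orderings of a group $G$ correspond to left-orderings of central extensions: if $(G,c)$ is circularly ordered, one builds a left-orderable central extension $1 \to \Z \to \widetilde{G} \to G \to 1$ whose left-ordering restricts to a left-ordering of the kernel $\Z$ in the ``standard'' way, and conversely a left-ordering of such a $\widetilde{G}$ in which the central $\Z$ is convex and cofinal descends to a circular ordering of the quotient $G$. So the first step is to take the group $G = F(X)$ from Proposition \ref{not_natural} and form $\widetilde{G} = F(X) \times \Z$ (or any nonsplit central extension—the direct product suffices here), equipped with the lexicographic left-ordering that places the central $\Z$ as the smallest convex subgroup, ordered compatibly with the unnatural ordering $<$ on the $F(X)$ factor.

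Next I would identify the induced circular ordering $c$ on $F(X) \cong \widetilde{G}/\Z$. For the ``natural'' direction: since $F(X)$ embeds in $\mathrm{Homeo}_+(\R)$ (by \cite[Theorem 1]{BK}), the product $F(X)\times\Z$ embeds in $\widetilde{\mathrm{Homeo}}_+(S^1)$, so $F(X)$ acts on $S^1$ and hence admits natural circular orderings by the $S^1$-analogue of Construction \ref{standard_construction}; this is the easy half and I would state it in a sentence, mirroring the last line of the preceding proof. For the ``unnatural'' direction, the key step is to show that if $c$ were natural—i.e. arose from an injective $\rho : F(X) \to \mathrm{Homeo}_+(S^1)$ via the $S^1$-version of the standard recipe—then lifting to the universal cover would furnish an action of a central extension of $F(X)$ on $\R$, and the convex-stabilizer structure of that action (via the $S^1$-analogue of Proposition \ref{convex subgroups}) would force the same uncountable-family-of-disjoint-intervals contradiction as in the proof of the free-group case. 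Concretely: a natural circular ordering of $F(X)$ pulls back to a natural \emph{linear} ordering of the central extension $\widetilde{G}$ acting on $\R$, and the convex subgroups $F(X_S)$ (pulled back to $\widetilde{G}$) must be realized as stabilizers of downward-closed sets, producing uncountably many disjoint open intervals in $\R$ indexed by cosets of $F(X_S)$, which is absurd.

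The main obstacle I anticipate is making the translation between ``natural circular ordering of $G$'' and ``natural linear ordering of a central extension $\widetilde G$ acting on $\R$'' airtight: one must check that the $S^1$-version of Construction \ref{standard_construction} (choosing a well-ordering of $S^1$ and using first points of non-fixed behavior) lifts, under passage to the universal cover $\R \to S^1$, to something comparable to the $\R$-version of Construction \ref{standard_construction}, at least closely enough that an $S^1$-analogue of Proposition \ref{convex subgroups} holds and the downward-closed subsets $S\subset\R$ defining the $F(X_S)$ get realized as stabilizers. A clean way to sidestep subtleties about the circle's lack of a natural basepoint is to note that one only needs \emph{one} unnatural circular ordering, so it is enough to carefully construct the circular ordering $c$ on $F(X)$ directly (as the orientation cocycle of the lexicographic ordering on $\widetilde G$), and then argue by contradiction exactly as before: a natural presentation of $c$ yields an action of $\widetilde G$ on $\R$ restricting correctly on the central $\Z$, the relatively convex subgroups $F(X_S)$ lift to relatively convex subgroups of $\widetilde G$, and Proposition \ref{convex subgroups} (applied to the $\R$-action) delivers the uncountable disjoint interval family. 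I would present the argument in this streamlined form, reusing verbatim the combinatorial core of the previous proof.
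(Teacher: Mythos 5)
Your overall strategy---leverage the unnatural left-ordering of $F(X)$ from Proposition \ref{not_natural}---is the right instinct, but the paper takes a much shorter route than the one you propose, and your route as written has two genuine problems. The paper does not work with $G=F(X)$ at all: it takes $G = F(X)\times H$ for a circularly orderable $H$ (e.g.\ $H=\Z/n\Z$), equips it with a lexicographic circular ordering in which $F(X)$ is a convex subgroup carrying the unnatural left-ordering, and observes that a natural circular ordering of $G$ would restrict to a natural left-ordering on this convex subgroup---contradiction; the natural direction is immediate since $F(X)\times\Z/n\Z$ embeds in $\mathrm{Homeo}_+(S^1)$. In other words, the choice of group is engineered so that the circular problem reduces in one line to the linear case already settled.

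The first problem with your version is internal to the central-extension setup: you ask for the lexicographic ordering of $\widetilde{G}=F(X)\times\Z$ to make the central $\Z$ the \emph{smallest convex subgroup}, but to descend a left-ordering of $\widetilde{G}$ to a circular ordering of $\widetilde{G}/\Z$ (Constructions \ref{lift} and \ref{quotient}) the central generator must be \emph{cofinal}, which is the opposite convexity position. Either way, chasing through the construction, the circular ordering you land on for $F(X)$ is just the secret left-ordering $c_<=\iota(<)$ (it has $\mathrm{rot}\equiv 0$ and $\tau\equiv 0$), so the central extension buys you nothing---you could have applied $\iota$ directly. The second and more serious problem is that the heart of your argument---that naturality of $c_<$ as a \emph{circular} ordering of $F(X)$ would reproduce the uncountable-disjoint-intervals contradiction---is only a sketch, and the repair you propose (lifting the putative $S^1$-action to the universal cover) does not obviously go through: the $S^1$-analogue of Construction \ref{standard_construction} is built from a well-ordering of $S^1$ and first-moved-point data, and there is no canonical way to lift that data to a well-ordering of $\R$ so that the $\R$-version of Proposition \ref{convex subgroups} applies. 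You would instead need to prove an $S^1$-analogue of Proposition \ref{convex subgroups} from scratch and re-establish the order-versus-dynamics dictionary used in the free-group proof. All of this is avoidable by the paper's device of padding the group to $F(X)\times\Z/n\Z$ so that $F(X)$ becomes a convex subgroup of the circular ordering, on which the restriction is genuinely linear.
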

\begin{proof}
If $H$ is a circularly orderable group, any lexicographic circular ordering of $G = F(X) \times H$ is unnatural if we equip $F(X)$ with an unnatural left-ordering.  Yet there are choices of $H$, such as $H = \mathbb{Z}/n\mathbb{Z}$ for any $n \geq 2$, for which the product $F(X) \times H$ clearly embeds in $\mathrm{Homeo}_+(S^1)$. 
\end{proof}

Crist\'{o}bal Rivas also pointed out to the authors in private communication that there are bi-orderings of $\mathrm{PL}_+([0,1])$ that behave in a similar manner, owing to the fact that they admit $2^{\aleph_0}$ convex jumps.  These bi-orderings arise, roughly, by ordering the elements of $\mathrm{PL}_+([0,1])$ according to their derivatives on the leftmost (or rightmost) interval on which they are not the identity.  This is therefore another example of a group admitting both natural and unnatural orderings.

\section{Generalized semiconjugacy}
\label{semiconjugacy_section}

Traditional notions of semiconjugacy relate strictly to actions on $\mathbb{R}$ or on $S^1$, from which one can study left or circular orderings via semiconjugacy of their dynamical realizations. 

This means that for a countable group $G$, whose circular and left-orderings all arise from particularly nice actions on $S^1$ and $\mathbb{R}$, the notion of semiconjugacy appearing in the literature is adequate to analyze $\mathrm{LO}(G)$ (or $\mathrm{CO}(G)$) and its equivalence classes of orderings up to semiconjugacy.  More generally, if all circular and left-orderings of a group $G$ are natural, then semiconjugacy of actions on $S^1$ or $\mathbb{R}$ is potentially an adequate notion of semiconjugacy for a study of the orderings of $G$.  At the other extreme, for a circularly orderable or left-orderable group with no actions on $S^1$ or $\mathbb{R}$ whatsoever, it would be reasonable to conclude that traditional semiconjugacy of actions is not a relevant tool for an analysis of the group's orderings.

However, the existence of groups admitting both natural and unnatural orderings points to a need for a generalized notion of semiconjugacy in order to give a satisfactory analysis of the space of orderings of an arbitrary group.  The definitions and proofs mirror the classical ones, and proceed more or less as expected.   There are, however, a few hiccups along the way that require a bit of care (e.g. see Example \ref{not_sequential}).

\subsection{Semiconjugacy of left and circular orderings}
Throughout this section, whenever $(S, <)$ is an ordered set we denote the Dedekind--MacNeille completion of $S$ by $D(S, <)$, which we will think of as being constructed from cuts $(U, V)$.  The natural ordering of $D(S, <)$ will also be denoted by $<$ whenever no confusion arises from doing so. When there is no risk of confusion, we will write $D(S)$ in place of $D(S, <)$ and the image of $s \in S$ under the natural inclusion $S \hookrightarrow D(S)$ will be denoted simply by $s$.

The group of order-preserving automorphisms of a totally ordered set $(S, <)$ will be denoted by $\Aut(S, <)$.  Whenever there is a representation $\rho : G \rightarrow \Aut(S, <)$, we will denote the unique extension by $\overline{\rho} : G \rightarrow \Aut(D(S), <)$.

  Recall that if $(G, <)$ is a left-ordered group, then a subset $X \subset G$ is \textit{cofinal} if for every $g \in G$ there exists $x, y \in X$ such that $y < g < x$.  We say that an element $g \in G$ is cofinal if the cyclic subgroup $\langle g \rangle$ is cofinal.  We recall the following standard construction.

\begin{construction}
\label{lift}
Let $(G,c)$ be a circularly ordered group.  Let $\widetilde{G}_c$ denote the central extension 
of $G$ by $\Z$ constructed from the set $G\times \Z$ which we equip with a multiplication given by
$(g,n)(h,m)=(gh,n+m+f_c(g,h))$, where
$$ f_c(g,h)=\left\{\begin{array}{cl} 0 & \text{if $g=id$ or $h=id$ or $c( id, g, gh)=1$}
\\ 1 &  \text{if $gh=id$ $(g\not=id)$ or $c(id,gh,g)=1$.  }  \end{array}
\right.$$ 
Define the positive cone of a left-ordering $<_c$ by $$P=\{(g,n)\mid n\geq 0\} \setminus \{ (id, 0) \}.$$  The central element $z_c = (id, 1)$ is positive and cofinal with respect to $<_c$.
\end{construction}

\begin{definition}
\label{circ_semiconjugacy}
Let $c$, $d$ be circular orderings of a group $G$.  
We say that $c$ is \emph{semiconjugate} to $d$ if there exist a nondecreasing map $f: D(\widetilde{G}_c) \rightarrow D(\widetilde{G}_d)$ $($i.e. $x \leq_c y$ implies $f(x) \leq_d f(y))$ and an equivalence of extensions $\phi: \widetilde{G}_c \rightarrow \widetilde{G}_d$ such that $f(h(\alpha)) = \phi(h)(f(\alpha))$ for all $\alpha \in D(\widetilde{G}_c)$ and $h \in \widetilde{G}_c$.
\end{definition}

\begin{remark} First a remark on notation.  When we write $h(\alpha)$, we are implicitly identifying an element $h \in \widetilde{G}_c$ with the order-preserving automorphism of $D(\widetilde{G}_c)$ that arises as the unique extension of left-multiplication by $h$ on $\widetilde{G}_c$.  Similarly for $\phi(h)(f(\alpha))$. Second, if we let $z_c \in \widetilde{G}_c$ and $z_d \in \widetilde{G}_d$ denote the central, cofinal positive elements arising from Construction \ref{lift}, then $\phi$ satisfies $\phi(z_c) = z_d$.  This definition therefore captures the notion of $f$ being a monotone map (i.e. satisfying $f(z_c(\alpha)) = z_d f(\alpha)$, analogous to the traditional definition of semiconjugacy) by setting $h = z_c$ in $f(h(\alpha)) = \phi(h)(f(\alpha))$.
\end{remark}

\begin{proposition}
Semiconjugacy of circularly ordered groups is an equivalence relation.
\end{proposition}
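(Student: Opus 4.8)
The plan is to verify the three axioms of an equivalence relation directly from Definition \ref{circ_semiconjugacy}, exploiting the fact that semiconjugacy is built from two pieces of data---a nondecreasing map $f$ on Dedekind--MacNeille completions and an equivalence of central extensions $\phi$---and that each of these classes of maps is closed under identity, inverse (when available), and composition.

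For \emph{reflexivity}, given a circular ordering $c$ of $G$, take $f = \mathrm{id}_{D(\widetilde{G}_c)}$ and $\phi = \mathrm{id}_{\widetilde{G}_c}$; the identity is trivially nondecreasing, the identity is an equivalence of extensions, and the intertwining relation $f(h(\alpha)) = \phi(h)(f(\alpha))$ holds tautologically. For \emph{symmetry}, suppose $c$ is semiconjugate to $d$ via $(f, \phi)$. The map $\phi$ is an equivalence of extensions, hence an isomorphism, so $\phi^{-1}: \widetilde{G}_d \to \widetilde{G}_c$ is again an equivalence of extensions. The subtlety is that $f: D(\widetilde{G}_c) \to D(\widetilde{G}_d)$ need not be a bijection, so one cannot simply invert it. Instead I would observe that the relation $f(h(\alpha)) = \phi(h)(f(\alpha))$ says precisely that $f$ is an $\widetilde{G}_c$-equivariant monotone map, and then appeal to the structure of these lifts: because $z_c$ is cofinal and $f(z_c(\alpha)) = z_d f(\alpha)$, the map $f$ is ``monotone of degree one'' in the classical sense, and such maps admit a monotone pseudo-inverse $g: D(\widetilde{G}_d) \to D(\widetilde{G}_c)$, defined by $g(\beta) = \sup\{\alpha \mid f(\alpha) \leq_d \beta\}$ (the supremum existing in the completion). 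One checks that $g$ is nondecreasing and that $g(\phi(h)(\beta)) = h(g(\beta))$, i.e. $g(\psi(h')(\beta)) = \phi^{-1}(h')(g(\beta))$ for $h' \in \widetilde{G}_d$ with $\psi = \phi^{-1}$, so that $(g, \phi^{-1})$ exhibits $d$ as semiconjugate to $c$.

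For \emph{transitivity}, suppose $c$ is semiconjugate to $d$ via $(f_1, \phi_1)$ and $d$ is semiconjugate to $e$ via $(f_2, \phi_2)$. Then $f_2 \circ f_1 : D(\widetilde{G}_c) \to D(\widetilde{G}_e)$ is nondecreasing as a composite of nondecreasing maps, $\phi_2 \circ \phi_1 : \widetilde{G}_c \to \widetilde{G}_e$ is an equivalence of extensions as a composite of such, and for all $\alpha$ and $h \in \widetilde{G}_c$ we compute $(f_2 \circ f_1)(h(\alpha)) = f_2(\phi_1(h)(f_1(\alpha))) = \phi_2(\phi_1(h))(f_2(f_1(\alpha))) = (\phi_2 \circ \phi_1)(h)((f_2 \circ f_1)(\alpha))$, so $(f_2 \circ f_1, \phi_2 \circ \phi_1)$ witnesses that $c$ is semiconjugate to $e$.

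The main obstacle is symmetry, specifically the construction and verification of the monotone pseudo-inverse $g$ of $f$. One must check that the defining supremum lands in $D(\widetilde{G}_d)$ rather than running off to $\pm\infty$---this is where cofinality of $z_c$ (equivalently $z_d$) and the degree-one property $f(z_c \alpha) = z_d f(\alpha)$ are essential, since they force $f$ to be ``coarsely surjective''---and that $g$ genuinely intertwines the actions via $\phi^{-1}$, which requires the order-equivariance of $f$ together with the fact that each $h \in \widetilde{G}_c$ acts by an order automorphism that commutes with suprema. I would isolate this pseudo-inverse construction as a small lemma (or recall it from the earlier development of monotone maps in this section) and then assemble the three verifications above. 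Reflexivity and transitivity are genuinely routine; it is worth stating them briefly for completeness but the bulk of the argument, and the only place any care is needed, is the passage from $f$ to its monotone pseudo-inverse.
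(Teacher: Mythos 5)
Your proposal is correct and follows essentially the same route as the paper: reflexivity and transitivity via identities and composition, and symmetry via the monotone pseudo-inverse $f'(\beta)=\sup_{\leq_c}\{\alpha\in D(\widetilde{G}_c)\mid f(\alpha)\leq_d\beta\}$ intertwined with $\phi^{-1}$, which is exactly the construction the paper uses (following Mann). Your additional remark about verifying that the supremum is bounded using cofinality of $z_c$ and the degree-one property is a point the paper leaves implicit, so it is a welcome bit of extra care rather than a deviation.
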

\begin{proof}
That semiconjugacy is a reflexive relation follows from the definition, as we can take $f$ and $\phi$ in the definition above to be the identity.  Transitivity follows just as easily, by noting that the composition of equivalences of extensions yields an equivalence of extensions, and the that the composition of nondecreasing maps yields a nondecreasing map.

For symmetry, suppose that $c$ and $d$ are semiconjugate circular orderings of $G$, and that $f$, $\phi$ are as in Definition \ref{circ_semiconjugacy}.  Our approach is standard, we follow \cite{Man2} and include details for the sake of completeness.  Define $f' : D(\widetilde{G}_d) \rightarrow D(\widetilde{G}_c)$ by
\[ f'(\beta) = \sup_{\leq_c} \{ \alpha \in D(\widetilde{G}_c) \mid f(\alpha) \leq_d \beta \}.
\]
  
From the definition, one can verify that $f$ is nondecreasing.  

We will check that $f'h(\beta) = \phi^{-1}(h) f'(\beta)$ for all $\beta \in D(\widetilde{G}_d)$ and $h \in \widetilde{G}_d$.  We begin with
\[ f' h(\beta) = \sup_{\leq_c} \{  \alpha \in D(\widetilde{G}_c) \mid f(\alpha) \leq_d h(\beta) \} = \sup_{\leq_c} \{  \phi^{-1}(h)(\alpha) \in D(\widetilde{G}_c) \mid f( \phi^{-1}(h)(\alpha)) \leq_d h(\beta) \}.
\]
Now note that $ f(\phi^{-1}(h)(\alpha))  = \phi(\phi^{-1}(h))f(\alpha) = hf(\alpha)$, so $f( \phi^{-1}(h)(\alpha)) \leq_d h(\beta)$ is equivalent to $hf(\alpha) \leq_d h(\beta)$, or $f(\alpha) \leq_d \beta$ since $h$ is order-preserving.  Thus 
\[ f' h(\beta) = \sup_{\leq_c} \{  \phi^{-1}(h)(\alpha) \in D(\widetilde{G}_c) \mid f(\alpha) \leq_d \beta \} = \phi^{-1}(h) f'(\beta).
\]
\end{proof}

\subsection{Rotation and translation numbers}

One can define rotation numbers and translation numbers in the expected ways.  First, some notation:  If $(G, c)$ is a circularly ordered group and $g \in \widetilde{G}_c$, then $[g]_c$ will be used to denote the unique integer such that $z_c^{[g]_c} \leq_c g <_c z_c^{[g]_c+1}$. 

\begin{definition}
\label{rot def}
Suppose that $G$ is a group, and let $c$ be a circular ordering of $G$.  For each $h \in \widetilde{G}_c$, define 
\[ \widetilde{\mathrm{rot}}_c(h) = \lim_{n \to \infty} \frac{[h^n]_c}{n}.
\]
Given $g \in G$, call $\widetilde{g} \in \widetilde{G}_c$ a lift of $g$ if it maps to $g$ under the natural quotient, that is, $\widetilde{g}\langle z_c \rangle = g$.  

For every $g \in G$, define the rotation number of $g$ with respect to the circular ordering $c$ to be $\mathrm{rot}_c(g) = \widetilde{\mathrm{rot}}_c(\widetilde{g}) \mod \mathbb{Z}$, where $\widetilde{g}$ is any lift of $g$.  For any two elements $g, h \in G$, choose lifts $\widetilde{g}, \widetilde{h} \in \widetilde{G}$ and define
\[ \tau_c(g, h) = \widetilde{\mathrm{rot}}_c(\widetilde{g}\widetilde{h}) - \widetilde{\mathrm{rot}}_c(\widetilde{g}) - \widetilde{\mathrm{rot}}_c(\widetilde{h}).
\]
\end{definition}

The limit in the previous definition always exists by an application of Fekete's Lemma, and one can verify that both $\mathrm{rot}_c(g)$ and $\tau_c(g,h)$ are independent of choices of lifts.  Moreover, one can show they are invariant under conjugation by elements of $G$---that is, the following proposition holds.

\begin{proposition}
\label{conj_inv}
Suppose that $G$ is a group and $c$ is a circular ordering of $G$.  The following properties hold for all $f,g,h \in G$:
\begin{enumerate}
\item $\mathrm{rot}_c(g) = \mathrm{rot}_c(hgh^{-1})$
\item $\tau_c(g,h) = \tau_c(fgf^{-1}, fhf^{-1})$.
\end{enumerate} 
\end{proposition}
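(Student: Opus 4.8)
The plan is to deduce both statements from a single fact about the central extension: the function $\widetilde{\mathrm{rot}}_c : \widetilde{G}_c \to \mathbb{R}$ is a homogeneous quasimorphism, hence constant on conjugacy classes of $\widetilde{G}_c$. Granting this, each property follows by choosing lifts adapted to conjugation, using that the quotient $q : \widetilde{G}_c \to G$ is a homomorphism, together with the already-noted independence of $\mathrm{rot}_c$ and $\tau_c$ from the chosen lifts.

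First I would establish the key estimate $|[ab]_c - [a]_c - [b]_c| \le 1$ for all $a,b \in \widetilde{G}_c$. By the definition of $[\cdot]_c$ we may write $a = z_c^{[a]_c} a_0$ and $b = z_c^{[b]_c} b_0$ with $\mathrm{id} \le_c a_0, b_0 <_c z_c$; since $z_c$ is central, $ab = z_c^{[a]_c + [b]_c}(a_0 b_0)$, and left-invariance of $<_c$ gives $\mathrm{id} \le_c a_0 b_0 <_c z_c^2$, so that $[ab]_c - [a]_c - [b]_c = [a_0 b_0]_c \in \{0,1\}$. (Equivalently, on $\widetilde{G}_c = G \times \mathbb{Z}$ one checks directly that $[(g,n)]_c = n$, so the defect of $[\cdot]_c$ is literally the cocycle value $f_c(g,h) \in \{0,1\}$ of Construction \ref{lift}.) From the limit definition, $\widetilde{\mathrm{rot}}_c$ is the homogenization of the quasimorphism $[\cdot]_c$: it satisfies $\widetilde{\mathrm{rot}}_c(h^k) = k\,\widetilde{\mathrm{rot}}_c(h)$ for all $k \in \mathbb{Z}$ (the case $k<0$ following from $[h^n h^{-n}]_c = [\mathrm{id}]_c = 0$ and the estimate above), and the standard fact that the homogenization of a quasimorphism is again a quasimorphism shows $\widetilde{\mathrm{rot}}_c$ is a homogeneous quasimorphism with some finite defect $D$.

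Next I would run the classical argument that a homogeneous quasimorphism is a class function. For $a,b \in \widetilde{G}_c$, applying the defect bound twice to the product $a \cdot b^n \cdot a^{-1} = (aba^{-1})^n$ and using $\widetilde{\mathrm{rot}}_c(a^{-1}) = -\widetilde{\mathrm{rot}}_c(a)$, $\widetilde{\mathrm{rot}}_c(b^n) = n\,\widetilde{\mathrm{rot}}_c(b)$, and $\widetilde{\mathrm{rot}}_c((aba^{-1})^n) = n\,\widetilde{\mathrm{rot}}_c(aba^{-1})$ gives $|n\,\widetilde{\mathrm{rot}}_c(aba^{-1}) - n\,\widetilde{\mathrm{rot}}_c(b)| \le 2D$; dividing by $n$ and letting $n \to \infty$ yields $\widetilde{\mathrm{rot}}_c(aba^{-1}) = \widetilde{\mathrm{rot}}_c(b)$. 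To conclude, fix lifts $\widetilde f, \widetilde g, \widetilde h \in \widetilde{G}_c$ of $f,g,h$. Because $q$ is a homomorphism, $\widetilde h \widetilde g \widetilde h^{-1}$ is a lift of $hgh^{-1}$, so $\mathrm{rot}_c(hgh^{-1}) = \widetilde{\mathrm{rot}}_c(\widetilde h \widetilde g \widetilde h^{-1}) \bmod \mathbb{Z} = \widetilde{\mathrm{rot}}_c(\widetilde g) \bmod \mathbb{Z} = \mathrm{rot}_c(g)$, which is (1). Likewise $\widetilde f \widetilde g \widetilde f^{-1}$, $\widetilde f \widetilde h \widetilde f^{-1}$ and $\widetilde f \widetilde g \widetilde h \widetilde f^{-1}$ are lifts of $fgf^{-1}$, $fhf^{-1}$ and their product $fghf^{-1}$; substituting these into the (lift-independent) formula for $\tau_c$ and applying the class-function property to each of the three terms gives $\tau_c(fgf^{-1}, fhf^{-1}) = \widetilde{\mathrm{rot}}_c(\widetilde g \widetilde h) - \widetilde{\mathrm{rot}}_c(\widetilde g) - \widetilde{\mathrm{rot}}_c(\widetilde h) = \tau_c(g,h)$, which is (2).

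The only real obstacle is the defect estimate in the first step: since $<_c$ is only left-invariant, right multiplication is in general not order-preserving, so the comparison has to be funnelled through the centrality of $z_c$ (or, more concretely, read off directly from the cocycle $f_c$). Everything downstream is routine bounded-cohomology bookkeeping.
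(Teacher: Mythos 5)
Your proof is correct. The paper actually states Proposition \ref{conj_inv} without proof (it only says ``one can show they are invariant under conjugation''), and your argument --- the defect bound $|[ab]_c-[a]_c-[b]_c|\le 1$ obtained via centrality of $z_c$, so that $\widetilde{\mathrm{rot}}_c$ is a homogeneous quasimorphism on $\widetilde{G}_c$ and hence a class function, followed by pushing conjugation-invariance down through compatible choices of lifts --- is precisely the standard argument the authors are implicitly invoking, with the one genuinely delicate point (left-invariance alone does not control right multiplication) correctly identified and handled.
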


These quantities are enough to characterize semiconjugacy of circular orderings.  The following is a standard result due to Matsumoto \cite{Mat}, though we follow the approach of \cite{Man2} in order to circumvent bounded cohomology and keep this exposition self-contained.

\begin{proposition}
Let $G$ be a group.  Two circular orderings $c$ and $d$ of $G$ are semiconjugate if and only if 
\label{semi classes}
\begin{enumerate}
\item There exists a generating set $\{s_i \}_{i \in I}$ of $G$ such that $\mathrm{rot}_c(s_i) = \mathrm{rot}_d(s_i)$ for all $i \in I$.
\item For all $g, h \in G$ we have $\tau_c(g, h) = \tau_d(g,h)$.
\end{enumerate}
\end{proposition}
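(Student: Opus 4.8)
The plan is to prove both directions by passing to the central extensions $\widetilde{G}_c,\widetilde{G}_d$ of Construction \ref{lift} and tracking how the quasimorphism $\widetilde{\mathrm{rot}}$ behaves under the data of Definition \ref{circ_semiconjugacy}. I will use freely the following facts, all immediate from Construction \ref{lift}: in the coordinates $(x,m)$ one has $[(x,m)]_c=m$, so $\widetilde{\mathrm{rot}}_c$ (the homogenization of $[\,\cdot\,]_c$) is a homogeneous quasimorphism of defect $\le 1$ with $\widetilde{\mathrm{rot}}_c(xz_c^k)=\widetilde{\mathrm{rot}}_c(x)+k$; the section $s_c(g)=(g,0)$ has $[s_c(g)]_c=0$, hence $R_c(g):=\widetilde{\mathrm{rot}}_c(s_c(g))\in[0,1]$; and $s_c(g)s_c(h)=s_c(gh)z_c^{f_c(g,h)}$, giving $\tau_c(g,h)=R_c(gh)-R_c(g)-R_c(h)+f_c(g,h)$. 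Homogeneous quasimorphisms are also odd and conjugation invariant (Proposition \ref{conj_inv}).

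\emph{Necessity.} Suppose $c$ is semiconjugate to $d$, witnessed by a nondecreasing $f:D(\widetilde{G}_c)\to D(\widetilde{G}_d)$ and an equivalence of extensions $\phi$. The key step is to show $\widetilde{\mathrm{rot}}_d(\phi(h))=\widetilde{\mathrm{rot}}_c(h)$ for all $h\in\widetilde{G}_c$: applying the nondecreasing, $\phi$-equivariant map $f$ to $z_c^{[h^n]_c}(id)\le_c h^n(id)\le_c z_c^{[h^n]_c+1}(id)$ and using $\phi(z_c)=z_d$ gives $z_d^{[h^n]_c}(f(id))\le_d\phi(h)^n(f(id))\le_d z_d^{[h^n]_c+1}(f(id))$; since $z_d$ is cofinal, $f(id)$ lies between two fixed powers of $z_d$, and comparing $\phi(h)^n(f(id))$ with $\phi(h)^n(id)$ (both moved by the central $z_d$) yields $\bigl|[\phi(h)^n]_d-[h^n]_c\bigr|\le C$ with $C$ independent of $n$; dividing by $n$ gives the claim. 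As $\phi$ covers $id_G$, it sends lifts of $g$ to lifts of $g$, so $\mathrm{rot}_c=\mathrm{rot}_d$ on $G$, which gives (1) for every generating set; and since $\phi$ is a homomorphism and $\tau$ is lift independent, $\tau_c(g,h)=\widetilde{\mathrm{rot}}_d\bigl(\phi(\widetilde g)\phi(\widetilde h)\bigr)-\widetilde{\mathrm{rot}}_d(\phi(\widetilde g))-\widetilde{\mathrm{rot}}_d(\phi(\widetilde h))=\tau_d(g,h)$, which is (2).

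\emph{Sufficiency.} Assume (1) and (2). Reducing $\tau_c(g,h)=R_c(gh)-R_c(g)-R_c(h)+f_c(g,h)$ modulo $\Z$ gives $\tau_c(g,h)\equiv\mathrm{rot}_c(gh)-\mathrm{rot}_c(g)-\mathrm{rot}_c(h)\pmod\Z$, and likewise for $d$; combined with (1), the oddness of $\mathrm{rot}_c$, and (2), an induction on word length upgrades (1) to $\mathrm{rot}_c=\mathrm{rot}_d$ on all of $G$. The same identity reads $f_c=\tau_c+\delta R_c$ as real-valued group $2$-cochains ($\delta$ the group-cohomology coboundary), so by (2), $f_d-f_c=\delta\mu$ with $\mu:=R_d-R_c$; since $\mu(g)\equiv\mathrm{rot}_d(g)-\mathrm{rot}_c(g)\equiv 0\pmod\Z$, $\mu$ is $\Z$-valued and bounded (as $R_c,R_d\in[0,1]$), so $\phi(g,n):=(g,\,n-\mu(g))$ is a well-defined equivalence of extensions, and one checks directly that $\widetilde{\mathrm{rot}}_d\circ\phi=\widetilde{\mathrm{rot}}_c$. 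Now put $f(\alpha)=\sup_{\le_d}\{\phi(h)\mid h\in\widetilde{G}_c,\ h\le_c\alpha\}$. This is nondecreasing by construction, and, since $<_c$ is left invariant and left multiplication by $\phi(h_0)$ is an order automorphism of $D(\widetilde{G}_d)$, it is $\phi$-equivariant: $f(h_0\alpha)=\sup_{\le_d}\{\phi(h_0)\phi(h')\mid h'\le_c\alpha\}=\phi(h_0)f(\alpha)$. The content is that these suprema exist and are proper: $\mu\le 1$ yields the \emph{coarse monotonicity} estimate $h_1<_c h_2\Rightarrow\phi(h_1)<_d z_d^{\,2}\phi(h_2)$ — writing $h_1^{-1}h_2=(g,n)\in P_c$, so $n\ge 0$, one computes $\phi(h_1)^{-1}z_d^{\,2}\phi(h_2)=(g,\,n-\mu(g)+2)\in P_d$ — and together with cofinality of $z_c$ this bounds $\{\phi(h)\mid h\le_c\alpha\}$ above in $D(\widetilde{G}_d)$. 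Then $(f,\phi)$ witnesses that $c$ is semiconjugate to $d$.

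\emph{Expected main obstacle.} The crux is the construction of the monotone map $f$ in the last part of the sufficiency argument, together with its mirror in the necessity argument: converting the bare numerical identity $\widetilde{\mathrm{rot}}_d\circ\phi=\widetilde{\mathrm{rot}}_c$ into the coarse order-theoretic control (coarse monotonicity, the displacement bound) needed to see that the supremum formula defines a genuine, nondegenerate monotone map, and verifying that the cuts entering these estimates — in particular $f(id)$ — are proper so that cofinality of $z_c$ and $z_d$ can be applied. The remainder is bookkeeping with the cocycle $f_c$, the section $s_c$, and standard quasimorphism identities.
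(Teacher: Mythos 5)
Your proof is correct, and the sufficiency direction takes a genuinely different route from the paper's. The paper builds the equivalence of extensions $\phi$ generator-by-generator: it chooses lifts $u_i\in\widetilde{G}_c$, $v_i\in\widetilde{G}_d$ of the $s_i$ with matching $\widetilde{\mathrm{rot}}$, sets $\phi(u_i)=v_i$, $\phi(z_c)=z_d$, and verifies well-definedness by observing that a relation $u_{i_1}\cdots u_{i_n}z_c^{\ell}=id$ forces $v_{i_1}\cdots v_{i_n}$ to be the power of $z_d$ with the correct $\widetilde{\mathrm{rot}}$-value. You instead observe the identity $f_c=\tau_c+\delta R_c$ with $R_c(g)=\widetilde{\mathrm{rot}}_c((g,0))\in[0,1]$, so that hypothesis (2) makes $f_d-f_c=\delta\mu$ with $\mu=R_d-R_c$ integer-valued and bounded, and you write $\phi$ in closed form as $(g,n)\mapsto(g,n-\mu(g))$. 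This is cleaner in two respects: it eliminates the generating-set bookkeeping and the well-definedness check, and it makes the upgrade from (1) on generators to $\mathrm{rot}_c=\mathrm{rot}_d$ on all of $G$ (which the paper also needs, via its inductive step on words in the lifts) a transparent consequence of $\tau_c\equiv\mathrm{rot}_c(gh)-\mathrm{rot}_c(g)-\mathrm{rot}_c(h)\pmod{\Z}$ plus oddness. What the paper's approach buys in exchange is that it never needs the explicit cocycle formula for $\tau_c$ in terms of $f_c$ and $R_c$, working only with the axiomatic properties of $\widetilde{\mathrm{rot}}$. From that point on the two arguments reconverge: both define $f(\alpha)=\sup_{\leq_d}\{\phi(h)\mid h\leq_c\alpha\}$, both bound this set above by comparing against powers of $z_d$ (your coarse-monotonicity estimate $\phi(h_1)<_d z_d^{2}\phi(h_2)$ is the same estimate the paper extracts from $[g]_c\leq\widetilde{\mathrm{rot}}_c(g)\leq[g]_c+1$, with a marginally better constant), and both verify equivariance by translating the defining set. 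Your necessity argument is essentially identical to the paper's: apply the monotone equivariant map to the sandwich $z_c^{[h^n]_c}\leq_c h^n<_c z_c^{[h^n]_c+1}$ and absorb $f(id)$ into a uniform additive constant.
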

\begin{proof}
	First suppose that $c$ and $d$ satisfy conditions (1) and (2).  For each $s_i$, choose lifts $u_i \in \widetilde{G}_c$ and $v_i \in \widetilde{G}_d$ such that $\widetilde{\mathrm{rot}}_c(u_i) = \widetilde{\mathrm{rot}}_d(v_i)$.  This is possible since $\mathrm{rot}_c(s_i) = \mathrm{rot}_d(s_i)$, so $\widetilde{\mathrm{rot}}_c(u_i)$ and $\widetilde{\mathrm{rot}}_d(v_i)$ differ (\textit{a priori}) by an integer quantity which can be corrected by multiplying by appropriate powers of central elements $z_c$ and $z_d$.
	
	Then for any finite word in the generators $\{s_i\}_{i \in I}$, say $s_{i_1} \cdots s_{i_n}$, the corresponding products of lifts $u_{i_1} \cdots u_{i_n}$ and $v_{i_1} \cdots v_{i_n}$ satisfy
	\[ \widetilde{\mathrm{rot}}_c(u_{i_1} \cdots u_{i_n}) =\widetilde{\mathrm{rot}}_d(v_{i_1} \cdots v_{i_n})
	\]
	which we can show by inductively applying (2).
	
	Now note that $\{u_i \}_{i \in I} \cup \{ z_c \}$ is a generating set for $\widetilde{G}_c$, and $\{ v_i \}_{i \in I} \cup \{ z_d \}$ is a generating set for $\widetilde{G}_d$.  We can define an equivalence of extensions $\phi: \widetilde{G}_c \rightarrow \widetilde{G}_d$ by $\phi(u_i) = v_i$ for all $i \in I$ and $\phi(z_c) = z_d$.  That this defines a homomorphism follows from the observation that $u_{i_1} \cdots u_{i_n} z_c^{\ell} = id$ implies that $s_{i_1} \cdots s_{i_n} = id \in G$, and thus $v_{i_1} \cdots v_{i_n}$ is a power of $z_d$.  Therefore since $ \widetilde{\mathrm{rot}}_c(u_{i_1} \cdots u_{i_n}) = - \ell$, then $\widetilde{\mathrm{rot}}_d(v_{i_1} \cdots v_{i_n}) = -\ell$ so that we must have $v_{i_1} \cdots v_{i_n} z_d^{\ell} = id$.  That $\phi$ defines an equivalence of extensions is obvious from its definition.   
	
	Next we construct $f: D(\widetilde{G}_c) \rightarrow D(\widetilde{G}_d)$.  To do this, note that the containment $\widetilde{G}_c \subset D(\widetilde{G}_c)$ allows us to apply the map $\phi$ to certain elements of $D(\widetilde{G}_c)$, so that we may define
	\[ f( \alpha) = \sup_{\leq_d} \{ \phi(g) \mid g \in \widetilde{G}_c  \mbox{ and } g \leq_c \alpha \}.
	\]
	We argue that this supremum is finite, so that $f$ is well-defined. To see this, observe that if $g \in \widetilde{G}_c$, we have $[g]_c \leq \widetilde{\mathrm{rot}}_c(g) \leq [g]_c+1$ (and the same holds for elements of $\widetilde{G}_d$ and $[\cdot]_d$).  Now given $\alpha \in \widetilde{G}_c$, choose $k$ such that $\alpha <_c z_c^k$.  Then if $g = u_{i_1} \cdots u_{i_n} z_c^{\ell} \in \widetilde{G}_c$ satisfies $g < \alpha$, then $u_{i_1} \cdots u_{i_n} < z^{k-\ell}$ and so $\widetilde{\mathrm{rot}}_c(u_{i_1} \cdots u_{i_n}) <k - \ell +1$.  But then $\widetilde{\mathrm{rot}}_d(v_{i_1} \cdots v_{i_n}) < k-\ell+1$, and so $[v_{i_1} \cdots v_{i_n}]_d +1 \leq k-\ell+1$.  It follows that  $v_{i_1} \cdots v_{i_n} < z_d^{k-\ell+3}$.  Thus $\phi(u_{i_1} \cdots u_{i_n} z_c^{\ell}) =v_{i_1} \cdots v_{i_n}z_d^{\ell}<z_d^{k+3}$, and so all such elements are bounded in $\widetilde{G}_d$.  Thus $f$ is well-defined, and note that it is plainly nondecreasing by definition.

 Fix  $h \in \widetilde{G}_c$ and $\alpha \in D(\widetilde{G}_c)$, we will next verify that $f(h(\alpha)) = \phi(h)(f(\alpha))$.  We compute 
	\[ f(h(\alpha)) = \sup \{ \phi(g) \mid g \leq_c h(\alpha) \} = \sup \{ \phi(g) \mid h^{-1}g \leq _c\alpha \}
	\]
	and note that the set of $\phi(g)$ with $h^{-1}g \leq_c \alpha$ is the same as the set of elements $\phi(h)\phi(h^{-1}g)$ with $h^{-1}g \leq_c \alpha$.    Thus
	\[ f(h(\alpha)) = \sup_{\leq_d}\{ \phi(h) \phi(h^{-1}g) \mid h^{-1}g \leq_c \alpha \} = \sup_{\leq_d} \{ \phi(h) \phi(s) \mid s \leq \alpha \} = \phi(h)(f(\alpha)).
	\]  Thus conditions (1) and (2) imply that there is a semiconjugacy between the circular orderings $c$ and $d$.
	
	On the other hand, suppose that $c$ and $d$ are semiconjugate with $f : \widetilde{G}_c \rightarrow \widetilde{G}_d$ a nondecreasing map, and $\phi$ an equivalence of extensions that demonstrate the semiconjugacy.  Then $\phi(g)f(\alpha) = f((\alpha))$ for all $g \in \widetilde{G}_c$ and all $\alpha \in D(\widetilde{G}_d)$.  Now if $n, k$ are any integers, from $z_c^k \leq_c g <_c z_c^{k+1}$ applying $f$ yields $z_d^k f(id) \leq_d \phi(g) f(id) \leq_d z_d^{k+1}f(id)$.   Choose $\ell$ such that $z_d^{\ell} \leq_d f(id) <_d z_d^{\ell +1}$, then $z_d^{k+1} f(id) <_d z_d^{\ell +k + 2}$.  Now if $z_d^{k+2} \leq_d \phi(g)$ then $z_d^{k+\ell+2} \leq_d \phi(g)z_d^{\ell}$, further we can apply $\phi(g)$ on the left of $z_d^{\ell} \leq_d f(id) <_d z_d^{\ell +1}$ to arrive at $\phi(g)z_d^{\ell} \leq_d \phi(g) f(id)$.  Stringing these inequalities together, we have 
	$$\phi(g) f(id) \leq_d z_d^{k+1}f(id) <_d z_d^{\ell +k + 2} \leq_d \phi(g)z_d^{\ell}  \leq_d \phi(g) f(id).$$

This is a contradiction, so one concludes $ \phi(g)  <_d z_d^{k+2}$.  Similarly one can show that $z_d^{k-1}$ is a lower bound for $\phi(g)$, yielding $z_d^{k-1}  \leq_d \phi(g)  <_d z_d^{k+2}$.   We conclude that $| [g]_c - [\phi(g)]_d| \leq 1$ for all $g \in \widetilde{G}_c$, from which it follows that $\widetilde{\mathrm{rot}}_c(g) = \widetilde{\mathrm{rot}}_d(\phi(g))$.  
	
	Now suppose $g \in G$ and $\widetilde{g}$ is a lift of $g$ in $\widetilde{G}_c$, so then $\phi(\widetilde{g})$ is a lift of $g$ in $\widetilde{G}_d$.  From the previous paragraph, $\widetilde{\mathrm{rot}}_c(\widetilde{g}) = \widetilde{\mathrm{rot}}_d(\phi(\widetilde{g}))$ and it follows that  $\mathrm{rot}_c(g) = \mathrm{rot}_d(g)$.  Similarly $\tau_c(g,h) = \tau_d(g,h)$ for all $g, h \in G$.

\end{proof}

We note that with only slight modifications to this proof, it is easy to show that two circular orderings $c$ and $d$ of $G$ are semiconjugate if and only if $\mathrm{rot}_c = \mathrm{rot}_d$ and $\tau_c(g, h) = \tau_d(g,h)$ as maps $G \rightarrow \mathbb{R}/\mathbb{Z}$ and $G \times G \rightarrow \mathbb{R}$.  We also note the following corollary, indicating that our definitions agree with the classical constructions.

\begin{corollary}
Two circular orderings $c_1$ and $c_2$ of a group $G$ are semiconjugate if and only if their dynamical realizations $\rho_1, \rho_2 : G \rightarrow \mathrm{Homeo}_+(S^1)$ are semiconjugate in the classical sense.
\end{corollary}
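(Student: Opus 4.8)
The plan is to show that both the generalized semiconjugacy of Definition \ref{circ_semiconjugacy} and the classical semiconjugacy of circle actions are detected by exactly the same numerical data, namely the rotation number function and the translation number cocycle. On the generalized side this is precisely Proposition \ref{semi classes} (in the sharpened form noted just after its proof): $c_1$ and $c_2$ are semiconjugate if and only if $\mathrm{rot}_{c_1}$ and $\mathrm{rot}_{c_2}$ agree on a generating set of $G$ and $\tau_{c_1} = \tau_{c_2}$ as maps $G \times G \rightarrow \mathbb{R}$. On the classical side, the theorem of Matsumoto \cite{Mat} (see also the cohomology-free treatment in \cite{Man2}) states that two actions $\rho_1, \rho_2 : G \rightarrow \mathrm{Homeo}_+(S^1)$ are semiconjugate if and only if the rotation numbers $\mathrm{rot}(\rho_1(s_i))$ and $\mathrm{rot}(\rho_2(s_i))$ agree on a generating set $\{s_i\}$ and the translation number cocycles of $\rho_1$ and $\rho_2$ coincide. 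So it suffices to identify these two packages of invariants under the correspondence $c \mapsto \rho_c$ (which presupposes that the dynamical realization exists, e.g.\ because $G$ is countable).

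The bridge is the observation that the central extension $\widetilde{G}_c$ of Construction \ref{lift}, together with its left-ordering $<_c$ and cofinal central element $z_c$, is canonically isomorphic as a $\mathbb{Z}$-central extension to the pullback of the universal central extension $\widetilde{\mathrm{Homeo}}_+(S^1) \rightarrow \mathrm{Homeo}_+(S^1)$ along $\rho_c$, with $z_c$ corresponding to the generator (translation by $1$) of the kernel $\mathbb{Z}$. Concretely, the dynamical realization of $c$ is built from the dynamical realization of the left-ordering $<_c$ on $\widetilde{G}_c$: the latter gives an order-embedding $\widetilde{G}_c \hookrightarrow \mathbb{R}$, equivariant for left multiplication, sending $z_c$ to translation by $1$, and passing to the quotient by $\langle z_c \rangle$ yields the action on $S^1$. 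Under this embedding the function $[\,\cdot\,]_c$ introduced before Definition \ref{rot def} becomes the integer part of the point $\widetilde{\rho_c}(\widetilde g)(0) \in \mathbb{R}$, so $\widetilde{\mathrm{rot}}_c(\widetilde g) = \lim_n [\widetilde g^{\,n}]_c / n$ is exactly the classical translation number of the homeomorphism of $\mathbb{R}$ corresponding to $\widetilde g$. Reducing modulo $\mathbb{Z}$ gives $\mathrm{rot}_c(g) = \mathrm{rot}(\rho_c(g))$, and forming the defect as in Definition \ref{rot def} gives that $\tau_c$ equals the classical translation number cocycle of $\rho_c$.

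With this identification the corollary is immediate: $c_1$ and $c_2$ are semiconjugate in the sense of Definition \ref{circ_semiconjugacy} iff, by Proposition \ref{semi classes}, $\mathrm{rot}_{c_1}$ and $\mathrm{rot}_{c_2}$ agree on a generating set and $\tau_{c_1} = \tau_{c_2}$; iff, by the previous paragraph, $\mathrm{rot}(\rho_1(s_i)) = \mathrm{rot}(\rho_2(s_i))$ on that generating set and the translation number cocycles of $\rho_1$ and $\rho_2$ coincide; iff, by Matsumoto's theorem, $\rho_1$ and $\rho_2$ are semiconjugate in the classical sense. The main obstacle is the middle paragraph: one must set up the dynamical realization of a circular ordering carefully enough to see that the ad hoc central extension $\widetilde{G}_c$ of Construction \ref{lift} genuinely is the pullback extension, and that the order-embedding $\widetilde{G}_c \hookrightarrow \mathbb{R}$ (and its extension to $D(\widetilde{G}_c)$) carries $<_c$ onto the standard lift of the circle action and $[\,\cdot\,]_c$ onto the integer-part function; once this is in place, the remainder is a matter of matching definitions.
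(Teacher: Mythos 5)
Your proposal is correct and follows essentially the same route as the paper: the paper's proof consists precisely of the remark that one need only verify that $\widetilde{\mathrm{rot}}_{c_i}$ (and hence $\mathrm{rot}_{c_i}$ and $\tau_{c_i}$) agrees with the classical translation number data arising from the dynamical realizations, after which Proposition \ref{semi classes} on one side and Matsumoto's theorem on the other give the equivalence. Your middle paragraph simply spells out the verification that the paper leaves to the reader, correctly identifying $\widetilde{G}_c$ with the pullback of $\widetilde{\mathrm{Homeo}}_+(S^1) \rightarrow \mathrm{Homeo}_+(S^1)$ and $[\,\cdot\,]_c$ with the integer-part function.
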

\begin{proof}
One need only verify that our definition of $\widetilde{\mathrm{rot}}_{c_i}$ agrees with the classical definition of translation number arising from the dynamical realizations $\rho_1, \rho_2 : G \rightarrow \mathrm{Homeo}_+(S^1)$. 
\end{proof}

With Proposition \ref{semi classes} in hand, we are able to show that generalized semiconjugacy behaves as one would expect with respect to common constructions, such as convex subgroups and secret left-orderings.  We first require a technical lemma.

\begin{lemma}
\label{open set}
Let $(G,c)$ be a circularly ordered group.  Fix $g \in G$ and let $\widetilde{g} = (g, 0) \in \widetilde{G}_c$.  Fix another circular ordering $d$ of $G$ and consider the element $h = (g, 0) \in \widetilde{G}_d$.  
For each $n \geq 0$, if $d$ lies in the open neighbourhood 
\[U_n= \bigcap_{i=1}^n U_{(id, g^i, g^{i+1})}^{c(id, g^i, g^{i+1})}  \subset \mathrm{CO}(G)
\]  then $[h^n]_d = [\widetilde{g}^n]_c$.
\end{lemma}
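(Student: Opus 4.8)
The plan is to understand how the integer $[h^n]_d$ is computed directly from the cocycle $f_d$ in Construction \ref{lift}, and then observe that this computation only involves the values of $d$ on a finite list of triples --- precisely the triples that are "controlled" by membership in $U_n$. Recall that in $\widetilde{G}_d$ we have $h = (g,0)$, so by the multiplication rule $h^n = (g^n, \sum_{i=1}^{n-1} f_d(g^i, g))$ for $n \geq 1$ (with the empty sum for $n=1$), and similarly $\widetilde{g}^n = (g^n, \sum_{i=1}^{n-1} f_c(g^i, g))$ in $\widetilde{G}_c$. The key elementary fact about Construction \ref{lift} is that for an element $(a, m) \in \widetilde{G}_d$ with $a \neq id$, one has $[(a,m)]_d = m$: indeed $(id, m) \leq_d (a, m)$ since $(id,m)^{-1}(a,m) = (a, 0) \in P$ (as $a \neq id$ and the $\Z$-coordinate is $0$), while $(a,m) <_d (id, m+1)$ since $(a,m)^{-1}(id, m+1) = (a^{-1}, 1 - f_d(a, a^{-1}))$, and $f_d(a,a^{-1}) = 1$ by definition when $a \neq id$, so this element is $(a^{-1}, 0) \in P$. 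Hence $[h^n]_d = \sum_{i=1}^{n-1} f_d(g^i, g)$ whenever $g^n \neq id$; the same identity gives $[\widetilde{g}^n]_c = \sum_{i=1}^{n-1} f_c(g^i, g)$.

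So the statement reduces to showing that $f_d(g^i, g) = f_c(g^i, g)$ for every $i$ with $1 \leq i \leq n-1$, under the hypothesis $d \in U_n$. Reading off Construction \ref{lift}, the value of $f_d(g^i, g)$ is determined by whether $g^i = id$, whether $g^{i+1} = id$, and (when both are nontrivial and distinct) by the value $d(id, g^i, g^{i+1})$ versus $d(id, g^{i+1}, g^i)$. The trivial/nontrivial pattern of $g^i$ and $g^{i+1}$ is a property of the group $G$ alone and is shared by the two cocycles, so those cases give $f_d(g^i,g) = f_c(g^i,g)$ automatically. In the remaining case, membership in $U_n = \bigcap_{i=1}^n U^{c(id, g^i, g^{i+1})}_{(id, g^i, g^{i+1})}$ forces $d(id, g^i, g^{i+1}) = c(id, g^i, g^{i+1})$ for each such $i$, and by the cocycle antisymmetry $d(id, g^{i+1}, g^i) = -d(id, g^i, g^{i+1})$ the value $f_d(g^i, g)$ matches $f_c(g^i, g)$ exactly. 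Summing over $i$ from $1$ to $n-1$ yields $[h^n]_d = [\widetilde{g}^n]_c$. (The edge case $n = 0$ is trivial since $h^0 = \widetilde{g}^0 = id$ gives $[h^0]_d = [\widetilde{g}^0]_c = 0$.)

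The one point requiring a little care --- and the only place the argument could go wrong --- is the case distinction when $g$ has finite order, i.e. when $g^n = id$ for some $n$ in the relevant range. If $g^{i+1} = id$ with $g^i \neq id$, then $f_d(g^i, g) = 1 = f_c(g^i, g)$ by the second clause, so no triple is consulted; and one should check this case is actually covered by the index range, since the hypothesis only controls triples $(id, g^i, g^{i+1})$ for $i \leq n$, which is compatible with needing $f_d(g^i, g)$ only for $i \leq n-1$. I also need to confirm that the identity $[(a,m)]_d = m$ is applied only when it is valid, i.e. that $g^n \neq id$; but if $g^n = id$ then $h^n = (id, \sum_{i=1}^{n-1} f_d(g^i,g))$ is a power of $z_d$, say $z_d^{M}$, and $[z_d^M]_d = M$ directly from $z_d^M \leq_d z_d^M <_d z_d^{M+1}$, so the formula $[h^n]_d = \sum_{i=1}^{n-1} f_d(g^i, g)$ persists. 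Thus in all cases the computation goes through, and the lemma follows. The main obstacle is simply bookkeeping: matching the finitely many triples whose $d$-values are pinned down by $U_n$ against exactly the finitely many triples that enter the telescoping expression for $[h^n]_d$, and handling the torsion case cleanly.
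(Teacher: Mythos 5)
Your proof is correct and follows essentially the same route as the paper: both arguments reduce the claim to the equality of the cocycle values $f_d(g^i,g)$ and $f_c(g^i,g)$, which are exactly the quantities pinned down by membership in $U_n$. Your observation that $[(a,m)]_d = m$ for every $(a,m) \in \widetilde{G}_d$ is a slight streamlining: it gives $[h^n]_d = \sum_{i=1}^{n-1} f_d(g^i,g)$ in closed form and shows that only the conditions for $i \leq n-1$ (i.e.\ membership in $U_{n-1}$) are actually needed, whereas the paper derives the inequalities $[h^n]_d \geq [\widetilde{g}^n]_c$ and $[h^n]_d \leq [\widetilde{g}^n]_c$ separately and invokes the $i=n$ condition for the second.
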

\begin{proof} 
The condition that $[h^n]_{d} = [\widetilde{g}^n]_c$ holds if and only if the inequalities
\[ z_d^{[\widetilde{g}^n]_c} \leq_d h^n <_d z_d^{[\widetilde{g}^n]_c+1}
\]
hold in $\widetilde{G}_d$.  These inequalities are a consequence of a finite number of conditions on $d$; specifically we require
\[ id \leq_d h^n z_d^{-[\widetilde{g}^n]_c} \mbox{ and } id <_d h^{-n}z_d^{[\widetilde{g}^n]_c+1}.
\]
We calculate that for $n>0$
\[ h^n z^{-[\widetilde{g}^n]_c} = (g,0)^n(id, -[\widetilde{g}^n]_c) = \left( g^n, \, \sum_{i=1}^{n-1}f_d(g^i, g) -[\widetilde{g}^n]_c \right)
\]
and so $id \leq_d h^n z^{-[\widetilde{g}^n]_c}$ if and only if $\sum_{i=1}^{n-1}f_d(g^i, g) -[\widetilde{g}^n]_c \geq 0$.  Since we know 
\[ z_c^{[\widetilde{g}^n]_c} \leq_c (g,0)^n <_c z_c^{[\widetilde{g}^n]_c+1}
\]
holds in $\widetilde{G}_c$, it follows that $\sum_{i=1}^{n-1}f_c(g^i, g) -[\widetilde{g}^n]_c \geq 0$.  Thus if $f_d(g^i, g) = f_c(g^i, g)$ for $i=1, \ldots, n-1$ then $id \leq_d h^n z_d^{-[\widetilde{g}^n]_c}$, and we conclude $[h^n]_{d} \geq [\widetilde{g}^n]_c$.  If we additionally insist that $f_d(g^n, g) = f_c(g^n, g)$ then we get the second inequality $ id <_d h^{-n}z^{[\widetilde{g}^n]_c+1}$ and so conclude $[h^n]_{d} = [\widetilde{g}^n]_c$.

It follows that since the equalities $f_d(g^i, g) = f_c(g^i, g)$ for $i = 1, \ldots, n-1$ and $f_d(g^n, g) = f_c(g^n, g)$ hold whenever
\[ d \in \bigcap_{i=1}^n U_{(id, g^i, g^{i+1})}^{c(id, g^i, g^{i+1})} ,
\]
then for all such $d$, we have $[h^n]_{d} = [\widetilde{g}^n]_c$.
\end{proof}

Recall that a subgroup $C$ in a circularly ordered group $(G,c)$ is \textit{convex relative to $c$} if the left cosets $G/C$ inherit a natural circular ordering $\overline{c}$, defined by 
\[ \overline{c}(g_1C, g_2C, g_3C) = c(g_1, g_2, g_3)
\]
whenever the cosets $g_1C, g_2C$ and $g_3C$ are distinct.  Alternatively, one can define $C$ to be convex if for every $g \in G \setminus C$, $f \in G$, and $c_1, c_2 \in C$ the implication 
\[[c(c_1, g, c_2) = 1 \mbox{ and } c(c_2, f, c_1) = 1 ] \Rightarrow f \in C
\]
holds.  That these notions are equivalent appears as \cite[Lemma 5.1]{CG}.
\begin{proposition}
\label{prop convex}
Suppose that $C$ is convex relative to the circular orderings $c$ and $d$ of $G$. If 
 the quotient orderings $\overline{c}$ and $\overline{d}$ of the set of cosets $G/C$ induced by $c$ and $d$ agree, then $c$ and $d$ are semiconjugate.  
\end{proposition}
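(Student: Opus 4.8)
The plan is to invoke Proposition~\ref{semi classes} with generating set equal to all of $G$: it then suffices to prove that $\mathrm{rot}_c(g)=\mathrm{rot}_d(g)$ for every $g\in G$, and $\tau_c(g,h)=\tau_d(g,h)$ for every $g,h\in G$. We may assume $C$ is a proper subgroup (when $C=G$ the hypothesis is vacuous), so that $C$ is left-orderable and both $c|_C$ and $d|_C$ are secret circular orderings of left-orderings of $C$; write $c|_C=c_{<_C}$. We work with the lifts $(g,0)\in\widetilde G_c$ and their companions $(g,0)\in\widetilde G_d$: from the multiplication rule, $(g,0)^n=\big(g^n,\sum_{i=1}^{n-1}f_c(g^i,g)\big)$, and since one checks $[(x,m)]_c=m$, also $\widetilde{\mathrm{rot}}_c\big((g,0)\big)=\lim_n\tfrac1n\sum_{i=1}^{n-1}f_c(g^i,g)$. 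Writing $R_c(x)=\widetilde{\mathrm{rot}}_c((x,0))$ and using $\widetilde{\mathrm{rot}}_c(yz_c^k)=\widetilde{\mathrm{rot}}_c(y)+k$, one gets $\tau_c(g,h)=R_c(gh)+f_c(g,h)-R_c(g)-R_c(h)$, and similarly for $d$. The argument then comes down to showing that the cocycle values $f_c(\cdot,\cdot)$ and the numbers $R_c(\cdot)$ entering these formulas are, up to the unavoidable ambiguities, governed by $\overline c$ alone.

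The mechanism is the standard description of a convex subgroup as an ``arc'': each coset $kC$ is itself convex, carries the translated left-ordering $x<_{kC}y\iff k^{-1}x<_Ck^{-1}y$, and $c$ is recovered from $\overline c$ by blowing up the point $kC$ of the quotient circle into this ordered arc. In formulas: if the cosets $aC,bC,eC$ are distinct then $c(a,b,e)=\overline c(aC,bC,eC)$, whereas if $a,b\in kC$ and $e\notin kC$ then $c(a,b,e)=+1$ exactly when $k^{-1}a<_Ck^{-1}b$. Feeding the triples $(\mathrm{id},g^{i+1},g^i)$ into the definition of $f_c(g^i,g)$, and the triple $(\mathrm{id},gh,g)$ into that of $f_c(g,h)$, now expresses each cocycle value either as a value of $\overline c$, or as $\llbracket y<_C\mathrm{id}\rrbracket$ for a suitable $y\in C$.

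For rotation numbers we split according to the $C$-orbit $\{g^iC\}$ of $g$. If it is infinite, then $g^i\notin C$ for $i\ne0$, so every triple $(\mathrm{id},g^{i+1},g^i)$ has three distinct cosets, whence $c$ and $d$ agree on all the triples occurring in Lemma~\ref{open set}; that lemma gives $[(g,0)^n]_d=[(g,0)^n]_c$ for all $n$, so $R_c(g)=R_d(g)$. If $g\in C$, then $R_c(g)=\widetilde{\mathrm{rot}}_{c_{<_C}}((g,0))\in\{0,1\}$, so $\mathrm{rot}_c(g)=0=\mathrm{rot}_d(g)$. If the orbit is finite of size $m\ge2$, then $g^m\in C$ and homogeneity gives $R_c(g)=\tfrac1m\widetilde{\mathrm{rot}}_c\big((g,0)^m\big)=\tfrac1m\big(R_c(g^m)+\sum_{i=1}^{m-1}f_c(g^i,g)\big)$; here $f_c(g^i,g)$ for $1\le i\le m-2$ are values of $\overline c$, while $R_c(g^m)=\llbracket g^m<_C\mathrm{id}\rrbracket$ and $f_c(g^{m-1},g)=\llbracket g^m>_C\mathrm{id}\rrbracket$ are complementary, summing to $1$ whatever $<_C$ is, so $R_c(g)=\tfrac1m\big(1+\sum_{i=1}^{m-2}f_c(g^i,g)\big)$ depends only on $\overline c$. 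In all cases $\mathrm{rot}_c(g)=\mathrm{rot}_d(g)$.

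For the translation cocycle, observe that the number of $g$, $h$, $gh$ lying in $C$ is $0$, $1$, or $3$ (never $2$, as $C$ is a subgroup). If it is $0$, the cosets involved are distinct, so $f_c(g,h)$ and $R_c(g),R_c(h),R_c(gh)$ are all common to $c$ and $d$. If it is $1$, the blow-up formula identifies $f_c(g,h)$ with exactly one of $R_c(g)$, $R_c(h)$, or $1-R_c(gh)$ (according as $g$, $h$, or $gh$ lies in $C$), and that $<_C$-dependent quantity cancels against the matching term in $\tau_c(g,h)=R_c(gh)+f_c(g,h)-R_c(g)-R_c(h)$, leaving an expression in rotation numbers common to $c$ and $d$. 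If it is $3$, then $(\mathrm{id},gh,g)$ stays inside $C$, so $\tau_c(g,h)$ equals the translation cocycle of $c|_C$ at $(g,h)$ and $\tau_d(g,h)$ that of $d|_C$; both vanish, because for any left-ordering $<_0$ of a group $L$ the map $(x,n)\mapsto n+\llbracket x<_0\mathrm{id}\rrbracket$ is a homomorphism $\widetilde L_{c_{<_0}}\to\mathbb{Z}$. Hence $\tau_c=\tau_d$ throughout, and Proposition~\ref{semi classes} applies. The only delicate point, and the main obstacle, is exactly this bookkeeping for triples with a repeated coset: it rests on the two cancellations just highlighted and on the fact that the restriction of $c$ (or $d$) to a proper convex subgroup is a secret left-ordering, so that $\mathrm{rot}_{c|_C}$ and $\tau_{c|_C}$ vanish identically.
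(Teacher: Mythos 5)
Your proof is correct, and it follows the same overall strategy as the paper: reduce to Proposition \ref{semi classes} and verify $\mathrm{rot}_c=\mathrm{rot}_d$ and $\tau_c=\tau_d$ by a case analysis on how the powers of $g$ (resp.\ the elements $g,h,gh$) sit relative to the cosets of $C$, using Lemma \ref{open set} for the coset-orbit-infinite case and the canonical left-ordering of the convex subgroup for the rest. Where you genuinely diverge is in the execution of the remaining cases. The paper handles the finite-orbit rotation numbers by estimating $[\widetilde g^{\,n}]_c-[h^n]_d$ asymptotically via the pairing $f(g^{km-1},g)+f(g^{km},g)=1$, and then dismisses the entire $\tau$ verification with ``by arguing similarly''; you instead derive the closed formulas $\widetilde{\mathrm{rot}}_c((g,0))=\lim\frac1n\sum f_c(g^i,g)$, $R_c(g)=\frac1m\widetilde{\mathrm{rot}}_c((g,0)^m)$ and $\tau_c(g,h)=R_c(gh)+f_c(g,h)-R_c(g)-R_c(h)$, and then do explicit bookkeeping showing that every term depending on the induced ordering $<_C$ cancels. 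This buys something real: the induced orderings of $C$ coming from $c$ and from $d$ need not agree, so $R_c(x)$ and $R_d(x)$ can genuinely differ (by an integer) for $x\in C$, and your identification of $f_c(g,h)$ with $R_c(g)$, $R_c(h)$, or $1-R_c(gh)$ according to which of the three elements lies in $C$ is exactly the cancellation needed to make the $\tau$ step rigorous --- it is the detail the paper's proof leaves implicit. The only cosmetic caveat is your parenthetical that the case $C=G$ is ``vacuous'': the statement would actually be false there, so one should read the hypothesis as requiring $C$ to be a proper convex subgroup (as the paper implicitly does, since its canonical positive cone for $C$ references elements of $G\setminus C$).
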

\begin{proof}
If $C$ is convex in $(G,c)$, it admits a canonical positive cone 
\[ P = \{ h \in C \mid c(id, h, g) = 1 \mbox{ for some } g \in G \setminus C \}, 
\]
see \cite[Lemma 5.2]{CG} for details. Define a function $\eta:C \rightarrow \{0, 1 \}$ by $\eta(id) =0$ and if $h \neq id$ then $\eta(h) =0$ if and only if $h \in P$.  This yields a function satisfying $f_c(g,h) = \eta(g)-\eta(gh)+\eta(h)$ for all $g, h \in C$, so that $f_c$ is a coboundary when restricted to $C$, and therefore $\widetilde{C}_c$ is a split extension.   Consequently there is an embedding 
\[\phi:C \hookrightarrow  \widetilde{C}_c \hookrightarrow \widetilde{G}_c\]
that is given by $\phi(h) = (h, -\eta(h))$, and whose image turns out to be a convex subgroup of the left-ordering $<_c$ of $\widetilde{G}_c$ (\cite[Lemma 5.3]{CG}).  

It follows that if $C$ is convex in $(G,c)$ and $g \in C$, then $\mathrm{rot}_c(g) = 0$, as powers of the lift $(g, -\eta(g)) \in \widetilde{G}_c$ are bounded in $<_c$.  Similarly $\mathrm{rot}_d(g) = 0$ for all $g \in C$.

On the other hand, suppose that $g \in G \setminus C$, no proper power of $g$ lies in $C$, and $j, k, \ell$ are pairwise distinct nonnegative integers.   Then the cosets $g^j C, g^kC, g^{\ell}C$ are pairwise distinct and so
\[c(g^j, g^k,g^{\ell}) = \overline{c}(g^jC, g^kC,g^{\ell}C) = \overline{d}(g^jC, g^kC,g^{\ell}C)=d(g^j, g^k,g^{\ell}).
\]
Thus, in the notation of Lemma \ref{open set}, $d \in U_n$ for all $n \geq 0$.  From Lemma \ref{open set} it therefore follows that for $\tilde{g} = (g,0) \in \widetilde{G}_c$ and $h = (g,0) \in \widetilde{G}_d$ we have $[h^n]_{d} = [\widetilde{g}^n]_c$ for all $n \geq 0$.  Therefore $\mathrm{rot}_c(g) = \mathrm{rot}_d(g)$.  

Now suppose that $g \in G \setminus C$ and some power of $g$ lies in $C$, say $m$ is the smallest positive integer such that $g^m \in C$.  Then for every $k \geq 1$,  $c(id, g^{km-1}, g^{km})$ and $c(id, g^{km}, g^{km+1})$ must have opposite signs, since $C$ is convex and both $g^{km-1}, g^{km+1} \notin C$.  Consequently $f_c(g^{km-1}, g) + f_c(g^{km}, g) = 1$, and likewise $f_d(g^{km-1}, g) + f_d(g^{km}, g) = 1$.  It follows from arguments similar to those appearing in Lemma \ref{open set} that the lifts $\widetilde{g} = (g,0) \in \widetilde{G}_c$ and $h = (h, 0) \in \widetilde{G}_d$ satisfy $[\widetilde{g}^n]_c = [h^n]_d$ whenever $n$ is not a multiple of $m$, and when $n$ is a multiple of $m$, these quantities differ by at most $1$.  In either case, it follows that $\mathrm{rot}_c(g) = \mathrm{rot}_d(g)$.

By arguing similarly, and using the same appropriately chosen lifts of $g,h \in G$ to compute $\tau_c(g,h)$ and $\tau_d(g,h)$, we can conclude that $\tau_c(g,h) = \tau_d(g,h)$ for all $g, h \in G$.

From Proposition \ref{semi classes} we conclude that $c$ and $d$ are semiconjugate.
\end{proof}

\begin{corollary}
\label{LOG_zero}
The map $\iota : \mathrm{LO}(G) \rightarrow \mathrm{CO}(G)$, which sends a left-ordering to the corresponding secret left-ordering, satisfies
\[ \iota(\mathrm{LO}(G)) = \{ c \in \mathrm{CO}(G) \mid \mathrm{rot}_c(g) = \tau_c(g,h) = 0 \mbox{ for all $g, h \in G$} \}.
\]
In particular, $\iota(\mathrm{LO}(G))$ is a semiconjugacy class.
\end{corollary}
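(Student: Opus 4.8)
The plan is to establish the two inclusions separately, using the cocycle $f_c$ from Construction \ref{lift} as the main tool, and then to read off the last sentence from Proposition \ref{semi classes}.

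For the inclusion $\iota(\mathrm{LO}(G)) \subseteq \{\, c : \mathrm{rot}_c \equiv 0,\ \tau_c \equiv 0 \,\}$, I would take a left-ordering $<$ of $G$ with positive cone $Q$ and first check that the cocycle of $c_<$ is a coboundary. Defining $\eta_< : G \to \{0,1\}$ by $\eta_<(id) = 0$ and $\eta_<(g) = 0 \iff g \in Q$, a case analysis over the six possible $<$-orderings of $\{id, g, gh\}$ gives $f_{c_<}(g,h) = \eta_<(g) - \eta_<(gh) + \eta_<(h)$. Hence $\sigma : G \to \widetilde{G}_{c_<}$ defined by $\sigma(g) = (g, -\eta_<(g))$ is a homomorphic section, and because $\sigma(g)^n = \sigma(g^n) = (g^n, -\eta_<(g^n))$ has $\mathbb{Z}$-coordinate in $\{0,-1\}$ we get $[\sigma(g)^n]_{c_<} \in \{0,-1\}$ for all $n$, so $\widetilde{\mathrm{rot}}_{c_<}(\sigma(g)) = 0$. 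This gives $\mathrm{rot}_{c_<}(g) = 0$, and since $\sigma$ is a homomorphism also $\tau_{c_<}(g,h) = \widetilde{\mathrm{rot}}_{c_<}(\sigma(gh)) - \widetilde{\mathrm{rot}}_{c_<}(\sigma(g)) - \widetilde{\mathrm{rot}}_{c_<}(\sigma(h)) = 0$. (This is the argument already used inside the proof of Proposition \ref{prop convex}, now run with the ``convex subgroup'' equal to all of $G$.)

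For the reverse inclusion, suppose $\mathrm{rot}_c \equiv 0$ and $\tau_c \equiv 0$. Reading $\tau_c(\bar{x},\bar{y}) = \widetilde{\mathrm{rot}}_c(xy) - \widetilde{\mathrm{rot}}_c(x) - \widetilde{\mathrm{rot}}_c(y)$ for $x,y \in \widetilde{G}_c$ shows $\widetilde{\mathrm{rot}}_c$ is additive, and $\mathrm{rot}_c \equiv 0$ shows it is integer-valued; thus $\widetilde{\mathrm{rot}}_c : \widetilde{G}_c \to \mathbb{Z}$ is a homomorphism with $\widetilde{\mathrm{rot}}_c(z_c) = 1$. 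A short computation shows $(g,0) \in [id, z_c)$ for every $g \in G$ --- it lies in $P$ for $g \neq id$, and one checks $(g,0) <_c z_c$ directly --- so $[(g,0)]_c = 0$, and therefore, using $[h]_c \le \widetilde{\mathrm{rot}}_c(h) \le [h]_c + 1$, we get $\lambda(g) := \widetilde{\mathrm{rot}}_c\big((g,0)\big) \in [\,0,1\,] \cap \mathbb{Z} = \{0,1\}$. Applying the homomorphism $\widetilde{\mathrm{rot}}_c$ to the identity $(g,0)(h,0) = (gh, f_c(g,h))$ then yields $f_c(g,h) = \lambda(g) - \lambda(gh) + \lambda(h)$, i.e. $f_c$ is the coboundary of a $\{0,1\}$-valued function vanishing at $id$. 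I would isolate the lemma: whenever $f_c = \delta\eta$ for some $\eta : G \to \{0,1\}$ with $\eta(id) = 0$, the set $Q_\eta := \{ g \neq id : \eta(g) = 0\}$ is the positive cone of a left-ordering (the axioms follow from $f_c(g,g^{-1}) = 1$, which forces $\eta(g) + \eta(g^{-1}) = 1$, and from $f_c \ge 0$, which forces $\eta(gh) = 0$ when $\eta(g) = \eta(h) = 0$), and $c_{<_{Q_\eta}} = c$ since $\eta_{<_{Q_\eta}} = \eta$ and a circular ordering is recovered from its cocycle via $c(g_1,g_2,g_3) = 1 - 2 f_c(g_1^{-1}g_2, g_2^{-1}g_3)$ for distinct triples. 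Taking $\eta = \lambda$ shows $c \in \iota(\mathrm{LO}(G))$.

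Finally, the ``semiconjugacy class'' statement follows from Proposition \ref{semi classes} (and the remark after it): since $\mathrm{rot}$ and $\tau$ are complete invariants of semiconjugacy, any two elements of $\{ c : \mathrm{rot}_c \equiv 0,\ \tau_c \equiv 0\}$ are semiconjugate and nothing outside the set is semiconjugate to them, so this set is a single semiconjugacy class whenever it is nonempty (equivalently, whenever $G$ is left-orderable). The part I expect to require the most care is the $\subseteq$ direction: the six-case verification that $f_{c_<}$ is a coboundary, together with the observation that one must use the corrected lift $\sigma(g)$ rather than $(g,0)$, whose powers are generally unbounded in $<_{c_<}$. In the $\supseteq$ direction the one genuinely new point is that $\lambda$ lands in $\{0,1\}$ and not merely in $\mathbb{Z}$; granting that, the passage from the cocycle back to a left-ordering is routine.
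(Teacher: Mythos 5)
Your proof is correct and follows essentially the same route as the paper: the forward inclusion uses the coboundary $\eta_<$ and the bounded section $g \mapsto (g,-\eta_<(g))$ exactly as in the proof of Proposition \ref{prop convex}, and the converse extracts the same section of $\widetilde{G}_c$ (the paper's $\psi(g)$, the unique lift of $g$ with vanishing $\widetilde{\mathrm{rot}}_c$, which is $(g,-\lambda(g))$ in your notation) from the integer-valued homomorphism $\widetilde{\mathrm{rot}}_c$. The only cosmetic difference is the last step of the converse: the paper pulls back the ordering $<_c$ along $\psi$ and checks directly on triples that $c$ is the associated secret left-ordering, whereas you reconstruct the positive cone from the coboundary $\lambda$ and invoke the fact that $f_c$ determines $c$ on distinct triples; the two verifications amount to the same computation.
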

\begin{proof}
Suppose that $c$ is a secret left-ordering.  One can check that $$P = \{g \in G \mid c(g^{-1}, id, g) =1 \}$$ is the positive cone of the unique left-ordering $<$ such that $c = c_<$.  Now define $\eta:G \rightarrow \{ 0, 1\}$ by  $\eta(id) =0$ and if $h \neq id$ then $\eta(h) =0$ if and only if $h \in P$.  Then as in the proof of Proposition \ref{prop convex}, the image of the inclusion $g \mapsto (g, -\eta(g))$ is a convex subgroup of $\widetilde{G}_{c}$ relative to the ordering $<_{c}$.  Thus for an arbitrary $g \in G$, powers of the lift $\widetilde{g} = (g, -\eta(g))$ are bounded in $\widetilde{G}_{c}$, and so we calculate $\widetilde{\mathrm{rot}}_c(\widetilde{g}) =0$.  As $g$ was arbitrary,  $\mathrm{rot}_c(g) = \tau_c(g,h) = 0$ for all $g,h \in G$.

Conversely, suppose that $c$ is a circular ordering satisfying $\mathrm{rot}_c(g) = \tau_c(g,h) = 0$ for all $g,h \in G$. Then $\widetilde{\mathrm{rot}}_c$ is an integer-valued homomorphism $\widetilde{\mathrm{rot}}_c : \widetilde{G}_{c} \rightarrow \mathbb{Z}$ satisfying $\widetilde{\mathrm{rot}}_c (id, 1) = 1$.  Define $\psi :G \rightarrow  \widetilde{G}_{c}$ by $\psi(g) = \widetilde{g}$, where $\widetilde{g}$ is the unique lift of $g$ satisfying $\widetilde{\mathrm{rot}}_c(\widetilde{g}) =0$ (in particular, this means that for all $g \in G$ either $\psi(g) = (g, 0)$ or $\psi(g) = (g, -1)$).  Then $\psi$ is an injective homomorphism, and so we can consider the left-ordering $<_c^{\psi} =(<_c)^{\psi} $ of $G$.

We claim that $c$ is the secret left-ordering associated to $<_c^{\psi}$.  To prove this, it suffices to show that $id <_c^{\psi} g_1 <_c^{\psi} g_2$ implies $c(id, g_1, g_2) = 1$.  To this end, note that $id <_c^{\psi} g_1 <_c^{\psi} g_2$ is equivalent to $id <_c \psi(g_1) <_c \psi(g_2)$, and as $\psi(g)$ is either $(g, 0)$ or $(g, -1)$ for all $g \in G$, this is equivalent to $id <_c (g_1, 0) <_c (g_2, 0)$.  Now the product $(g_1, 0)^{-1}(g_2, 0) = (g_1^{-1}g_2, f_c(g_1^{-1}, g_2)-1)$ must be positive in the ordering $<_c$ of $\widetilde{G}_c$, which requires $f_c(g_1^{-1}, g_2)=1$.  This implies that $c(id, g_1^{-1}g_2, g_1^{-1}) = c(id, g_1, g_2) =1$.
\end{proof}

\subsection{Semiconjugacy classes as subspaces}

Our main result in this section is that semiconjugacy classes are compact, $G$-invariant subspaces of $\mathrm{CO}(G)$.  Proofs of compactness of semiconjugacy classes in the context of representations $\rho:G \rightarrow \mathrm{Homeo}_+(S^1)$ usually involve continuity of $\mathrm{rot}:\mathrm{Homeo}_+(S^1) \rightarrow \mathbb{R} / \mathbb{Z}$ with respect to the $C^0$ topology.  Our approach is to use the fact that rotation number of a group element $g \in G$ depends continuously on the underlying circular ordering of $G$ (there is a remark to this effect in the proof of \cite[Lemma 2.13]{CW}).  

However, typical proofs of continuity of $\mathrm{rot}$ in the classical setting often involve approximating sequences of representations (e.g. see \cite[Lemma 4.6.2]{Cal1}). In our setting, this approach runs up against a slight hiccup---$\mathrm{CO}(G)$ may not be a sequential space, as the next example shows.

\begin{example}
The space $\mathrm{LO}(G)$ can be thought of as a compact subspace of $\mathrm{CO}(G)$, via the inclusion $\iota :\mathrm{LO}(G) \rightarrow \mathrm{CO}(G)$ sending $<$ to the secret left-ordering $c_<$ determined by $<$.  This example illustrates a group $G$ for which $\mathrm{CO}(G)$ is not sequentially compact, since the compact subspace $\mathrm{LO}(G) $ is not sequentially compact.

Set $I = \{ 0, 1\}^{\mathbb{N}}$ and let $G$ denote the direct sum of $I$ copies of $\mathbb{Z}$.  Denote the generator of the $i$-th copy of $\mathbb{Z}$ by $a_i$.  Let $\pi_n : I \rightarrow \{ 0, 1\}$ denote the projection map for each $n \in \mathbb{N}$.

It happens that any choice of signs for the generators $a_i \in G$ can be extended to a bi-ordering of $G$.\footnote{This can be proved by transfinite induction, using a lexicographic construction.}  Using this claim, we construct a sequence $\{P_n\}_{n=1}^{\infty}$ of positive cones as follows.  Let $P_n$ be a choice of positive cone that satisfies: $a_i \in P_n$ if $\pi_n(i) =1$, and $a_i^{-1} \in P_n$ otherwise.  Now let $\{P_{n_k}\}_{k=1}^{\infty}$ be a subsequence, and define $j \in I$ as follows.  Let $j$ be the sequence with $1$ appearing in position $n_{2k}$ for all $k$, and $0$ appearing in every other position.

Now consider the generator $a_j$.  By our choice of $j$, we have $\pi_{n_k}(j) = 1$ if $j$ is even, meaning $a_j \in P_{n_k}$ when $k$ is even.  On the other hand, $\pi_{n_k}(j) = 0$ if $j$ is odd, so 
$a_j^{-1} \in P_{n_k}$ when $k$ is odd.  Thus the subsequence $\{P_{n_k}\}_{k=1}^{\infty}$ cannot converge, since even terms lie in the open set $U_{a_j}$ and odd terms lie in the open set $U_{a_j^{-1}} = U_{a_j}^c$.

Despite these spaces not being sequential, there is no obstacle to proving continuity of the following maps if one mimics the existing proofs using convergent nets.

\label{not_sequential}
\end{example}

\begin{definition} Let $G$ be a circularly orderable group.
For each $g \in G$, define $\rho_g : \mathrm{CO}(G) \rightarrow \mathbb{R} /\mathbb{Z}$ by $\rho_g(c) = \mathrm{rot}_c(g)$.  For each pair $(g,h) \in G^2$ define $\tau_{g,h} : \mathrm{CO}(G) \rightarrow \mathbb{R}$ by $\tau_{g,h}(c) = \tau_c(g,h)$.
\end{definition}

\begin{proposition}
\label{conj continuous}
The map $\rho_g : \mathrm{CO}(G) \rightarrow \mathbb{R} /\mathbb{Z}$ is continuous for each $g \in G$.
\end{proposition}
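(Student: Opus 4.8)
The plan is to prove continuity of $\rho_g$ at each point $c_0 \in \mathrm{CO}(G)$ by exhibiting, for every $\epsilon > 0$, a basic open neighbourhood of $c_0$ on which $\rho_g$ takes values within $\epsilon/1$ of $\rho_g(c_0)$ (working in $\mathbb{R}/\mathbb{Z}$, so ``within $\epsilon$'' means in the metric on the circle). The key tool is Lemma \ref{open set}: for each fixed $n$, the value $[h^n]_d$ of the integer part of the $n$-th power of the canonical lift $(g,0) \in \widetilde{G}_d$ is \emph{locally constant} in $d$, being determined by finitely many cocycle conditions $f_d(g^i,g) = f_{c_0}(g^i,g)$ for $i = 1, \dots, n$, which in turn hold on the explicit open set $U_n = \bigcap_{i=1}^n U^{c_0(id, g^i, g^{i+1})}_{(id, g^i, g^{i+1})}$.

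First I would recall that $\widetilde{\mathrm{rot}}_{c_0}(\widetilde{g}) = \lim_{n\to\infty} [\widetilde{g}^n]_{c_0}/n$ exists by Fekete's Lemma, and moreover that $\big| \widetilde{\mathrm{rot}}_{c_0}(\widetilde g) - [\widetilde g^n]_{c_0}/n \big| \le 1/n$ by the standard subadditivity estimate for quasimorphisms (here $[\cdot]_{c_0}$ satisfies $|[xy] - [x] - [y]| \le 1$). Then I would fix $N$ with $1/N < \epsilon/2$ and consider $d \in U_N$. By Lemma \ref{open set}, $[h^N]_d = [\widetilde g^N]_{c_0}$, where $h = (g,0) \in \widetilde G_d$ and $\widetilde g = (g,0) \in \widetilde G_c$. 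Applying the same Fekete estimate in $\widetilde G_d$ gives $\big| \widetilde{\mathrm{rot}}_d(h) - [h^N]_d / N \big| \le 1/N$. Since $\widetilde g$ and $h$ are both lifts of $g$, we have $\rho_g(d) = \widetilde{\mathrm{rot}}_d(h) \bmod \mathbb{Z}$ and $\rho_g(c_0) = \widetilde{\mathrm{rot}}_{c_0}(\widetilde g) \bmod \mathbb{Z}$; combining the two estimates with the equality of the $N$-th integer parts yields $|\widetilde{\mathrm{rot}}_d(h) - \widetilde{\mathrm{rot}}_{c_0}(\widetilde g)| \le 2/N < \epsilon$, hence $\rho_g(d)$ and $\rho_g(c_0)$ are within $\epsilon$ in $\mathbb{R}/\mathbb{Z}$. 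As $U_N$ is an open neighbourhood of $c_0$, this proves continuity at $c_0$.

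I expect the main subtlety, rather than a genuine obstacle, to be bookkeeping around lifts: $\widetilde{\mathrm{rot}}$ is only well-defined on $\widetilde G_c$, not on $G$, and changing lifts shifts it by an integer, so one must be careful that the two estimates above are compared using the \emph{same} choice of lift $(g,0)$ in each group before reducing mod $\mathbb{Z}$ — which is exactly what Lemma \ref{open set} is set up to provide. A secondary point worth a sentence: since $\mathrm{CO}(G)$ need not be sequential (Example \ref{not_sequential}), one cannot phrase continuity via sequences, but the argument above is already ``net-free'' — it produces an honest open neighbourhood — so no appeal to nets is needed here. The analogous statement for $\tau_{g,h}$ (continuity into $\mathbb{R}$) will follow by the same method applied to products of lifts, using that $\tau_c(g,h)$ is a finite linear combination of $\widetilde{\mathrm{rot}}$ values each of which is locally constant-to-within-$1/N$ by Lemma \ref{open set}.
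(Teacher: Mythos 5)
Your proposal is correct and uses exactly the paper's key ingredients: Lemma \ref{open set} to make $[h^n]_d$ locally constant in $d$ on the neighbourhood $U_n$, combined with the estimate $\bigl| \widetilde{\mathrm{rot}}(h) - [h^n]/n \bigr| \le 1/n$ applied in both $\widetilde{G}_c$ and $\widetilde{G}_d$ with the matched lifts $(g,0)$. The only difference is presentational: the paper verifies continuity by showing that for any net $d_\alpha \to c$ the values $\rho_g(d_\alpha)$ converge to $\rho_g(c)$, whereas you exhibit the $\epsilon$-neighbourhood directly, which is an equivalent (and, as you note, net-free) formulation of the same argument.
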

\begin{proof}
Suppose that $\{ d_{\alpha} \}_{\alpha \in A}$ is a net converging to $c \in \mathrm{CO}(G)$.  For each $n \geq 0$, choose an open set $U_n$ containing $c$ as in Lemma \ref{open set}.  Since  $\{ d_{\alpha} \}_{\alpha \in A}$ converges to $c$, for each $n$ there exists an $\alpha_n \in A$ such that $\beta > \alpha_n$ implies $d_{\beta} \in U_n$.  Consequently if $h_\alpha = (g, 0) \in \widetilde{G}_{d_{\alpha}}$ for all $\alpha \in A$ and $\widetilde{g} = (g,0) \in \widetilde{G}_c$, then $[h_{\beta}^n]_{d_{\beta}} = [\widetilde{g}^n]_c$ whenever $\beta > \alpha_n$.   For $\beta > \alpha_n$ we have 
\[ |\tau(h_{\beta}) - \tau(\widetilde{g})| \leq \left| \tau(h_{\beta}) - \frac{[h_{\beta}^n]_{d_{\beta}}}{n} \right|+ \left|\frac{[\widetilde{g}^n]_c}{n} - \tau(\widetilde{g}) \right| \leq \frac{1}{n} + \frac{1}{n} = \frac{2}{n}
\]
where the second inequality above follows from the fact that $|\tau(h) - [h]_c/n|\leq \frac{1}{n}$ for every $c \in \mathrm{CO}(G)$ and $h \in \widetilde{G}_c$.  It follows that the net $\{ \rho_g(d_{\alpha}) \}_{\alpha \in A}$ converges to $\rho_g(c)$, so that $\rho_g$ is continuous.
\end{proof}

By a similar argument we can prove:

\begin{proposition}
\label{tau continuous}
The map $\tau_{g,h} : \mathrm{CO}(G) \rightarrow \mathbb{R}$ is continuous for each pair $(g,h) \in G^2$.
\end{proposition}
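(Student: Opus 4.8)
The plan is to run the argument of Proposition \ref{conj continuous} three times in parallel, tracking the bookkeeping needed to assemble $\tau$ out of rotation numbers. Fix $(g,h)\in G^2$. Using the lifts $\widetilde g=(g,0)$ and $\widetilde h=(h,0)$ in $\widetilde G_c$ we have $\widetilde g\widetilde h=(gh,f_c(g,h))=(gh,0)\,z_c^{\,f_c(g,h)}$ (the product in Construction \ref{lift} contributes $f_c(gh,id)=0$), and since $\widetilde{\mathrm{rot}}_c(x z_c^{k})=\widetilde{\mathrm{rot}}_c(x)+k$ for all $x\in\widetilde G_c$ and $k\in\Z$, the lift-independence of $\tau$ gives
\[
\tau_c(g,h)=\widetilde{\mathrm{rot}}_c\big((gh,0)\big)+f_c(g,h)-\widetilde{\mathrm{rot}}_c\big((g,0)\big)-\widetilde{\mathrm{rot}}_c\big((h,0)\big),
\]
and the identical formula holds with $c$ replaced by any $d\in\mathrm{CO}(G)$. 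So it suffices to show that each of the four terms on the right depends continuously on the underlying circular ordering.

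First I would observe that for each fixed $x\in G$ the assignment $d\mapsto\widetilde{\mathrm{rot}}_d\big((x,0)\big)$ is a continuous $\R$-valued function: given a net $\{d_\alpha\}$ converging to $c$ and an $n\ge 0$, Lemma \ref{open set} supplies an open neighbourhood $U_n\ni c$ on which $[(x,0)^n]_d=[(x,0)^n]_c$; the net is eventually in $U_n$, and the estimate $|\widetilde{\mathrm{rot}}_d((x,0))-[(x,0)^n]_c/n|\le 1/n$ together with its analogue for $c$ forces $\widetilde{\mathrm{rot}}_{d_\alpha}((x,0))\to\widetilde{\mathrm{rot}}_c((x,0))$, exactly as in Proposition \ref{conj continuous}. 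Apply this with $x=g$, $x=h$, and $x=gh$. For the remaining term, note that $f_d(g,h)\in\{0,1\}$ is determined by the finitely many values $d(id,g,gh)$ and $d(id,gh,g)$, so $\{d\mid f_d(g,h)=f_c(g,h)\}$ is an open neighbourhood of $c$, the net eventually lies in it, and $f_{d_\alpha}(g,h)=f_c(g,h)$ eventually. Feeding these convergences into the displayed formula for $\tau_{d_\alpha}(g,h)$ yields $\tau_{d_\alpha}(g,h)\to\tau_c(g,h)$, which is the continuity of $\tau_{g,h}$.

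The only point requiring care is the one flagged in Example \ref{not_sequential}: one must argue with convergent nets rather than sequences. For a desired accuracy $2/n$ one collects finitely many open conditions on $d$ near $c$ — entry into the neighbourhood $U_n$ of Lemma \ref{open set} for each of $x\in\{g,h,gh\}$, plus stabilisation of $f_d(g,h)$ — each of which the net satisfies past some index, so taking the maximum of those indices does the job. Beyond this, no ideas are needed that are not already present in Lemma \ref{open set} and the proof of Proposition \ref{conj continuous}; I expect this nets-versus-sequences bookkeeping, together with getting the lift arithmetic $\widetilde g\widetilde h=(gh,0)z_c^{f_c(g,h)}$ exactly right, to be the only mildly delicate part.
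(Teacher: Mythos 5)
Your proof is correct and is essentially the argument the paper intends: the paper gives no separate proof of Proposition \ref{tau continuous}, stating only that it follows ``by a similar argument'' to Proposition \ref{conj continuous}, and your write-up is exactly that argument spelled out --- reduce $\tau_c(g,h)$ to $\widetilde{\mathrm{rot}}_c$ of the fixed lifts $(g,0)$, $(h,0)$, $(gh,0)$ plus the locally constant correction $f_c(g,h)$, then apply Lemma \ref{open set} with convergent nets. The lift arithmetic $\widetilde g\widetilde h=(gh,0)z_c^{f_c(g,h)}$ and the openness of $\{d\mid f_d(g,h)=f_c(g,h)\}$ are both verified correctly.
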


We conclude the following.

\begin{proposition}
\label{compact_invariant}
Semiconjugacy classes are compact subsets of $\mathrm{CO}(G)$, invariant under the $G$-action on $\mathrm{CO}(G)$ by conjugation.  
\end{proposition}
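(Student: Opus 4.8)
The plan is to read off compactness from the characterization of semiconjugacy in Proposition~\ref{semi classes} together with the continuity statements just proved, and to obtain $G$-invariance by constructing an explicit order isomorphism between the central extensions attached to $c$ and to $c\cdot f$.

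\textbf{Compactness.} By the remark following Proposition~\ref{semi classes}, two circular orderings $c,d$ of $G$ are semiconjugate if and only if $\mathrm{rot}_c=\mathrm{rot}_d$ as maps $G\to\mathbb{R}/\mathbb{Z}$ and $\tau_c=\tau_d$ as maps $G\times G\to\mathbb{R}$. Hence the semiconjugacy class $S$ of a given $c$ is exactly
\[ S \;=\; \bigcap_{g\in G}\rho_g^{-1}\!\big(\rho_g(c)\big)\;\cap\;\bigcap_{(g,h)\in G^2}\tau_{g,h}^{-1}\!\big(\tau_{g,h}(c)\big). \]
Since $\mathbb{R}/\mathbb{Z}$ and $\mathbb{R}$ are Hausdorff their singletons are closed, and since each $\rho_g$ and each $\tau_{g,h}$ is continuous (Propositions~\ref{conj continuous} and \ref{tau continuous}), every set on the right is closed in $\mathrm{CO}(G)$. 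Thus $S$ is closed, and a closed subset of the compact space $\mathrm{CO}(G)$ is compact.

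\textbf{$G$-invariance.} It suffices to show that $c\cdot f$ is semiconjugate to $c$ for every $c\in\mathrm{CO}(G)$ and $f\in G$. Using $(c\cdot f)(g_1,g_2,g_3)=c(g_1f^{-1},g_2f^{-1},g_3f^{-1})$ and left-invariance of $c$, one computes $(c\cdot f)(id,g,gh)=c(f^{-1},gf^{-1},ghf^{-1})=c(id,fgf^{-1},fghf^{-1})$, and similarly for the triple $(id,gh,g)$; matching this against the case split defining $f_c$ gives the identity $f_{c\cdot f}(g,h)=f_c(fgf^{-1},fhf^{-1})$ for all $g,h\in G$. From this identity it follows that $\Phi:\widetilde G_{c\cdot f}\to\widetilde G_c$, $\Phi(g,n)=(fgf^{-1},n)$, is a group isomorphism; it visibly carries the positive cone of $<_{c\cdot f}$ onto that of $<_c$ and sends $z_{c\cdot f}=(id,1)$ to $z_c$, so $\Phi$ is an order isomorphism preserving the central cofinal element. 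Consequently $[\Phi(x)]_c=[x]_{c\cdot f}$ for all $x\in\widetilde G_{c\cdot f}$, hence $\widetilde{\mathrm{rot}}_{c\cdot f}(x)=\widetilde{\mathrm{rot}}_c(\Phi(x))$. Applying this to a lift $(g,0)$ of $g$ (whose image $\Phi(g,0)=(fgf^{-1},0)$ is a lift of $fgf^{-1}$) yields $\mathrm{rot}_{c\cdot f}(g)=\mathrm{rot}_c(fgf^{-1})$, and the same computation on products of lifts gives $\tau_{c\cdot f}(g,h)=\tau_c(fgf^{-1},fhf^{-1})$. By Proposition~\ref{conj_inv} the right-hand sides equal $\mathrm{rot}_c(g)$ and $\tau_c(g,h)$. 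So $c$ and $c\cdot f$ share the same rotation-number and translation-cocycle functions, and Proposition~\ref{semi classes} shows they are semiconjugate; therefore each semiconjugacy class is $G$-invariant.

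\textbf{Main obstacle.} The only genuine work is the bookkeeping behind $f_{c\cdot f}(g,h)=f_c(fgf^{-1},fhf^{-1})$ and the verification that $\Phi$ respects the orderings on the central extensions; both are elementary but require care with left-invariance and the case analysis in the definition of $f_c$. A secondary subtlety worth flagging is that $\Phi$ is \emph{not} an equivalence of extensions in the strict sense---it covers the inner automorphism $g\mapsto fgf^{-1}$ of $G$ rather than the identity---so one cannot declare $c\sim c\cdot f$ directly from $\Phi$; this is precisely why the argument is routed through the rotation/translation-number characterization of Proposition~\ref{semi classes} together with the conjugation invariance supplied by Proposition~\ref{conj_inv}.
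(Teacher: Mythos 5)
Your proof is correct and follows essentially the same route as the paper: compactness via continuity of the rotation and translation-number maps together with the characterization in Proposition~\ref{semi classes}, and invariance via Proposition~\ref{conj_inv}. The one place you go beyond the paper is in explicitly verifying the identity $\mathrm{rot}_{c\cdot f}(g)=\mathrm{rot}_c(fgf^{-1})$ (and its $\tau$ analogue) through the isomorphism $\Phi(g,n)=(fgf^{-1},n)$ of central extensions; the paper simply cites Proposition~\ref{conj_inv} and leaves this intermediate step implicit, so your added detail---including the observation that $\Phi$ covers an inner automorphism rather than the identity and hence is not itself an equivalence of extensions---is a small but genuine improvement in rigor.
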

\begin{proof}
Define a continuous map $\rho: \mathrm{CO}(G) \rightarrow (\mathbb{R}/\mathbb{Z})^G$ by declaring $\rho(c)$ to be the element of $(\mathbb{R}/\mathbb{Z})^G$ that yields $\rho_g(c)$ upon projecting to the $g$-th factor.  Similarly define a continuous map $\tau : \mathrm{CO}(G) \rightarrow \mathbb{R}^{G^2}$ by declaring $\tau(c)$ to be the element of $\mathbb{R}^{G^2}$ that yields $\tau_{g,h}(c)$ upon projection to the $(g,h)$-th factor.

Sets of the form $\rho^{-1}(x)$ where $x \in (\mathbb{R}/\mathbb{Z})^G$ and $\tau^{-1}(y)$ where $y \in \mathbb{R}^{G^2}$ are closed, by continuity of the maps $\rho$ and $\tau$.  It follows that sets of the form $\rho^{-1}(x) \cap \tau^{-1}(y)$ are closed, hence compact.  By Theorem \ref{semi classes}, these sets correspond exactly to semiconjugacy classes, and by Proposition \ref{conj_inv}, these sets are invariant under the $G$-action.
\end{proof}

\section{Structure of semiconjugacy classes}
We begin with a basic lemma needed for the proof of Theorem \ref{Thm2}.

\begin{lemma}
\label{cts extension}
Suppose that
\[ 1 \rightarrow K \rightarrow G \stackrel{q}{\rightarrow} H \rightarrow 1
\] 
is a short exact sequence of groups, that $K$ is left-orderable and $H$ is circularly orderable.  For a fixed left-ordering $<$ of $K$, let $\phi_{<} : \mathrm{CO}(H) \rightarrow \mathrm{CO}(G)$ be the map defined by lexicographic extension of a circular ordering $c$ of $H$ by $<$, so that 
\[
\phi(c)(g_1,g_2,g_3) = \begin{cases}
c(q(g_1),q(g_2),q(g_3)) & \text{if } q(g_1),q(g_2),q(g_3) \text{ are all distinct,}\\
c_<(g_2^{-1}g_1,id,g_1^{-1}g_2) &\text{if } q(g_1) = q(g_2) \neq q(g_3),\\
c_<(id,g_1^{-1}g_2,g_1^{-1}g_3) &\text{if } q(g_1) = q(g_2) = q(g_3).
\end{cases}
\]
Then $\phi_<$ is continuous.
\end{lemma}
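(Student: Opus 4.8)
The plan is to check continuity directly against the subbasis of $\mathrm{CO}(G)$: a map into $\mathrm{CO}(G)$ is continuous if and only if the preimage of every subbasic open set $U_{(g_1,g_2,g_3)}^i$ (with $(g_1,g_2,g_3)\in G^3$ and $i\in\{0,\pm1\}$) is open. So I would fix such a triple and such an $i$ and compute $\phi_<^{-1}\!\big(U_{(g_1,g_2,g_3)}^i\big)$, exploiting the simple observation that, reading off the defining formula, the number $\phi_<(c)(g_1,g_2,g_3)$ depends on the argument $c$ only in the first case of the definition, namely when $q(g_1),q(g_2),q(g_3)$ are pairwise distinct, in which case it equals $c(q(g_1),q(g_2),q(g_3))$.

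I would then split into two cases. First, if $q(g_1),q(g_2),q(g_3)$ are pairwise distinct, then $\phi_<^{-1}\!\big(U_{(g_1,g_2,g_3)}^i\big)$ equals $U_{(q(g_1),q(g_2),q(g_3))}^i$, which is a subbasic open subset of $\mathrm{CO}(H)$ (it happens to be empty when $i=0$, since distinct triples never receive the value $0$, but it is open in any case). Second, if $q(g_1),q(g_2),q(g_3)$ are not all distinct, then $\phi_<(c)(g_1,g_2,g_3)$ takes a value $v\in\{0,\pm1\}$ that is determined solely by the left-ordering $<$ and the chosen elements $g_1,g_2,g_3$ (through $c_<$, or forced to be $0$ when the $g_j$ themselves are not pairwise distinct), hence is independent of $c$; consequently $\phi_<^{-1}\!\big(U_{(g_1,g_2,g_3)}^i\big)$ is either all of $\mathrm{CO}(H)$ (if $v=i$) or empty (if $v\neq i$), and open either way. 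Since the preimage of every subbasic open set is open, $\phi_<$ is continuous.

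The only point needing a little care --- the closest thing to an obstacle --- is the bookkeeping in the non-distinct-image case: one should confirm that the defining formula, including the sub-cases not written out explicitly (such as $q(g_1)=q(g_3)\neq q(g_2)$, handled via the cocycle relations or by the symmetry of the lexicographic construction), genuinely produces a value not depending on $c$, and that this value is consistent with $\phi_<(c)(g_1,g_2,g_3)=0$ whenever $g_1,g_2,g_3$ fail to be pairwise distinct. This is a short verification using the formula together with the fact that $c_<$ vanishes on non-distinct triples. Notably, no compactness, nets, or facts about the Dedekind--MacNeille completion enter; the whole argument lives at the level of the product topologies on $\{0,\pm1\}^{G^3}$ and $\{0,\pm1\}^{H^3}$.
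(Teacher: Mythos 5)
Your proof is correct and is essentially the same as the paper's: both check that the preimage of each subbasic open set $U_{(g_1,g_2,g_3)}^i$ is either a subbasic open set $U_{(q(g_1),q(g_2),q(g_3))}^i$ of $\mathrm{CO}(H)$ (when the images under $q$ are pairwise distinct) or all of $\mathrm{CO}(H)$ or empty (otherwise, since the value is then independent of $c$). In fact you state the dichotomy in the constant case the right way round ($\mathrm{CO}(H)$ when the constant value equals $i$, empty otherwise), which the paper's write-up inadvertently reverses.
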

\begin{proof}
Consider a basic open set $U_t^i$ in $\mathrm{CO}(G)$, where $t = (g_1, g_2, g_3)$ is a triple of distinct elements of $G$.  We consider three cases.
  
\noindent \textbf{Case 1.} If $q(g_1), q(g_2), q(g_3)$ are all distinct, then $\phi_<^{-1}(U_t^i) = U_{q(t)}^i$, where $q(t) = (q(g_1), q(g_2), q(g_3))$.

\noindent \textbf{Case 2.} If $q(g_1) = q(g_2) \neq q(g_3)$, then $\phi_<^{-1}(U_t^i) = \emptyset$ if $c_<(g_2^{-1}g_1,id,g_1^{-1}g_2) = i$, and $\phi_<^{-1}(U_t^i) = \mathrm{CO}(H)$ otherwise.

\noindent \textbf{Case 3.} If $q(g_1) = q(g_2) = q(g_3)$, then $\phi_<^{-1}(U_t^i) = \emptyset$ if $c_<(id,g_1^{-1}g_2,g_1^{-1}g_3) = i$, and $\phi_<^{-1}(U_t^i) = \mathrm{CO}(H)$ otherwise.

In any event, the preimage of every subbasic open set is open, so $\phi_<$ is continuous.
\end{proof}

\label{main_proof}

\begin{proof}[Proof of Theorem \ref{Thm2}]
Let $G$ be a circularly orderable group and $S \subset \mathrm{CO}(G)$ a semiconjugacy class.
 Let $M$ be a minimal invariant set of the $G$-action on $S$, such a set exists since $S$ is both compact and $G$-invariant by Proposition \ref{compact_invariant}. If $M$ is infinite, then $M$ has no isolated points since every orbit in $M$ is dense, so $M$ is homeomorphic to the Cantor set and is uncountable.

On the other hand, suppose that $M$ is finite.  Then the stabilizer of any $c \in M$ under the $G$-action is a finite-index subgroup of $G$, and so by \cite[Corollary 2.13]{CMR}, the linear part $H$ of $(G,c)$ is normal and the natural ordering inherited by $G/H$ is Archimedean.  In particular, $G/H$ is order-isomorphic to a subgroup $C$ of $S^1$ with its usual ordering, with $\mathrm{rot}_c: G/H \rightarrow C$ serving as the isomorphism \cite[Corollary 2.12]{CMR}.  Thus $G$ sits in a short exact sequence
\[ 1 \rightarrow H \rightarrow G \stackrel{\mathrm{rot}_c}{\longrightarrow} C \rightarrow 1,
\]
and by Proposition \ref{prop convex}, every circular ordering of $G$ arising lexicographically from this short exact sequence lies in $S$.  

Now if $H$ admits infinitely many left-orderings, then it admits uncountably many left-orderings \cite{Li}, hence there are infinitely many circular orderings that arise lexicographically from the short exact sequence above.  Thus $S$ is uncountable in this case.  Otherwise $H$ admits only finitely many left-orderings, so $H$ is a Tararin group $T_k$ admitting exactly $2^k$ left-orderings for some $k \geq 1$.  Then by fixing the circular ordering of $C$ and changing the left-ordering of $T_k$ we can lexicographically construct exactly $2^k$ circular orderings on $G$ that lie in $S$.  Let $S'$ denote the set of all orderings constructed in this way.  

On the other hand, suppose that $d \in S$ is a circular ordering of $G$.  Then given $c \in S'$, since $d$ is semiconjugate to $c$ we have $\mathrm{rot}_c = \mathrm{rot}_d$ by Proposition \ref{semi classes}.  Moreover $\mathrm{rot}_d$  is order-preserving.  Thus $d$ arises lexicographically from the same short exact sequence as $c$, meaning $d \in S'$.  We conclude $S = S'$ and so $S$ is either uncountable or $|S| = |\mathrm{LO}(T_k)| = 2^k$.  

Next, suppose that $\mathrm{CO}(G)$ is not finite, so that either $C$ is infinite, or $C$ is finite and the action of $C$ on $T_k/T_{k-1}$ is trivial.  We must show that every ordering in $S$ is not isolated.
First, if $C$ is infinite, then by \cite[Theorem B]{CMR} the space of circular orderings of $C$ is homeomorphic to a Cantor set.  It follows from Lemma \ref{cts extension} that every ordering of $G$ that lies in $S$ is not isolated, since it arises from the short exact sequence \[ 1 \rightarrow T_k \rightarrow G \stackrel{\mathrm{rot}}{\longrightarrow} C \rightarrow 1.
\]

On the other hand, if $C$ is finite (hence cyclic), and if the action of $C$ on $T_k/T_{k-1}$ is trivial, then $G/T_{k-1}$ is an infinite abelian group.  Then the orderings of $S$ can be approximated by changing the circular ordering on $G/T_{k-1}$ and again applying Lemma \ref{cts extension},  since $\mathrm{CO}(G/T_{k-1})$ is homeomorphic to a Cantor set.  

\end{proof}

There is an immediate corollary of this result which may be of some independent interest.

\begin{corollary}
Every circular ordering of a group $G$ is semiconjugate to a non-isolated circular ordering, unless $\mathrm{CO}(G)$ is finite.
\end{corollary}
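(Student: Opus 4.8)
The plan is to read this off directly from Theorem \ref{Thm2} together with the compactness of semiconjugacy classes. Fix a circular ordering $c$ of $G$, assume $\mathrm{CO}(G)$ is infinite, and let $S \subset \mathrm{CO}(G)$ be the semiconjugacy class containing $c$. Applying Theorem \ref{Thm2} to $S$, the alternative described in item (2) is excluded, since it forces $\mathrm{CO}(G)$ to be finite. Hence exactly one of the following holds: $S$ is uncountable, or $|S| = 2^k$ and item (1) holds.

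If item (1) holds, there is nothing to prove: by hypothesis no circular ordering in $S$ is isolated in $\mathrm{CO}(G)$, so in particular $c$ itself is non-isolated, and $c$ is semiconjugate to itself.

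If instead $S$ is uncountable, I would first invoke Proposition \ref{compact_invariant} to note that $S$ is a compact subset of $\mathrm{CO}(G)$. A topological space in which every point is isolated is discrete, and a discrete compact space is finite; since $S$ is infinite, it must therefore contain a point $d$ that is not isolated in the subspace $S$. Now observe that if $d$ were isolated in $\mathrm{CO}(G)$, then $\{d\}$ would be open in $\mathrm{CO}(G)$, hence $\{d\} = \{d\} \cap S$ would be open in $S$, contradicting the choice of $d$; so $d$ is non-isolated in $\mathrm{CO}(G)$. Since $c$ and $d$ both lie in $S$, they are semiconjugate, and this completes the argument.

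There is essentially no real obstacle here; the corollary is a repackaging of Theorem \ref{Thm2}. The only step deserving a moment's care is the passage from ``non-isolated in $S$'' to ``non-isolated in $\mathrm{CO}(G)$,'' which uses nothing beyond the definition of the subspace topology, and the observation that one genuinely needs $S$ to be infinite (and compact) so that this point is produced inside $S$ rather than the conclusion being vacuous.
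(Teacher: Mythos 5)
Your proof is correct. The paper states this corollary without proof, describing it as an immediate consequence of Theorem \ref{Thm2}; your argument supplies exactly the details one would want, and the only step with real content---producing a non-isolated point of $\mathrm{CO}(G)$ inside an uncountable semiconjugacy class by combining compactness of $S$ (Proposition \ref{compact_invariant}) with the observation that isolation in $\mathrm{CO}(G)$ implies isolation in the subspace $S$---is handled correctly.
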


\section{Left-orderings as circular orderings}

\label{genuine}

In this section we will study the subspace of secret left-orderings, and the closure of its complement. Following Baik and Samperton \cite{BS}, define $\mathrm{CO}_g(G)$ to be $\mathrm{CO}(G)\setminus \iota(\mathrm{LO}(G))$. The elements of $\mathrm{CO}_g(G)$ are called \textit{genuine} circular orderings of $G$, by Corollary \ref{LOG_zero} they are precisely the circular orderings $c$ for which either $\mathrm{rot}_c$ or $\tau_c$ is not the zero function.

We begin by recalling a standard construction.

\begin{construction}
\label{quotient}
Suppose we are given a group $G$ with a left-ordering $<$, and that $z \in G$ is central and cofinal with respect to $<$.   Define a circular ordering $c$ on $G/\langle z \rangle$ as follows:  for every $g\langle z \rangle \in G/\langle z \rangle$, define the {\em minimal representative} of $g\langle z \rangle$ to be the unique $\overline{g} \in g\langle z \rangle$ satisfying that $id \leq \overline{g} <z$. Then set 
$$c(g_1\langle z \rangle, g_2\langle z \rangle,g_3\langle z \rangle)=sign(\sigma),$$
where $\sigma$ is the unique permutation in $S_3$ such that $\overline{g_{\sigma(1)}}<\overline{g_{\sigma(2)}}<\overline{g_{\sigma(3)}}$.  
\end{construction}

\begin{theorem}
\label{approx theorem}
Suppose that $(H, <)$ is a left-ordered group with positive cofinal central element $z \in H$. Suppose further that $\phi: L \rightarrow H$ is an embedding of a group $L$ into $H$ such that $\phi(L)$ is cofinal.  Then the secret left-ordering $c_{<^{\phi}}$ of $L$ is an accumulation point of $\mathrm{CO}_g(L)$.
\end{theorem}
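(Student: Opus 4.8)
The plan is to produce an explicit sequence $(c_n)_{n\geq 1}$ of \emph{genuine} circular orderings of $L$ with $c_n\to c_{<^{\phi}}$; since $c_{<^{\phi}}\notin\mathrm{CO}_g(L)$, this exhibits $c_{<^{\phi}}$ as an accumulation point of $\mathrm{CO}_g(L)$. First I would reduce to the case $\phi(L)\cap\langle z\rangle=\{id\}$. If this fails, replace the data $(H,<,z,\phi)$ by $(H\times\Z,\prec,(z,1),\phi')$, where $\prec$ is the lexicographic order on $H\times\Z$ with the $H$-coordinate dominant and $\phi'(g)=(\phi(g),0)$. One checks that $(z,1)$ is still positive, central and cofinal, that $\phi'(L)$ is still cofinal, that $\phi'(L)\cap\langle(z,1)\rangle=\{(id,0)\}$, and --- the key point --- that the left-ordering of $L$ induced by $\phi'$ coincides with that induced by $\phi$, so $c_{<^{\phi'}}=c_{<^{\phi}}$. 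Hence we may assume $\phi(L)\cap\langle z^n\rangle=\{id\}$ for every $n\geq 1$.

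Under this assumption, for each $n\geq 1$ let $\bar c_n$ be the circular ordering on $H/\langle z^n\rangle$ produced by Construction \ref{quotient} applied to $(H,<)$ with the positive cofinal central element $z^n$, and let $\phi_n : L\to H/\langle z^n\rangle$ be the composite of $\phi$ with the quotient map. Since $\phi(L)\cap\langle z^n\rangle=\{id\}$, the map $\phi_n$ is injective, so $c_n:=\bar c_n\circ(\phi_n\times\phi_n\times\phi_n)$ is a circular ordering of $L$. (Heuristically, $c_n$ is the left-ordering $<^{\phi}$ ``wrapped $n$ times around a circle''.)

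For convergence it suffices to show that for every finite $F\subseteq L$ there is an $N$ such that $c_n$ and $c_{<^{\phi}}$ agree on every triple of distinct elements of $F$ whenever $n>N$. Choose $N$ with $\phi(g)\in(z^{-N},z^N)$ for all $g\in F$ (possible since $\langle z\rangle$ is cofinal), and take $n>2N$. The minimal representatives modulo $z^n$, lying in $[id,z^n)$, of the elements $\phi(g)$, $g\in F$, fall into two blocks: those with $\phi(g)\geq id$ are their own minimal representatives and lie in $[id,z^N)$, while those with $\phi(g)<id$ have minimal representative $\phi(g)z^n\in(z^{n-N},z^n)$. Since $n>2N$ these blocks are separated, and within each block the order of the minimal representatives is the order of the corresponding $\phi(g)$ in $H$. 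Therefore, on $F$, the circular ordering $c_n$ is obtained from the linear order $<^{\phi}$ by cyclically moving the ``negative'' block past the ``positive'' one --- an operation that leaves the induced circular ordering unchanged. Hence $c_n$ lies in the open neighbourhood of $c_{<^{\phi}}$ consisting of all circular orderings agreeing with $c_{<^{\phi}}$ on triples from $F$, and since such neighbourhoods form a neighbourhood basis at $c_{<^{\phi}}$, we conclude $c_n\to c_{<^{\phi}}$.

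It remains to show $c_n$ is genuine for all large $n$, and this is the main obstacle: one must check that wrapping $<^{\phi}$ around a sufficiently large circle really does destroy the secret-left-ordering property. Since $\phi(L)$ is cofinal, fix $g_*\in L$ with $\phi(g_*)>z$; then $g_*\neq id$, and $L$ is torsion-free, being left-orderable. Suppose for contradiction that $c_n$ is the secret left-ordering of a left-ordering of $L$; then $g\in L$ is positive in it iff $c_n(g^{-1},id,g)=1$. A short computation with minimal representatives gives $c_n(g^{-1},id,g)=1$ when $id<\phi(g)<z^n$ and $\phi(g)^2<z^n$, and $c_n(g^{-1},id,g)=-1$ when $id<\phi(g)<z^n$ and $\phi(g)^2>z^n$. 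For $n$ large, $\phi(g_*)$ satisfies the first pair of inequalities, so $g_*$ is positive; but if $K=K(n)$ is the largest exponent with $\phi(g_*)^K<z^n$ (so $K\geq 1$ for $n$ large, and $\phi(g_*)^{2K}>z^n$ since $2K\geq K+1$), then $g_*^K$ satisfies the second pair, hence is negative --- contradicting that $g_*$ is positive. Thus $c_n\in\mathrm{CO}_g(L)$ for all large $n$, and together with the previous paragraph every neighbourhood of $c_{<^{\phi}}$ contains a point of $\mathrm{CO}_g(L)$ distinct from $c_{<^{\phi}}$. A cleaner route to genuineness uses the standard identification $\widetilde{(H/\langle z^n\rangle)}_{\bar c_n}\cong H$ carrying $z_{\bar c_n}$ to $z^n$: it yields $\mathrm{rot}_{c_n}(g_*)=\tfrac1n\,t\pmod{\Z}$, where $t\geq 1$ is the translation number of $\phi(g_*)$ relative to $z$, which is nonzero for $n>t$, so $c_n\notin\iota(\mathrm{LO}(L))$ by Corollary \ref{LOG_zero}.
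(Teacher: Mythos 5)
Your proof is correct, and while it follows the same basic blueprint as the paper's --- approximate $c_{<^{\phi}}$ by the circular orderings obtained from Construction \ref{quotient} applied to $H$ with the cofinal central elements $z^n$, then certify that these are genuine --- it differs in two substantive ways. First, the paper never assumes $\phi(L)\cap\langle z\rangle$ is trivial: it lifts the quotient circular ordering on $H/\langle z^n\rangle$ lexicographically to a circular ordering $d_n$ on all of $H$ and then restricts to $\phi(L)$, so injectivity of $L\to H/\langle z^n\rangle$ is never an issue. Your reduction to the case $\phi(L)\cap\langle z\rangle=\{id\}$ via $H\times\Z$ with the lexicographic order is a clean alternative that lets you pull back the quotient ordering directly; in the reduced situation your $c_n$ coincides with the paper's restricted $d_n$, and your ``cyclic rotation of a finite linear order'' argument for convergence is the same computation as the paper's three-case sign analysis, packaged more efficiently. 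Second, and more interestingly, your certificate of genuineness is different: the paper computes $\mathrm{rot}_{d_n}(g_*)=s/n$ for a cofinal $g_*\in\phi(L)$, which simultaneously shows the restrictions are pairwise distinct and, via Corollary \ref{LOG_zero}, genuine; your primary argument is purely order-theoretic --- if $c_n$ were a secret left-ordering then $g_*$ would be positive while $g_*^{K(n)}$ would be negative --- and avoids rotation numbers and Section \ref{semiconjugacy_section} altogether. That argument is sound: the key inequalities $\phi(g_*)^{K}<z^n<\phi(g_*)^{K+1}\leq\phi(g_*)^{2K}$ all check out, with $\phi(g_*)^{K+1}\neq z^n$ guaranteed by your reduction, and your observation that $c_{<^{\phi}}\notin\mathrm{CO}_g(L)$ means you never need the $c_n$ to be pairwise distinct, only genuine, which is a genuine simplification over the paper's ``the set $\{\psi(d_n)\}$ is infinite'' step. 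Your sketched second route to genuineness via $\mathrm{rot}_{c_n}(g_*)=t/n$ is essentially the paper's argument.
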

\begin{proof}
We first construct a sequence of genuine circular orderings in $\mathrm{CO}_g(H)$ converging to $c_<$.  To do this, we first let $c_n$ be the circular order on $H/\langle z^n\rangle$ defined as in Construction \ref{quotient}.  That is, for any $h\in H$, let $[h]_n$ be the unique coset representative of $h\langle z^n\rangle$ with $id\leq   [h]_n <z^n$, define  $c_n(g_1\langle z^n\rangle, g_2\langle z^n\rangle, g_3\langle z^n\rangle)={\rm sign}(\s)$ where $\s$ is the unique permutation such that $[g_{\s(1)}]_n<[g_{\s(2)}]_n<[g_{\s(3)}]_n$.\footnote{For this proof alone, the notation $[ \cdot]_n$ will mean a particular choice of coset representative whenever the subscript is an integer.}  Now for every positive integer $n$, define a circular order $d_n$ on $H$ lexicographically by equipping $H/\langle z^n\rangle$ with $c_n$, the subgroup $\langle z^n \rangle$ with the unique ordering in which $z^n >1$, and using the short exact sequence  $$1\longrightarrow \langle z^n\rangle\longrightarrow H\stackrel{q_n}{\longrightarrow}{H}/{\langle z^n\rangle}\longrightarrow 1.$$

We check that the sequence $\{ d_n \}_{n=1}^{\infty}$ converges in $\mathrm{CO}(H)$ to $c_<$ as follows.  Consider a subbasic open set $$U_t^{c_<(t)}=\{d \in {\rm CO}(H) \mid  d(t)=c_<(t)\}$$
where $t=(g_1, g_2, g_3)\in H^3$ with all $g_i$ distinct.   We will deal only with the case of $c_<(t) = + 1$, and we fix a permutation $\s$ satisfying $g_{\s(1)} <g_{\s(2)}<g_{\s(3)}$ whose sign is therefore $+1$, the other cases are similar.  First observe that by choosing $N_1$ sufficiently large, one may assume that $q_n(g_i)$ are all distinct for $n>N_1$, so that $d_n(g_1, g_2, g_3)=c_n(q_n(g_1), q_n(g_2), q_n(g_3))$.  We now consider cases.

\noindent \textbf{Case 1.} Suppose either $g_i >id$ for $i=1,2,3$ or $g_i < id$ for $i=1,2,3$.  We will address the case of $id < g_i$ for $i=1,2,3$, the other case being almost identical.  In this case, choose $N_2$ such that $z^{N_2} > \max\{g_1, g_2, g_3\}$.  Then $[g_i]_n = g_i$ for $i = 1, 2, 3$ whenever $n>N_2$ and therefore 
\[ c_n(q_n(g_1), q_n(g_2), q_n(g_3)) = \mathrm{sign}(\s). 
\]
Thus $n \geq \max\{N_1, N_2\}$ implies $d_n \in U_t^{1}$.

\noindent \textbf{Case 2.} Suppose $g_{\s(1)} < id < g_{\s(2)} < g_{\s(3)}$.  Choose $N_3$ such that $g_{\s(1)}z^{N_3} > g_{\s(3)}$.  Then for $n > N_3$, $[g_{\s(1)}]_n = g_{\s(1)}z^n$, $[g_{\s(2)}]_n = g_{\s(2)}$ and $[g_{\s(3)}]_n = g_{\s(3)}$.  Then 
\[c_n(q_n(g_1), q_n(g_2), q_n(g_3)) = \mathrm{sign}(\tau)
\]
where $\tau$ is the permutation $\tau(1) = \s(2), \tau(2) = \s(3), \tau(3) = \s(1)$.  Thus $\tau = \s \circ \beta$, where $\beta = (1 \hspace{1em} 2 \hspace{1em} 3)$.  Therefore $\mathrm{sign}(\tau) = \mathrm{sign}(\s)$ =1.  We conclude that $n \geq \max\{N_1, N_3\}$ implies $d_n \in U_t^{1}$.

\noindent \textbf{Case 3.} Suppose $g_{\s(1)}  < g_{\s(2)}< id < g_{\s(3)}$.  Choose $N_4$ such that $g_{\s(1)}z^{N_4} > g_{\s(3)}$, then for $n>N_4$ we have $[g_{\s(1)}]_n = g_{\s(1)}z^n$, $[g_{\s(2)}]_n = g_{\s(2)}z^n$ and $[g_{\s(3)}]_n = g_{\s(3)}$.  We conclude as before that 
\[c_n(q_n(g_1), q_n(g_2), q_n(g_3)) = \mathrm{sign}(\tau)
\]
where $\tau(1) = \s(3), \tau(2) = \s(1), \tau(3) = \s(2)$, so $\mathrm{sign}(\tau) = \mathrm{sign}(\s)=1$.  We conclude as in the previous case that
whenever $n \geq \max\{N_1, N_4\}$,  $d_n \in U_t^{1}$.

Thus $\{d_n\}_{n=1}^{\infty}$ converges to $c_<$ in $\mathrm{CO}(H)$.  Now note that there is a continuous map $\psi: \mathrm{CO}(H) \rightarrow \mathrm{CO}(L)$ given by $\psi(c) = \phi^*c$, where $\phi^*c$ is the circular ordering of $L$ defined by
\[ \phi^*c(h_1, h_2, h_3) = c(\phi(h_1), \phi(h_2), \phi(h_3)).
\]  The map $\psi$ also satisfies $\psi(c_<) = c_{<^{\phi}}$.  Thus, to prove the proposition it suffices to show that the set $\{\psi(d_n)\}_{n=1}^{\infty}$ is infinite.  To do this, it suffices to check that the restriction orderings $r(d_n) = d_n|_{\phi(L)}$ provide infinitely many distinct circular orderings of $\phi(L)$, since the map $d_n|_{\phi(L)} \mapsto \phi^*d_n$ is injective.  We prove this claim below.

To this end, choose $g \in \phi(L)$ with $z<g$, this is possible since $\phi(L)$ is cofinal.  We will establish the claim by showing that the set $\{ \mathrm{rot}_{d_n}(g) \}_{n=1}^{\infty}$ is infinite.  Given $k, n >0$ use $a_{k,n}$ to denote the unique integer such that $(z^n)^{a_{k,n}} \leq g^k < (z^n)^{a_{k,n}+1}$.  
From our constructions, $\mathrm{rot}_{d_n}(g) = \mathrm{rot}_{c_n}(g\langle z^n \rangle)$, so we need to consider a lift of $g\langle z^n \rangle$ in $ \widetilde{H/\langle z^n \rangle}_{c_n}$.  Thankfully there is an order isomorphism 
\[ \Phi_n :  (H, <) \rightarrow (\widetilde{H/\langle z^n \rangle}_{c_n}, <_{c_n})
\]
which we can describe as follows.  We first observe that every element of $(H, <)$ can be written uniquely as $[g]_n (z^n)^{\ell}$ where $\ell \in \mathbb{Z}$; the isomorphism is then $\Phi_n ([g]_n (z^n)^{\ell}) = (g\langle z^n \rangle, \ell)$  \cite[Proposition 2.9]{CG}.  Choose $M$ such that $z^M >g$, then when $n \geq M$ we have $[g]_n = g$, so for arbitrary $k>0$ the isomorphism $\Phi_n$ yields
\[ (id, \ell) \leq_{c_n} (g\langle z^n\rangle, 0)^k <_{c_n} (id, \ell +1) \mbox{ if and only if } (z^n)^{\ell} \leq g^k < (z^n)^{\ell+1}.
\]
We conclude that $[(g\langle z^n \rangle, 0)^k]_{c_n} = a_{k,n}$ for all $k>0$ and $n>M$.
Thus we may choose the lift $(g\langle z^n \rangle, 0)$ of $g\langle z^n \rangle$ to compute $\mathrm{rot}_{c_n}(g\langle z^n \rangle)$, and begin by computing
  \[ \widetilde{\mathrm{rot}}_{c_n}((g\langle z^n \rangle, 0)) = \lim_{k \to \infty}\frac{[(g\langle z^n \rangle, 0)^k]_{c_n}}{k} = \lim_{k \to \infty}\frac{a_{k,n}}{k}.
  \]

From our definitions, observe that $a_{k,n} = \lfloor \frac{a_{k,1}}{n} \rfloor$.  Further since $z<g$ we have $z^n < g^n$, so $a_{k,1}>k$ and $\lim \frac{a_{k,1}}{k} = s  \geq 1$.  Thus
\[ \widetilde{\mathrm{rot}}_{c_n}((g\langle z^n \rangle, 0)) = \lim_{k \to \infty}\frac{\lfloor \frac{a_{k,1}}{n} \rfloor}{k} =\frac{s}{n} > 0,
\]
and so $\mathrm{rot}_{d_n}(g) =  \frac{s}{n} \mod \mathbb{Z}$.   Choose $N$ such that $\frac{s}{N}<1$, so that for $n >N$ we have $0<\frac{s}{n}<1$ and therefore $ \frac{s}{n} \mod \mathbb{Z} = \frac{s}{n}$.  It follows that whenever $n, m >N$, $\mathrm{rot}_{d_n}(g)$ and $\mathrm{rot}_{d_m}(g)$ are distinct.  This proves the claim.
\end{proof}

\begin{proposition}
\label{genuine closure}
Suppose that $G$ is a group, and that $(H, <)$ is a left-ordered group admitting a cofinal central element and that $\phi : G \rightarrow H$ is a homomorphism whose image is cofinal.

If $\prec$ is a lexicographic left-ordering of $G$ that arises from the short exact sequence 
\[ 1 \rightarrow \ker \phi \rightarrow G \rightarrow \phi(G) \rightarrow 1
\]
where $\phi(G)$ is equipped with the restriction of $<$, then $c_{\prec}$ is an accumulation point of $\mathrm{CO}_g(G)$.
\end{proposition}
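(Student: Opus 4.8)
The plan is to reduce the statement to Theorem \ref{approx theorem} by handling the kernel of $\phi$ through a lexicographic extension map. We may assume $\ker \phi$ is left-orderable, since otherwise no lexicographic left-ordering $\prec$ of the stated form exists and there is nothing to prove; write $K = \ker \phi$ and fix the left-ordering $<_K$ of $K$ out of which $\prec$ is built (together with the restriction of $<$ to $\phi(G)$). Replacing the given cofinal central element of $H$ by its inverse if necessary, we may take it to be positive and call it $z$. The inclusion $\phi(G) \hookrightarrow H$ is then an embedding whose image is cofinal, so Theorem \ref{approx theorem}, applied with $L = \phi(G)$ and this embedding, shows that the secret left-ordering $c_{<|_{\phi(G)}}$ of $\phi(G)$ is an accumulation point of $\mathrm{CO}_g(\phi(G))$; in particular $c_{<|_{\phi(G)}}$ lies in the closure of $A := \mathrm{CO}_g(\phi(G)) \setminus \{c_{<|_{\phi(G)}}\}$.

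Next I would introduce the lexicographic extension map $\Phi : \mathrm{CO}(\phi(G)) \to \mathrm{CO}(G)$ attached to the short exact sequence $1 \to K \to G \xrightarrow{\overline{\phi}} \phi(G) \to 1$ and the left-ordering $<_K$, where $\overline{\phi}$ denotes $\phi$ with codomain restricted to $\phi(G)$. By Lemma \ref{cts extension}, $\Phi$ is continuous. It is injective: for any triple $(g_1,g_2,g_3)$ with $\overline{\phi}(g_1), \overline{\phi}(g_2), \overline{\phi}(g_3)$ pairwise distinct one has $\Phi(e)(g_1,g_2,g_3) = e(\overline{\phi}(g_1), \overline{\phi}(g_2), \overline{\phi}(g_3))$, so $e$ can be recovered from $\Phi(e)$ using that $\overline{\phi}$ is surjective. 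Finally, comparing the formula in Lemma \ref{cts extension} with the definition of the secret left-ordering of a lexicographic left-ordering, one checks directly that $\Phi(c_{<|_{\phi(G)}}) = c_{\prec}$.

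The crucial step — and the one I expect to require the most care — is to show that $\Phi$ sends genuine circular orderings to genuine circular orderings, that is, $\Phi(\mathrm{CO}_g(\phi(G))) \subseteq \mathrm{CO}_g(G)$. I would establish this by proving that for every $e \in \mathrm{CO}(\phi(G))$ and all $g,h \in G$,
\[ \mathrm{rot}_{\Phi(e)}(g) = \mathrm{rot}_{e}(\overline{\phi}(g)) \qquad\text{and}\qquad \tau_{\Phi(e)}(g,h) = \tau_{e}(\overline{\phi}(g), \overline{\phi}(h)). \]
By construction $K$ is a convex subgroup of $(G, \Phi(e))$, and the circular ordering it induces on $G/K \cong \phi(G)$ is precisely $e$. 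Granting this, the two identities follow from exactly the computations in the proof of Proposition \ref{prop convex}: the relevant integers $[\,\cdot\,]_c$ from the definitions of rotation and translation numbers are controlled by cocycle values $f_{\Phi(e)}(g^i,g)$, which coincide with the corresponding values $f_e(\overline{\phi}(g)^i, \overline{\phi}(g))$ because $\Phi(e)(id,g^i,g^{i+1}) = e(id, \overline{\phi}(g)^i, \overline{\phi}(g)^{i+1})$ whenever the images are distinct; elements of $K$ give rotation number $0$ on both sides, and the case in which $\overline{\phi}(g)$ has finite order is handled exactly as the corresponding case of Proposition \ref{prop convex}, contributing only a discrepancy bounded independently of $n$ and so vanishing in the limit defining $\mathrm{rot}$ and $\tau$. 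With the identities in hand, if $e \in \mathrm{CO}_g(\phi(G))$ then by Corollary \ref{LOG_zero} either $\mathrm{rot}_e$ or $\tau_e$ is nonzero; choosing a witnessing element (or pair) in $\phi(G)$ and a preimage under the surjection $\overline{\phi}$ yields a nonzero value of $\mathrm{rot}_{\Phi(e)}$ or $\tau_{\Phi(e)}$, hence $\Phi(e) \in \mathrm{CO}_g(G)$, again by Corollary \ref{LOG_zero}.

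Finally I would assemble the pieces. By continuity of $\Phi$ we have $c_{\prec} = \Phi(c_{<|_{\phi(G)}}) \in \Phi(\overline{A}) \subseteq \overline{\Phi(A)}$, while injectivity of $\Phi$ together with $\Phi(c_{<|_{\phi(G)}}) = c_{\prec}$ and $\Phi(\mathrm{CO}_g(\phi(G))) \subseteq \mathrm{CO}_g(G)$ gives $\Phi(A) \subseteq \mathrm{CO}_g(G) \setminus \{c_{\prec}\}$. Hence $c_{\prec} \in \overline{\mathrm{CO}_g(G) \setminus \{c_{\prec}\}}$, i.e. $c_{\prec}$ is an accumulation point of $\mathrm{CO}_g(G)$. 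The only genuinely delicate ingredient is the pair of identities for rotation and translation numbers under the lexicographic extension map, and even these are essentially a rerun of the argument already given for Proposition \ref{prop convex}; everything else is formal.
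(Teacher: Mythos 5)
Your proposal is correct and follows essentially the same route as the paper's proof: pull a net/sequence of genuine circular orderings of $\phi(G)$ converging to $c_{<|_{\phi(G)}}$ from Theorem \ref{approx theorem}, push it forward through the lexicographic extension map (continuous by Lemma \ref{cts extension}), and observe that genuineness is preserved because $\mathrm{rot}$ and $\tau$ factor through $\overline{\phi}$. The extra details you supply (injectivity of $\Phi$, and the justification of the identity $\mathrm{rot}_{\Phi(e)}(g)=\mathrm{rot}_e(\overline{\phi}(g))$ via convexity of $\ker\phi$ and the computations of Proposition \ref{prop convex}) are correct elaborations of steps the paper states without proof.
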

\begin{proof}
Let $d : \phi(G)^3 \rightarrow \{\pm 1, 0\}$ denote the secret left-ordering of $\phi(G)$ arising from restriction of $<$ to $\phi(G)$.  By Theorem \ref{approx theorem}, we can choose a sequence of genuine circular orderings $\{d_n\}_{n=1}^{\infty}$ in $\mathrm{CO}_g(\phi(G))$ such that $\lim_{n \to \infty} d_n = d$.

Lexicographically create a sequence of circular orderings $\{c_n \}_{n=1}^{\infty}$ of $G$ by using the restriction of $\prec$ to left-order $\ker \phi$, and using $d_n$ to order the quotient $\phi(G)$.  
The circular orderings $c_n$ are all genuine circular orderings of $G$, because $\mathrm{rot}_{c_n}(g) = \mathrm{rot}_{d_n}(\phi(g))$ for all $g \in G$.  By Lemma \ref{cts extension}, the sequence $\{c_n \}_{n=1}^{\infty}$ converges to $c_{\prec}$.
\end{proof}

\begin{corollary}
For the following groups, $\overline{\mathrm{CO}_g(G)} = \mathrm{CO}(G)$.  In particular, whenever such a group is left-orderable, every left-ordering is an accumulation point of genuine circular orderings.
\begin{enumerate}
\item Fundamental groups of closed Seifert fibred manifolds with base orbifold $S^2(\alpha_1, \dots, \alpha_n)$.
\item Braid groups.
\item Finitely generated nilpotent groups.
\end{enumerate}
\end{corollary}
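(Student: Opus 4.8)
The plan is to reduce everything to Proposition \ref{genuine closure}. Since $\mathrm{CO}(G)$ is the disjoint union of $\mathrm{CO}_g(G)$ and $\iota(\mathrm{LO}(G))$, the statement $\overline{\mathrm{CO}_g(G)} = \mathrm{CO}(G)$ is equivalent to the assertion that every secret left-ordering lies in $\overline{\mathrm{CO}_g(G)}$; if $G$ is not left-orderable there is nothing to prove, so I fix a left-ordering $\prec$ of $G$ and aim to show $c_\prec$ is an accumulation point of $\mathrm{CO}_g(G)$. By Proposition \ref{genuine closure} this follows once $\prec$ is exhibited as a lexicographic left-ordering arising from a short exact sequence $1 \to \ker\phi \to G \to \phi(G) \to 1$ for some homomorphism $\phi\colon G \to H$ with $H$ left-ordered, $\phi(G)$ cofinal in $H$, and $H$ possessing a cofinal central element. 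I will invoke two sufficient situations: \textbf{(a)} $\prec$ admits a cofinal central element $z \in G$, in which case one takes $H = G$ and $\phi = \mathrm{id}$; or \textbf{(b)} $\prec$ admits a proper convex subgroup $N$ that is \emph{normal} in $G$ with $G/N$ Archimedean, in which case $G/N$ is abelian by H\"older's theorem, each of its nonzero elements is simultaneously central and cofinal, $\prec$ is the lexicographic ordering assembled from $\prec|_N$ and the quotient ordering of $G/N$, and one takes $H = G/N$ with $\phi$ the quotient map. It then remains to place, for each of the three families and each left-ordering $\prec$, into situation (a) or (b).

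\emph{Finitely generated nilpotent groups.} If $G$ has torsion it is not left-orderable, so assume $G$ is torsion-free nilpotent. The key structural input is that every left-ordering of a finitely generated nilpotent group is Conradian, so that the convex subgroups of $(G,\prec)$ form a tower whose jumps are normal with rank-one abelian quotients. Because $G$ is finitely generated, the union of all proper convex subgroups is again a proper convex subgroup, hence is the maximal proper convex subgroup $N$; by the Conradian property $N \trianglelefteq G$, and since there is no convex subgroup strictly between $N$ and $G$, the quotient $G/N$ has no proper nontrivial convex subgroup and is therefore Archimedean. This is situation (b).

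\emph{Braid groups and Seifert fibred groups.} For $B_n$ the center is $\langle\Delta^2\rangle$, with $\Delta^2$ the full twist; I claim $\Delta^2$ is cofinal in $\prec$, which is situation (a). Let $C$ be the smallest convex subgroup of $(B_n,\prec)$ containing $\Delta^2$; I must show $C = B_n$. If $t \in B_n$ has $t^k = \Delta^{2j}$ for some $k \neq 0$, then $\Delta^{2j} \in C$, and---after replacing $t$ by $t^{-1}$ if necessary to arrange $t \succ 1$---the chain $1 \prec t \prec t^2 \prec \cdots \prec t^{|k|}$ together with convexity of $C$ forces $t \in C$. Now $B_n/\langle\Delta^2\rangle$ is a (pointed) mapping class group of a punctured sphere, hence is generated by elements of finite order; lifting such a generating set to $B_n$ and adjoining $\Delta^2$ produces a generating set of $B_n$ every member of which has a nonzero power in $\langle\Delta^2\rangle$ and so lies in $C$, whence $C = B_n$. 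The Seifert case is identical: $G$ contains a central cyclic subgroup $\langle h\rangle$ generated by a regular fibre, with $G/\langle h\rangle \cong \langle x_1,\dots,x_n \mid x_i^{\alpha_i} = 1,\ x_1\cdots x_n = 1\rangle$ and $G = \langle x_1,\dots,x_n,h\rangle$ subject to relations including $x_i^{\alpha_i} = h^{m_i}$; if $G$ has torsion it is not left-orderable, and otherwise each $m_i \neq 0$, so each generator $x_i$ (and $h$) has a nonzero power in $\langle h\rangle$ and thus lies in the smallest convex subgroup $C$ of $(G,\prec)$ containing $h$, forcing $C = G$ and hence $h$ cofinal---again situation (a).

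\textbf{Main obstacle.} The only non-routine points are the structural inputs that put every left-ordering of each family into situation (a) or (b): for nilpotent groups, that all left-orderings are Conradian (so the top convex jump is normal with Archimedean quotient); and for braid groups and Seifert groups, that $G/Z(G)$---namely $B_n/\langle\Delta^2\rangle$, respectively the base-orbifold quotient---is generated by elements of finite order, which is precisely what makes the central element cofinal in every left-ordering. Everything else is bookkeeping: checking that the chosen data $(\phi,H,z)$ really presents $\prec$ in the lexicographic form demanded by Proposition \ref{genuine closure}.
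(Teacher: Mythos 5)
Your proposal is correct and follows essentially the same route as the paper: reduce everything to Proposition \ref{genuine closure}, show the centre is cofinal in every left-ordering of $B_n$ and of the Seifert fibred groups by pulling each generator into the smallest convex subgroup containing the central element (using that each generator has a nonzero power in the centre), and use the Conradian structure of left-orderings of finitely generated nilpotent groups to produce the normal convex subgroup with Archimedean quotient. The only cosmetic difference is in the braid case, where you cite torsion generation of $B_n/\langle\Delta^2\rangle$ while the paper exhibits the explicit roots $\delta_n = \sigma_1\cdots\sigma_{n-1}$ and $\varepsilon_n = \sigma_1^2\sigma_2\cdots\sigma_{n-1}$ of $\Delta_n^2$, which are exactly the witnesses for your claim.
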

\begin{proof}
In each of the cases, if the group in question is not left-orderable then $\overline{\mathrm{CO}_g(G)} = \mathrm{CO}(G)$ is immediate.  We therefore only consider the left-orderable cases below.

Let $M$ be a Seifert fibred space as above, then 
\[ \pi_1(M) = \langle \gamma_1, \dots, \gamma_n, h \mid \mbox{$h$ central}, \gamma_i^{\alpha_i} = h^{\beta_i}, \gamma_1 \dots \gamma_n = h^b \rangle
\]
where $(\alpha_i, \beta_i)$ are pairs of relatively prime integers and $b \in \mathbb{Z}$.
Let $<$ be an arbitrary left-ordering of $\pi_1(M)$ and observe that $$C = \{ g \in \pi_1(M) \mid \exists k\in \mathbb{Z} \mbox{ such that } h^{-k} < g < h^k \}$$ is a subgroup of $\pi_1(M)$, and that $h$ is cofinal in $C$.   The relations $ \gamma_i^{\alpha_i} = h^{\beta_i}$ guarantee that $\gamma_i \in C$ for all $i$, so that $C = \pi_1(M)$ and the conclusion follows from Proposition \ref{genuine closure}.

Recall that the braid group $B_n$ has generators $\sigma_1, \ldots , \sigma_{n-1}$, and relations $\sigma_i \sigma_j = \sigma_j \sigma_i$ if $|i-j|>1$, and $\sigma_i \sigma_j \sigma_i = \sigma_j \sigma_i \sigma_j$ if $|i-j|=1$. The square of the Garside half-twist $\Delta_n^2$ is the generator of the centre of $B_n$.  As before, consider the subgroup $$C = \{ \beta \in B_n \mid \exists k\in \mathbb{Z} \mbox{ such that } (\Delta_n^2)^{-k} < \beta < (\Delta_n^2)^k \}.$$  The braids $\delta_n = \s_1 \s_2 \cdots \s_{n-1}$, and $\varepsilon_n = \s_1^2 \s_2 \cdots \s_{n-1}$, satisfy $\delta_n^n = \varepsilon_n^{n-1} = \Delta_n^2$, and are therefore both contained in $C$.  Therefore so is their product $\varepsilon_n\delta_n^{-1} = \sigma_1$.  For any $i=1, \cdots, n-1$, there exists a $k$ such that $\delta^k \s_1 \delta^{-k} = \s_i$, and thus $\s_i \in C$.  We conclude $B_n=C$ and apply Proposition \ref{genuine closure}.

If $G$ is a finitely generated torsion-free nilpotent group, then $G$ is left-orderable and every left-ordering is Conradian (see \cite{Au}).  It follows that every left-ordering is lexicographic relative to a short exact sequence
\[ 1 \rightarrow K \rightarrow G \rightarrow \mathbb{Z}^k \rightarrow 1
\]
for some $k \geq 1$, so Proposition \ref{genuine closure} applies to every left-ordering of such a group.
\end{proof}

At the other extreme, one might wonder if it is possible that $\iota(\mathrm{LO}(G)) \cap \overline{\mathrm{CO}_g(G)} = \emptyset$---that is, whether or not $\iota(\mathrm{LO}(G))$ can ever be an open subset of $\mathrm{CO}(G)$.  This can only happen, if ever, for a very special kind of group.    Recall that a subgroup $G$ of $\mathrm{Homeo}_+(\mathbb{R})$ is \emph{locally contracting} if for every $x \in \mathbb{R}$ there exists $y>x$ and a sequence of elements $\{g_n\}_{n=1}^{\infty}$ of $G$ such that $\lim_{n \to \infty} g_n(x) = \lim_{n \to \infty} g_n(y)$ exists.  A subgroup is called globally contracting if such a sequence of group elements exists for every interval $[x,y] \subset \mathbb{R}$. Following \cite[Proposition 3.5.20]{DNR} and \cite[Theorem 1]{Mal}, we can define three mutually exclusive types of left-orderings on a countable group $G$:
\begin{enumerate}
\item \textbf{Type I.} Left-orderings whose dynamical realization is semiconjugate to an action by translations.
\item \textbf{Type II.} Left-orderings whose dynamical realization is semiconjugate to some $\rho :G \rightarrow \widetilde{\mathrm{Homeo}_+}(S^1)$ whose image is a minimal, locally contracting subgroup.
\item \textbf{Type III.} Left-orderings whose dynamical realization is semiconjugate to some $\rho :G \rightarrow \widetilde{\mathrm{Homeo}_+}(S^1)$ whose image is globally contracting.
\end{enumerate}

In \cite{HLNR}, the authors call a finitely-generated group all of whose actions on $\mathbb{R}$ are type III a ``left-orderable monster", where they also prove the existence of such groups.

\begin{corollary}
If $G$ is a finitely generated left-orderable group such that $\iota(\mathrm{LO}(G)) \cap \overline{\mathrm{CO}_g(G)} = \emptyset$, then $G$ is a left-orderable monster.
\end{corollary}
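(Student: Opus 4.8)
The plan is to prove the contrapositive. The first point is that $\iota(\mathrm{LO}(G))$ is always \emph{closed} in $\mathrm{CO}(G)$: by Corollary~\ref{LOG_zero} it is a single semiconjugacy class, and by Proposition~\ref{compact_invariant} every semiconjugacy class is compact, hence closed in the Hausdorff space $\mathrm{CO}(G)$. So the hypothesis $\iota(\mathrm{LO}(G))\cap\overline{\mathrm{CO}_g(G)}=\emptyset$ is equivalent to $\iota(\mathrm{LO}(G))$ being clopen, i.e.\ to the assertion that no left-ordering of $G$ is an accumulation point of genuine circular orderings. Hence it suffices to show: if $G$ is not a left-orderable monster, then some left-ordering of $G$ lies in $\overline{\mathrm{CO}_g(G)}$. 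As $G$ is finitely generated, hence countable, the standard correspondence between actions on $\mathbb{R}$ and left-orderings up to semiconjugacy lets us read ``$G$ is not a monster'' as: $G$ admits a left-ordering $<$ of type I or of type II, whose dynamical realization we denote $\rho_<$.

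The common mechanism is Proposition~\ref{genuine closure}: it is enough to produce a homomorphism $\psi\colon G\to K$ to a group $K$ carrying a left-ordering $<_K$ such that $K$ has a positive cofinal central element and $\psi(G)$ is cofinal in $K$. Granting this, $\ker\psi$ is a subgroup of the left-orderable group $G$, hence left-orderable; equipping it with any left-ordering and forming the lexicographic left-ordering $\prec$ of $G$ from $1\to\ker\psi\to G\to\psi(G)\to 1$ (with $\psi(G)$ given the restriction of $<_K$), Proposition~\ref{genuine closure} makes $c_\prec$ an accumulation point of $\mathrm{CO}_g(G)$; since $c_\prec=\iota(\prec)\in\iota(\mathrm{LO}(G))$, this contradicts clopenness. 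In the type I case the data is immediate: $\rho_<$ is semiconjugate to an action by translations, recorded by a homomorphism $\tau\colon G\to(\mathbb{R},+)$, and $\tau$ is nontrivial (otherwise the faithful action $\rho_<$ would be semiconjugate via a nondecreasing unbounded---hence proper---map to the trivial action, forcing that map to be constant on the cofinal distinguished orbit $\{\rho_<(g)(0):g\in G\}$, which is absurd). Taking $K=\tau(G)\le\mathbb{R}$ with the induced ordering, which is Archimedean, every nonzero element is central and cofinal, and $\psi=\tau$ maps onto $K$.

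In the type II case, $\rho_<$ is semiconjugate, via a nondecreasing map $h$, to some $\rho\colon G\to\widetilde{\mathrm{Homeo}_+}(S^1)$ whose image is minimal and locally contracting. Let $T\in\widetilde{\mathrm{Homeo}_+}(S^1)\subseteq\mathrm{Homeo}_+(\mathbb{R})$ be the central translation $x\mapsto x+1$, put $K=\rho(G)\cdot\langle T\rangle$ (so $T$ is central in $K$), and set $p_0=h(0)$; since minimality of the $S^1$-action prevents $\rho(G)$ from having a global fixed point, $h$ is unbounded above and below, hence proper. Equip $K$ with the left-ordering of Construction~\ref{standard_construction} using a well-ordering of $\mathbb{R}$ whose least element is $p_0$. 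Then $T$ is positive and cofinal in $K$: for $w\in K$ and $n$ large one has $(T^{-n}w)(p_0)=w(p_0)-n<p_0$, and whenever $T^{-n}w$ moves $p_0$ this point is automatically its $\prec$-least moved point, so $T^{-n}w<_K id$ and thus $w<_K T^n$, with the symmetric bound below. Finally $\psi=\rho$ has cofinal image: the $\rho(G)$-orbit of $p_0$ equals $h(\{\rho_<(g)(0):g\in G\})$, the nondecreasing image of the cofinal distinguished orbit of $\rho_<$, hence is cofinal in $\mathbb{R}$; since $|\widetilde{\mathrm{rot}}(\phi)-(\phi(p_0)-p_0)|<1$ for every $\phi\in\widetilde{\mathrm{Homeo}_+}(S^1)$, the translation number $\widetilde{\mathrm{rot}}$ is unbounded above on $\rho(G)$, so $r:=\widetilde{\mathrm{rot}}(\rho(g_0))>0$ for some $g_0\in G$. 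Using homogeneity of $\widetilde{\mathrm{rot}}$ together with the implication $\widetilde{\mathrm{rot}}(\phi)>n+1\Rightarrow\phi>_K T^n$ (then $T^{-n-1}\phi$ has positive translation number, hence no fixed point, hence moves $p_0$ strictly upward, so $T^{-n-1}\phi>_K id$), any $w\in K$---which lies below some $T^n$ by cofinality of $T$---is dominated by $\rho(g_0^N)$ once $Nr>n+1$, and the symmetric argument with $g_0^{-1}$ shows $\rho(G)$ is cofinal in $K$. The mechanism then applies, giving the desired contradiction; hence every left-ordering of $G$ is type III, i.e.\ $G$ is a left-orderable monster.

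I expect the main obstacle to be this last verification in the type II case---that $\rho(G)$ is cofinal in $\rho(G)\cdot\langle T\rangle$---which requires assembling the transport of the cofinal distinguished orbit through the semiconjugacy $h$, the elementary estimate relating displacement to translation number on $\widetilde{\mathrm{Homeo}_+}(S^1)$, the homogeneity of $\widetilde{\mathrm{rot}}$, and the comparison between translation-number size and order size in the Construction~\ref{standard_construction} ordering. Everything else---the closedness bookkeeping and the reductions via Corollary~\ref{LOG_zero}, Proposition~\ref{compact_invariant} and Proposition~\ref{genuine closure}---is routine.
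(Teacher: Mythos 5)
Your proof is correct and follows the same route as the paper: pass to the contrapositive, extract from a type I ordering a nontrivial homomorphism onto a subgroup of $(\mathbb{R},+)$ and from a type II ordering a homomorphism into $\widetilde{\mathrm{Homeo}_+}(S^1)$, and invoke Proposition \ref{genuine closure} to exhibit a secret left-ordering accumulated by genuine circular orderings. The only difference is one of detail: the paper's two-sentence proof leaves implicit the verification that the type II target (your $K=\rho(G)\cdot\langle T\rangle$) carries a left-ordering with a positive cofinal central element and that $\rho(G)$ is cofinal in it, which you carry out explicitly.
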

\begin{proof}
If $G$ admits a left-ordering of either type I or type II, then $G$ admits either a homomorphism onto a finitely-generated torsion-free abelian group, or a homomorphism into $\widetilde{\mathrm{Homeo}_+}(S^1)$.  In either case, the resulting left-ordering is an accumulation point of genuine circular orderings of $G$, by Proposition \ref{genuine closure}.
\end{proof}

\begin{question}
Does there exist a left-orderable monster admitting genuine circular orderings? If yes, is it the case that $\iota(\mathrm{LO}(G)) \cap \overline{\mathrm{CO}_g(G)} = \emptyset$? 
\end{question}

\end{document}